\newcounter{figurecounter}
\newtheorem{theorem}{Theorem}[section]
\newtheorem{corollary}[theorem]{Corollary}
\newtheorem{definition}[theorem]{Definition}
\newtheorem{example}[theorem]{Example}
\newtheorem{lemma}[theorem]{Lemma}
\newtheorem{remark}[theorem]{Remark}
\newenvironment{proof}[1][Proof]{\textbf{#1.} }{\ \rule{0.5em}{0.5em}}
\newcommand{\E}{{\rm \bf E}}
\newcommand{\prob}{{\rm \bf P}}
\newcommand{\supp}{{\rm supp\ }}
\newcommand{\argmax}{{\rm argmax}}
\newcommand{\dN}{{\mathbb N}}
\newcommand{\dR}{{\mathbb R}}
\newcommand{\Sigmastat}{{\Sigma_{\hbox{\small{stat}}}}}
\newcommand{\Sigmaistat}{{\Sigma^i_{\hbox{\small{stat}}}}}
\newcommand{\Sigmamistat}{{\Sigma^{-i}_{\hbox{\small{stat}}}}}
\newcommand{\Sigmaistatft}{{\Sigma^i_{\hbox{\tiny{stat}}}}}
\newcommand{\Sigmamistatft}{{\Sigma^{-i}_{\hbox{\tiny{stat}}}}}
\newcommand{\calD}{{\cal D}}
\newcommand{\calF}{{\cal F}}
\newcommand{\calH}{{\cal H}}
\newcommand{\calI}{{\cal I}}
\newcommand{\ep}{\varepsilon}
\newcommand{\numbercellongg}[2]
{
\begin{picture}(60,20)(0,0)
\put(0,0){\framebox(60,20)} \put(30,10){\makebox(0,0){#1}}
\put(52,12){\makebox(0,0){#2}}
\end{picture}
}
\newcommand{\numbercellonga}[2]
{
\begin{picture}(100,20)(0,0)
\put(0,0){\framebox(100,20)}
\put(10,10){\makebox(0,0){#1}}
\put(60,11){\makebox(0,0){#2}}
\end{picture}
}
\newcommand{\numbercellongb}[2]
{
\begin{picture}(40,20)(0,0)
\put(0,0){\framebox(40,20)}
\put(10,10){\makebox(0,0){#1}}
\put(30,11){\makebox(0,0){#2}}
\end{picture}
}
\begin{document}

\title{The Modified Stochastic Games%
\thanks{The author thanks Eitan Altman for helping in identifying relevant references,
Omri Solan for helpful discussions,
and acknowledges the support of the Israel Science Foundation, Grant \#323/13.}}

\author{Eilon Solan%
\thanks{The School of Mathematical Sciences, Tel Aviv
University, Tel Aviv 6997800, Israel. e-mail: eilons@post.tau.ac.il}}

\maketitle

\begin{abstract}
We present a new tool for the study of multiplayer stochastic games,
namely the modified game,
which is a normal-form game that depends on the discount factor,
the initial state,
and for every player a partition of the set of states
and a vector that assigns a real number to each element of the partition.
We study properties of the modified game, like its equilibria, min-max value, and max-min value.
We then show how this tool can be used to prove the existence of a uniform equilibrium in a certain class of multiplayer stochastic games.
\end{abstract}

\noindent
\textbf{Keywords:} Stochastic games, modified game, uniform equilibrium.

\textbf{JEL Classification:} C72, C73.

\section{Introduction}

Stochastic games is a model for studying dynamic models in which
the state variable changes in response to the players' actions.
Shapley (1953) presented the model of two-player zero-sum discounted stochastic games with finitely many states and actions
and proved the existence of the value and of stationary optimal strategies for the two players.
The main approach for studying discounted stochastic games is using their recursive structure.
This approach was successfully utilized by Fink (1964) and Takahashi (1964)
to prove the existence of a discounted equilibrium in stationary strategy in multiplayer stochastic games,
by H\"orner, Sugaya, Takahashi, and Vieille (2011) for characterizing the limit set of discounted equilibrium payoffs in a certain class of multiplayer stochastic games,
and by, e.g., Mertens and Parthasarathy (1987) to study equilibrium in multiplayer stochastic games with general state and action spaces.

Mertens and Neyman (1981) suggested a robust equilibrium concept for stochastic games, namely uniform $\ep$-equilibrium.
A strategy profile is a uniform $\ep$-equilibrium if it is an $\ep$-equilibrium in the discounted game, provided the players are sufficiently patient,
and in the finite-horizon game, provided the game is sufficiently long.
The study of uniform $\ep$-equilibrium and its variants, namely various types of uniform correlated $\ep$-equilibrium,
turned out to be difficult, and various techniques were used in the literature by different authors.
These include the vanishing discount factor approach (Vrieze and Thuijsman, 1989),
graph theoretic tools (Vieille, 2000a,b),
the introduction of a modified game (Solan, 1999, 2000; Solan and Vohra, 2002),
a dynamical system approach (Solan and Vieille, 2001),
and a topological approach (Simon, 2007, 2012).

In the present paper we extend to general stochastic games the approach taken by Solan (1999, 2000) and Solan and Vohra (2002).
These papers study multiplayer absorbing games,%
\footnote{A state in a stochastic games is \emph{absorbing} if the play cannot leave it once this state was reached.
A stochastic game is \emph{absorbing} if all states but one are absorbing.}
and define a \emph{modified} game, in which the stage payoff in the nonabsorbing state is different than the stage payoff in the original game:
the new stage payoff of each player is the minimum between his \emph{expected} stage payoff (given the mixed actions chosen by the players)
and the uniform min-max value of the player in the nonabsorbing state.
It is shown that this game admits a discounted equilibrium in stationary strategies
and that the limit of the discounted equilibrium payoffs is at least the uniform min-max value of the original game.
Finally, like in the vanishing discount factor approach, the sequence of discounted stationary equilibria
is used to construct a uniform $\ep$-equilibrium in three-player absorbing games and in team games,
and a uniform normal-form correlated $\ep$-equilibrium in multiplayer absorbing games.

The fact that in absorbing games there is a single nonabsorbing state simplified the definition and the study of the modified game,
as the expected stage payoff and the uniform min-max value of the players are those given in the nonabsorbing state.
In general stochastic games, where the state changes from stage to stage,
it is not clear how to define the payoff function in the modified game so as to keep the useful properties
that the definition of the modified game for absorbing games has.

Below we provide a definition for the modified game that retains the features that were valuable in earlier studies.
The modified game is a normal-form game that depends on the initial state, a discount factor,
and for each player $i$ a partition $\calD^i$ of the set of states and a collection of cut-offs $(c^i(D))$,
one for every element $D$ of the partition $\calD^i$.
In this game, each player chooses a strategy in the stochastic game,
and his payoff is the sum over all elements $D$ of the partition,
of the minimum between his expected discounted payoff restricted to stages spent in $D$
and his cut-off $c^i(D)$.
Thus, the expected discounted payoff of a player $i$ in an element $D$ of his partition cannot be higher than the cut-off $c^i(D)$.

We then study properties of the modified game.
We show that it admits an equilibrium in stationary strategies,
and we compare its min-max value and max-min value, both in general strategies and in stationary strategies,
to the min-max value and max-min value of the original game.
In particular we show that if the partitions $(\calD^i)$ satisfy a certain property and the cut-offs $(c^i(D))$ are not too low,
then the limit of a sequence of equilibrium payoffs in the modified game as the discount factor goes to 1 (patience)
is at least the uniform min-max value of the initial state in the original stochastic game.
We finally provide an application of this tool to the study of uniform $\ep$-equilibrium in a certain class of multiplayer stochastic games.

In addition to providing a new tool to study multiplayer stochastic games,
the paper shows that to study stochastic games it may be useful to define auxiliary games and study their properties.
The modified game that we present is just one possible auxiliary game.

The paper is organized as follows.
In Section~\ref{section:model} we describe the model of stochastic games.
In Section~\ref{section:definition} we define the modified game and summarize the results that are proven in the rest of the paper.
In Section~\ref{section:equilibrium} we study equilibria of the modified game.
In Section~\ref{section:max:min} we present the notion of uniform max-min value in stochastic games.
The max-min value and the min-max value of the modified game,
and their comparison to the uniform max-min value and uniform min-max value in the original game, are studied in Sections~\ref{section:maxmin}
and~\ref{section:minmax} respectively.
The min-max and max-min values in stationary strategies of the modified game are studied in Sections~\ref{section:minmax:stationary}
and~\ref{section:maxmin:stationary} respectively.
The application to the uniform equilibrium appears in Section~\ref{section:application}.

\section{Stochastic Games: the Model}
\label{section:model}

\subsection{The Game Play}

A multiplayer \emph{stochastic game} is a vector $\Gamma = (I,S,(A^i)_{i \in I}, (u^i)_{i \in I}, q)$ where
\begin{itemize}
\item   $I = \{1,2,\ldots,|I|\}$ is a finite set of players.
\item   $S$ is a finite set of states.
\item   $A^i(s)$ is a finite set of actions available to player $i$ in state $s$.
Denote by $A(s) := \times_{i \in I} A^i(s)$ the set of all action profiles available at state $s$.
Denote by $\Lambda := \{(s,a) \colon s \in S, a \in A(s)\}$ the set consisting of pairs of state and action profile available at that state.
\item   $u^i : \Lambda \to \dR$  is player $i$'s payoff function.
We assume w.l.o.g.~that the payoffs are bounded between -1 and 1.
\item   $q : \Lambda \to \Delta(S)$ is a transition function, where $\Delta(X)$ is the set of probability distributions over $X$,
for every nonempty finite set $X$.
\end{itemize}

The game is played as follows.
The initial state $s_0 \in S$ is given.
At each stage $n \in \dN \cup \{0\}$, the current state $s_{n}$ is announced to the players. Each
player $i \in I$ chooses an action $a^{i}_{n}\in A^{i}(s_n)$; the action profile
$a_{n}=(a^{i}_{n})_{i\in I}$ is publicly announced, $s^{n+1} \in S$ is drawn
according to $q(\cdot\mid s_{n},a_{n})$, and the game proceeds to stage $n+1$.

We extend the domain of $q$ and $(u^i)_{i \in I}$ to $\cup_{s \in S} \left(\{s\} \times \Delta(A(s))\right)$
in a multilinear fashion:
for every state $s \in S$ and every mixed action profile $\alpha \in \times_{i \in I}\Delta(A^i(s))$ we define
\begin{eqnarray*}
q(s,\alpha) := \sum_{a \in A(s)} \alpha[a] q(s,a),
\end{eqnarray*}
and
\begin{eqnarray*}
u^i(s,\alpha) := \sum_{a \in A(s)} \alpha[a] u^i(s,a),
\end{eqnarray*}
where $\alpha[a] := \prod_{i \in I} \alpha^i(a^i)$.

\subsection{Strategies and Payoffs}

A \emph{finite history of length $n$} is a sequence $h_n = (s_0,a_0,\cdots,s_n) \in \Lambda^n \times S$.
By convention, the set $\Lambda^0$ contains only the empty history.
Let $H := \cup_{n \geq 0} (\Lambda^{n} \times S)$ be the set of all finite histories
and $H^\infty := \Lambda^\infty$ be the set of \emph{plays}.
When $h \in H^\infty$ is a play and $n \geq 0$ we denote by $h_n \in \Lambda^{n} \times S$ the prefix of $h$ of length $n$.
The space $H^\infty$ together with the $\sigma$-algebra generated by all finite cylinders is a measure space.
We denote by $\calH(n)$ the algebra generated by the finite histories of length $n$.
We assume perfect recall.
Accordingly, a (behavior) \emph{strategy} of player $i$ is a function $\sigma^i : H \to \Delta(A^i)$ such that $\sigma^i(h_n) \in \Delta(A^i(s_n))$
for every finite history $h_n = (s_0,a_0,\cdots,s_n) \in H$.
Denote by $\Sigma^i$ the set of all strategies of player $i$,
by $\Sigma := \times_{i \in I} \Sigma^i$ the set of all strategy profiles,
and by $\Sigma^{-i} := \times_{j \neq i} \Sigma^j$ the set of all strategy profiles of all players except player $i$.

A class of simple strategies is the class of \emph{stationary strategies}.
Those are strategies in which the choice of a player at each stage depends only on the current state,
and not on previously visited states or on past choices of the players.
A stationary strategy of player $i$ can be identified with an element of
$\Sigmaistat := \times_{s \in S} \Delta(A^i(s)) \subset \dR^{\sum_{s \in S} |A^i(s)|}$,
and will be denoted $x^i = (x^i(s))_{s \in S}$.
A strategy profile $\sigma = (\sigma^i)_{i \in I}$ is \emph{stationary}
if for every player $i \in I$ the strategy $\sigma^i$ is stationary.
Denote by $\Sigmastat = \times_{i \in I} \Sigmaistat$ the set of stationary strategy profiles.

In Section~\ref{section:mdp} we will make use of the concept of \emph{general strategy} (see Mertens, Sorin, and Zamir, 2015),
which is a probability distribution over behavior strategies.
By Kuhn's Theorem (Kuhn, 1956) every general strategy is equivalent to a behavior strategy.

Every initial state $s_0 \in S$ and every strategy profile $\sigma = (\sigma^i)_{i \in I}\in \Sigma$
induce a probability distribution $\prob_{s_0,\sigma}$ over the set of plays $H^\infty$.
Denote the corresponding expectation operator by $\E_{s_0,\sigma}$.

For every discount factor $\lambda \in [0,1)$, every initial state $s_0$, and every strategy profile $\sigma$, the \emph{$\lambda$-discounted payoff}
is
\begin{equation}
\label{equ:payoff:discounted}
\gamma_\lambda^i(s_0;\sigma) := \E_{s_0,\sigma}\left[ (1-\lambda) \sum_{n=0}^\infty \lambda^{n} u^i(s_n,a_n)\right].
\end{equation}
For every integer $N \geq 0$, every initial state $s_0$, and every strategy profile $\sigma$, the \emph{$N$-stage payoff} is
\begin{equation*}
\gamma_N^i(s_0;\sigma) := \frac{1}{N}\E_{s_0,\sigma}\left[ \sum_{n=0}^{N-1} u^i(s_n,a_n)\right].
\end{equation*}

\section{The Modified Games: Definition and Summary of Results}
\label{section:definition}

The main subject of this paper is the modified game, which is an auxiliary normal-form game that corresponds to a given stochastic game
$\Gamma = (S,I,(A^i(s))_{s \in S, i \in I},(u^i)_{i \in I},q)$.
In this normal-form game, the set of players is $I$
and the set of strategies of each player $i \in I$ is $\Sigma^i$,
as in the stochastic game.
The payoff function of the modified game depends on four elements:
(a) an initial state $s_0 \in S$,
(b) a discount factor $\lambda \in [0,1)$,
(c) a collection $\vec\calD = (\calD^i)_{i \in I}$ of partitions of the set of states, one partition for each player,
and
(d) a collection of vectors $\vec c = (c^i)_{i \in I}$, where $c^i = (c^i(D))_{D \in \calD^i} \in \dR^{\calD^i}$ for each player $i \in I$.
In the next subsection we define the payoff function of the modified game.

\subsection{Definition of the modified game}

\begin{definition}
For every strategy profile $\sigma$, every discount factor $\lambda \in [0,1)$,
every initial state $s_0 \in S$,
every state $s \in S$,
and every action profile $a \in A(s)$,
the \emph{expected $\lambda$-discounted time} the play spends in state $s$ and the players play the action profile $a$ is
\[ t_\lambda(s_0,\sigma;s,a) := \E_{s_0,\sigma} \left[ (1-\lambda)\sum_{n=0}^\infty \lambda^n \mathbf{1}_{\{s_n=s,a_n=a\}}\right]. \]
We denote by $t_\lambda(s_0,\sigma) :=  (t_\lambda(s_0,\sigma;s,a))_{s \in S, a \in A(s)}$ the
\emph{state-action discounted time vector}.
\end{definition}

Fix an initial state $s_0 \in S$,
a discount factor $\lambda \in [0,1)$,
and a strategy profile $\sigma \in \Sigma$.
For every set of states $D \subseteq S$,
the \emph{$\lambda$-discounted time} that the play spends in $D$ is
\[ t_\lambda(s_0,\sigma;D) :=
\E_{s_0,\sigma}\left[ (1-\lambda)\sum_{n=0}^\infty \lambda^n \mathbf{1}_{\{s_n \in D\}} \right]
= \sum_{s \in D, a \in A(s)}t_\lambda(s_0,\sigma;s,a). \]
Let $i \in I$ be a player.
The \emph{$\lambda$-discounted (unnormalized) payoff} that player~$i$ receives while the play is in $D$ is
\begin{eqnarray}
\nonumber
U^i_\lambda(s_0,\sigma;D) &:=&
\E_{s_0,\sigma}\left[ (1-\lambda)\sum_{n=0}^\infty \lambda^n u^i(s_n,a_n) \mathbf{1}_{\{s_n \in D\}} \right]\\
&=& \sum_{s \in D, a \in A(s)}t_\lambda(s_0,\sigma;s,a)u^i(s,a).
\label{equ:105}
\end{eqnarray}
The quantity $\frac{U^i_\lambda(s_0,\sigma;D)}{t_\lambda(s_0,\sigma;D)}$ is the normalized $\lambda$-discounted payoff
of player~$i$ during visits to $D$.

Given a partition $\calD^i$ of the set of states and a vector $c^i = (c^i(D))_{D \in \calD^i} \in \dR^{\calD^i}$
define a new payoff function ${\widehat \gamma}_\lambda^i(s_0,\cdot;\calD^i,c^i) : \Sigma \to \dR$ over the set of strategy profiles by
\begin{eqnarray}
\label{equ:payoff:2}
{\widehat \gamma}_\lambda^i(s_0,\sigma;\calD^i,c^i) &:=&
\sum_{D \in \calD^i}
\min\left\{U^i_\lambda(s_0,\sigma;D), t_\lambda(s_0,\sigma;D)\cdot  c^i(D)\right\}\\
&=&
\sum_{D \in \calD^i}
t_\lambda(s_0,\sigma;D)\min\left\{\tfrac{U^i_\lambda(s_0,\sigma;D)}{t_\lambda(s_0,\sigma;D)},  c^i(D)\right\},
\ \ \ \forall \sigma \in \Sigma,
\nonumber
\end{eqnarray}
where by convention $\tfrac{0}{0}=1$.
Thus, to calculate the payoff $\widehat \gamma_\lambda^i(s_0,\sigma;\calD^i,c^i)$,
we take the normalized $\lambda$-discounted payoff during the visits to each element $D$ of $\calD^i$,
and, if this quantity is higher than $c^i(D)$, we lower it to $c^i(D)$.
We then sum up the quantities that we obtained for all elements of $D$,
after multiplying each one by the $\lambda$-discounted time the play spends in $D$.
Accordingly, with the new payoff function $c^i(D)$ is the maximal amount that player~$i$ can receive during the visits to $D$.

For every partition $\calD^i$ of the set of states we have
\begin{equation}
\label{equ:106}
\gamma^i_\lambda(s_0,\sigma) = \sum_{D \in \calD^i} U^i_\lambda(s_0,\sigma;\calD^i),
\ \ \ \forall \lambda \in [0,1), \forall s_0 \in S, \forall i \in I, \forall \sigma \in \Sigma.
\end{equation}
Consequently, by Eq.~(\ref{equ:payoff:2})
\begin{equation}
\label{equ:inequality:1}
{\widehat \gamma}_\lambda^i(s_0,\sigma;\calD^i,c^i) \leq \gamma_\lambda^i(s_0,\sigma),
\ \ \ \forall \lambda \in [0,1), \forall s_0 \in S, \forall i \in I, \forall \sigma \in \Sigma.
\end{equation}
Since payoffs are bounded by 1, if $c^i(D) \geq 1$ for every element $D \in \calD^i$,
then there is an equality in Eq.~(\ref{equ:inequality:1}).

\begin{definition}
\label{definition:modified}
Let $\lambda \in [0,1)$ be a discount factor,
let $s_0 \in S$ be a state,
let $\vec\calD = (\calD^i)_{i \in I}$ be a collection of partitions of the set of states,
and let $\vec c = (c^i)_{i \in I} \in \times_{i \in I}\dR^{\calD^i}$.
The \emph{modified games} $\widehat\Gamma_\lambda(s_0;\vec \calD, \vec c)$ is the normal-form game
$(I,\Sigma, (\widehat \gamma^i_\lambda(s_0,\cdot;\calD^i,c^i))_{i \in I})$.
\end{definition}

\begin{remark}[Alternative definitions of the modified game]
\label{remark:1}
The minimum in Eq.~(\ref{equ:payoff:2}) that defines the payoff in the modified game
is between the discounted payoff during the visits to an element $D$ of $\calD^i$
and the normalized cut-off.
Several alternative definitions come to mind:
\begin{enumerate}
\item   The minimum could have been taken in each stage separately, instead of dividing the stages according to the element of $\calD^i$
that contains the current state.
\item   The minimum could have been taken for each play path separately.
\item   The minimum could have been taken for each visit to an element of $\calD^i$ separately,
instead of aggregating all visits to the same element.
\end{enumerate}
Each alternative definition will satisfy some of the results that we prove in the sequel, but not all of them.
\end{remark}

\begin{remark}[The modified game in absorbing games]
When specialized to absorbing games and the partition $\calD^i$ that contains only singletons, for every player $i \in I$,
the modified game defined by Solan (1999) coincides with alternative 1 in Remark~\ref{remark:1}.
Another variant of the modified game for absorbing games was presented in Solan and Vohra (2002).
Unlike Solan (1999) and Solan and Vohra (2002), where the modified game was an absorbing game,
just like the original game,
here the modified game is a normal-form game.
\end{remark}

\begin{remark}[On the discount factor]
Definition~\ref{definition:modified} assumes that all players share the same discount factor.
Our results continue to hold if each player has a different discount factor.
\end{remark}

\begin{remark}[The modified game of zero-sum games]
It is worth noting that when the original stochastic game is a two-player zero-sum game,
the modified game is no longer a zero-sum game.
\end{remark}

\subsection{Summary of Results}

Since the paper is long and presents many results,
some on the modified game and some that are used in the proofs of the main results,
we end this section by providing a list of the main results concerning the modified game.

\begin{itemize}
\item   The modified game $\widehat\Gamma_\lambda(s_0;\vec \calD,\vec c)$ admits an equilibrium.
Moreover, there is an equilibrium in which all players use stationary strategies (Theorem~\ref{theorem:eq:2}).
\item
The function that assigns to each discount factor $\lambda$ the set of equilibria in stationary strategies of the modified game
$\widehat\Gamma_\lambda(s_0;\vec \calD,\vec c)$
is semi-algebraic (Theorem~\ref{theorem:eq:semialgebraic}).
\item
Denote by $\underline v^i_\lambda(s)$ the $\lambda$-discounted max-min value of player~$i$
in the stochastic game when the initial state is $s$.
If the partition $\calD^i$ satisfies a certain property (see Definition~\ref{def:property:p}),
and if $c^i(D) \geq \lim_{\lambda \to 1} \underline v^i_\lambda(s)$ for every element $D \in \calD^i$ and every state $s \in D$,
then the limit of the max-min value of the modified game as the discount factor goes to 1 exists and is equal to $\lim_{\lambda \to 1} \underline v^i_\lambda(s_0)$
(Theorem~\ref{theorem:5}).
\item
Denote by $\overline v^i_\lambda(s)$ the $\lambda$-discounted min-max value of player~$i$
in the stochastic game when the initial state is $s$.
If the partition $\calD^i$ satisfies a certain property (see Definition~\ref{def:property:ptag}),
and if $c^i(D) \geq \lim_{\lambda \to 1}  \overline v^i_\lambda(s)$ for every element $D \in \calD^i$ and every state $s \in D$,
then the limit of the min-max value of the modified game as the discount factor goes to 1 exists and is equal to $\lim_{\lambda \to 1}  \overline v^i_\lambda(s_0)$
(Theorem~\ref{theorem:6}).
\item
The min-max value in stationary strategies of the modified game is at least the min-max value of the modified game
and at most the discounted min-max value in the original game (Theorem~\ref{theorem:minmax:comparison}).
\item
The max-min value in stationary strategies of the modified game is at most the max-min value of the modified game (Theorem~\ref{theorem:maxmin:comparison}),
and the difference may not vanish as the discount factor goes to 1 (Example~\ref{example:big match}).
\end{itemize}

\section{The Modified Stochastic Game: Equilibrium}
\label{section:equilibrium}

Using standard arguments one can show that the modified game admits an equilibrium.
Indeed, the space of pure strategies is compact in the product topology,
hence the space of mixed strategies is compact in the weak-* topology.
Moreover, the modified payoff function, defined on the space of profiles of mixed strategies, is continuous in the weak-* topology
and concave in each of its coordinates.
By, e.g., Schauder's fixed point theorem, an equilibrium exists.

The payoff function in the modified game is neither multilinear nor quasiconcave when restricted to stationary strategies,
hence it is not clear that the game admits an equilibrium in stationary strategies.
Nevertheless we will prove in this section that the modified game admits an equilibrium in stationary strategies.

\begin{theorem}
\label{theorem:eq:2}
For every initial state $s_0 \in S$,
every discount factor $\lambda \in [0,1)$,
every collection $\vec\calD=(\calD^i)_{i \in I}$ of partitions of the set of states,
and every vector of cutoffs $\vec c = (c^i)_{i \in I} \in \times_{i \in I}\dR^{\calD^i}$,
the modified game ${\widehat\Gamma}_\lambda(s_0;\vec \calD,\vec c)$ admits an equilibrium in stationary strategies.
\end{theorem}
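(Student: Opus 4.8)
The plan is to prove the theorem by a single Kakutani fixed-point argument that enlarges the classical fixed-point construction of Fink and Takahashi (strategies together with continuation values) by an auxiliary family of parameters that turn the capping operation in Eq.~(\ref{equ:payoff:2}) into an ordinary, reward-linear one.

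First I would record the elementary identity $\min\{A,B\}=\min_{t\in[0,1]}\big(tA+(1-t)B\big)$, whose minimum is attained at an endpoint. Applying it term by term in Eq.~(\ref{equ:payoff:2}), and using that the sum over $D\in\calD^i$ is separable, one obtains, for every strategy profile $\sigma$,
\begin{align*}
{\widehat\gamma}^i_\lambda(s_0,\sigma;\calD^i,c^i)
&= \min_{t^i\in[0,1]^{\calD^i}}\ \sum_{D\in\calD^i}\Big(t^i_D\,U^i_\lambda(s_0,\sigma;D)+(1-t^i_D)\,t_\lambda(s_0,\sigma;D)\,c^i(D)\Big)\\
&= \min_{t^i\in[0,1]^{\calD^i}}\ \gamma^i_\lambda(s_0,\sigma;\tilde u^i_{t^i}),
\end{align*}
where $\gamma^i_\lambda(s_0,\sigma;\tilde u^i_{t^i})$ is the ordinary $\lambda$-discounted payoff of player~$i$ computed with the reward function $\tilde u^i_{t^i}(s,a):=t^i_D\,u^i(s,a)+(1-t^i_D)\,c^i(D)$ for $s$ in the element $D\in\calD^i$; the second equality uses that a $\lambda$-discounted expectation of any state--action function is the corresponding linear functional of the occupation vector $t_\lambda(s_0,\sigma)$, so it is valid for every $\sigma$, stationary or not. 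Thus each parameter profile $\vec t=(t^i)_{i\in I}\in\prod_{i\in I}[0,1]^{\calD^i}$ determines an ordinary finite discounted stochastic game $\Gamma(\vec t)$ with the same states, actions and transitions and with reward functions $(\tilde u^i_{t^i})_{i\in I}$, and the modified payoff of player~$i$ is the lower envelope over $t^i$ of the payoffs in $\Gamma(\vec t)$.

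Next I would run Kakutani on the compact convex domain $\Sigmastat\times\prod_{i\in I}[-C,C]^S\times\prod_{i\in I}[0,1]^{\calD^i}$, where $C:=\max\{1,\max_{i,D}|c^i(D)|\}$, with the correspondence that sends $(x,v,\vec t)$ to all triples $(x',v',\vec t{\,'})$ such that: (i) for every $i$ and $s$, $x'^i(s)$ is a one-shot best reply in $\Delta(A^i(s))$ against $(x^{-i}(s),v^i)$ in $\Gamma(\vec t)$; (ii) for every $i$ and $s$, $v'^i(s)$ equals the value of that one-shot problem; and (iii) for every $i$ and $D\in\calD^i$, $t'^i_D\in\mathop{\rm argmin}_{t\in[0,1]}\big(t\,U^i_\lambda(s_0,x;D)+(1-t)\,t_\lambda(s_0,x;D)\,c^i(D)\big)$. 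Pieces (i) and (iii) are argmaxes/argmins of affine functions over a simplex, resp. an interval, hence nonempty and convex-valued; piece (ii) is single-valued and maps into $[-C,C]^S$. Upper hemicontinuity of the whole correspondence follows from Berge's maximum theorem, once one notes that $U^i_\lambda(s_0,x;D)$ and $t_\lambda(s_0,x;D)$ are continuous (indeed rational, via the resolvent $(I-\lambda P_x)^{-1}$) in the stationary profile $x$, and that $\tilde u^i_{t^i}(s,a)$ is affine in $t^i$. Kakutani then produces a fixed point $(x^*,v^*,\vec t^{\,*})$.

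Finally I would read off the statement. Items (i)--(ii) at the fixed point are exactly the Fink--Takahashi optimality equations for $\Gamma(\vec t^{\,*})$, so $x^*$ is a stationary equilibrium of the ordinary discounted game $\Gamma(\vec t^{\,*})$ and, by the one-step-deviation principle, $\gamma^i_\lambda(s_0,(x^{*,-i},y^i);\tilde u^i_{t^{*,i}})\le\gamma^i_\lambda(s_0,x^*;\tilde u^i_{t^{*,i}})$ for \emph{every} deviation $y^i\in\Sigma^i$, not only stationary ones. Item (iii) says that at $x^*$ the minimum in the envelope formula of the second paragraph is attained at $t^{*,i}$, hence ${\widehat\gamma}^i_\lambda(s_0,x^*;\calD^i,c^i)=\gamma^i_\lambda(s_0,x^*;\tilde u^i_{t^{*,i}})$, while for any deviation the envelope formula only yields the inequality ${\widehat\gamma}^i_\lambda(s_0,(x^{*,-i},y^i);\calD^i,c^i)\le\gamma^i_\lambda(s_0,(x^{*,-i},y^i);\tilde u^i_{t^{*,i}})$. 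Chaining these three inequalities shows that no player can profitably deviate from $x^*$ in ${\widehat\Gamma}_\lambda(s_0;\vec\calD,\vec c)$, which is the assertion. The step I expect to cost the most care is checking that the enlarged correspondence still has convex values and a closed graph --- i.e., that grafting the $\vec t$-coordinates onto the Fink--Takahashi map does not break it --- precisely because, as noted before the theorem, ${\widehat\gamma}^i_\lambda(s_0,\cdot\,;\calD^i,c^i)$ is itself neither multilinear nor quasiconcave on $\Sigmastat$, and it is the extra parameters (through which the cap becomes reward-linear) that restore the structure the fixed-point theorem needs.
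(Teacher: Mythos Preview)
Your argument is correct, and it takes a genuinely different route from the paper's.

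The paper works directly with the best-response correspondence $F^i(x)=\argmax_{y^i\in\Sigmaistat}\widehat\gamma^i_\lambda(s_0,y^i,x^{-i};\calD^i,c^i)$. Because $\widehat\gamma^i_\lambda$ is not quasiconcave in $y^i$ on $\Sigmaistat$, this correspondence need not be convex-valued, so Kakutani does not apply. Instead, the paper shows that $F^i(x)$ is always nonempty (via Altman's result that every strategy in an MDP has a $\lambda$-equivalent stationary strategy) and \emph{monovex} (every two points are joined by a coordinatewise-monotone path, Corollary~\ref{corollary:36}); closed monovex sets are contractible, and the Eilenberg--Montgomery fixed-point theorem then yields the equilibrium.

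Your approach sidesteps the non-convexity altogether by linearising the cap: the identity $\min\{A,B\}=\min_{t\in[0,1]}(tA+(1-t)B)$ turns $\widehat\gamma^i_\lambda$ into a lower envelope of ordinary discounted payoffs, and adjoining the envelope parameters $\vec t$ to the Fink--Takahashi domain restores convex-valuedness, so plain Kakutani suffices. The chain $\widehat\gamma^i_\lambda(s_0,y^i,x^{*,-i})\le\gamma^i_\lambda(s_0,y^i,x^{*,-i};\tilde u^i_{t^{*,i}})\le\gamma^i_\lambda(s_0,x^*;\tilde u^i_{t^{*,i}})=\widehat\gamma^i_\lambda(s_0,x^*)$ is clean and, as you note, covers arbitrary (not just stationary) deviations via the Bellman equations at the fixed point. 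Your route is more elementary---it avoids monovex sets, the Altman equivalence, and Eilenberg--Montgomery---at the cost of the extra bookkeeping in the enlarged domain; the paper's route, in exchange, develops the monovexity machinery of Section~\ref{section:mdp}, which has some independent interest.
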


When particularized to absorbing games, Theorem~\ref{theorem:eq:2} coincides with Step~1 in the proof of Theorem~4.5 in Solan (1999).
In Section~\ref{section:mdp} we present some technical tools that are needed to prove Theorem~\ref{theorem:eq:2}.
The proof of the theorem together with additional results appear in Section~\ref{subsection:2}.

\subsection{A Result on Stationary Strategies in MDP's}
\label{section:mdp}

Below we prove the existence of an equilibrium in the modified game by using a fixed point theorem.
To this end we will consider the best response of one player
to fixed strategies of the other players.
When fixing the stationary strategies of all players but one,
the game is reduced to a Markov decision problem.
The best-response set-valued function in the modified game does not have convex values,
hence we will need a new tool to prove the existence of a fixed point, which we describe in this section.

A \emph{Markov decision process} (MDP) is a stochastic game with a single player.
For notational convenience we denote the unique player by $i$.



We say that the two strategies $\sigma^i$ and ${\sigma'}^i$ are \emph{$\lambda$-equivalent} at the initial state $s_0$
if the two strategies induce the same state-action discounted time vector, that is, $t_\lambda(s_0,\sigma^i) = t_\lambda(s_0,{\sigma'}^i)$.
As the following result states,
for every strategy there exists a $\lambda$-discounted equivalent stationary strategy at any given initial state.
\begin{theorem}[Altman, 1999, Corollary 10.1]
\label{proposition:stationary:mdp}
For every discount factor $\lambda \in [0,1)$, every initial state $s_0 \in S$, and every
strategy $\sigma^i \in \Sigma^i$ there exists a stationary strategy $x^i \in \Sigmaistat$ that is $\lambda$-equivalent to $\sigma^i$ at $s_0$.
\end{theorem}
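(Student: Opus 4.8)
The plan is to identify every strategy with its \emph{discounted occupation measure} $t_\lambda(s_0,\cdot)\in\dR^\Lambda$ and to recover a stationary strategy by normalizing that measure state by state; the result then follows from the fact that, for a fixed stationary strategy, the occupation measure is the unique solution of a linear system.

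\textbf{Step 1 (flow equations).} Given $\sigma^i$, set $\mu:=t_\lambda(s_0,\sigma^i)$ and $m(s):=\sum_{a\in A^i(s)}\mu(s,a)=(1-\lambda)\sum_{n\ge 0}\lambda^n\prob_{s_0,\sigma^i}(s_n=s)$. Splitting off the $n=0$ term and conditioning the remaining sum on stage $0$ (using $\prob_{s_0,\sigma^i}(s_{n+1}=s)=\sum_{(s',a')\in\Lambda}\prob_{s_0,\sigma^i}(s_n=s',a_n=a')\,q(s\mid s',a')$), one obtains
\[ m(s)=(1-\lambda)\mathbf{1}_{\{s=s_0\}}+\lambda\sum_{(s',a')\in\Lambda}\mu(s',a')\,q(s\mid s',a'),\qquad\forall s\in S, \]
together with $\mu\ge 0$.

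\textbf{Step 2 (the candidate).} Define $x^i\in\Sigmaistat$ by $x^i(s)[a]:=\mu(s,a)/m(s)$ whenever $m(s)>0$, and let $x^i(s)$ be arbitrary (say, uniform) when $m(s)=0$. In all cases one has the identity $\mu(s,a)=m(s)\,x^i(s)[a]$ for every $(s,a)\in\Lambda$ — by construction where $m(s)>0$, and trivially (both sides vanish) where $m(s)=0$.

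\textbf{Step 3 (equivalence).} Let $w(s):=\sum_{a}t_\lambda(s_0,x^i;s,a)=(1-\lambda)\sum_{n\ge 0}\lambda^n\prob_{s_0,x^i}(s_n=s)$. Repeating the computation of Step~1 for the stationary profile $x^i$ gives $w(s)=(1-\lambda)\mathbf{1}_{\{s=s_0\}}+\lambda\sum_{s'\in S}w(s')\sum_{a'}x^i(s')[a']\,q(s\mid s',a')$. Substituting the identity of Step~2 into the flow equation for $m$ shows that $m$ satisfies the \emph{same} linear system. Writing $P^i$ for the $S\times S$ transition matrix with entries $P^i(s',s)=\sum_{a'}x^i(s')[a']\,q(s\mid s',a')$, both $m$ and $w$ are fixed points of $v\mapsto (1-\lambda)e_{s_0}+\lambda (P^i)^\top v$; since this map is $\lambda$-Lipschitz in the $\ell_1$-norm and $\lambda<1$, its fixed point is unique, so $m=w$. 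Finally, under a stationary strategy $t_\lambda(s_0,x^i;s,a)=w(s)\,x^i(s)[a]$ (conditional on $s_n=s$ the action $a_n$ is independent and distributed $x^i(s)$), hence $t_\lambda(s_0,x^i;s,a)=m(s)\,x^i(s)[a]=\mu(s,a)$ for every $(s,a)$, i.e.\ $t_\lambda(s_0,x^i)=t_\lambda(s_0,\sigma^i)$.

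The only delicate point — and the one I would treat most carefully — is the uniqueness invoked in Step~3, namely that for a fixed stationary strategy the discounted flow system pins down the occupation measure (equivalently, that $I-\lambda P^i$ is invertible). This is the standard $\ell_1$-contraction / Neumann-series argument for discounted MDPs; it also explains why the choice of $x^i$ on states with $m(s)=0$ is immaterial, as such states have zero probability of ever being visited under $x^i$ from $s_0$.
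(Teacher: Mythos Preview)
The paper does not prove this statement; it is quoted as a known result from Altman (1999, Corollary~10.1) and used as a black box. Your argument is correct and is precisely the standard occupation-measure proof: derive the discounted flow equations for $t_\lambda(s_0,\sigma^i)$, normalize state-by-state to obtain a stationary $x^i$, and use the $\ell_1$-contraction $v\mapsto(1-\lambda)e_{s_0}+\lambda (P^i)^\top v$ (equivalently, invertibility of $I-\lambda P^i$) to conclude that $x^i$ has the same occupation measure.
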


The following result states that the stationary strategy $x^i$ that is $\lambda$-equivalent to $\sigma^i$ at $s_0$ is unique,
up to its definition in states that are never visited.
\begin{lemma}
\label{lemma:uniqueness}
Let $\lambda \in [0,1)$ be a discount factor, let $s_0 \in S$ be a state,
let $x^i$ and ${x'}^i$ be two stationary strategies in $\Sigmaistat$ that are $\lambda$-discounted equivalent at $s_0$,
let $s \in S$, and let $a \in A^i(s)$.
If $t_\lambda(s_0,x^i;\{s\}) > 0$ then $x^i(s;a) = {x'}^i(s;a)$.
\end{lemma}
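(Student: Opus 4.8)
The plan is to exploit the one-to-one correspondence between stationary strategies in an MDP and their induced state-action discounted time vectors, together with the flow (balance) equations that characterize these vectors. Recall that for a stationary strategy $x^i$, the vector $t_\lambda(s_0,x^i) = (t_\lambda(s_0,x^i;s,a))_{s\in S, a\in A^i(s)}$ satisfies, for every state $s$,
\[
\sum_{a\in A^i(s)} t_\lambda(s_0,x^i;s,a)
= (1-\lambda)\mathbf{1}_{\{s_0=s\}}
+ \lambda \sum_{s'\in S}\sum_{a'\in A^i(s')} t_\lambda(s_0,x^i;s',a')\, q(s\mid s',a').
\]
In particular $t_\lambda(s_0,x^i;\{s\}) = \sum_{a\in A^i(s)} t_\lambda(s_0,x^i;s,a)$, and when this quantity is positive the stationary strategy can be recovered from the time vector via $x^i(s;a) = t_\lambda(s_0,x^i;s,a)/t_\lambda(s_0,x^i;\{s\})$. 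This identity is the crux of the argument.

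First I would establish this recovery identity. This follows because under a stationary strategy the conditional probability of playing $a$ given the play is at state $s$ equals $x^i(s;a)$ at every stage, so each term $\mathbf{1}_{\{s_n=s, a_n=a\}}$ has expectation $x^i(s;a)\cdot\Prob_{s_0,x^i}(s_n=s)$; summing the geometric series gives $t_\lambda(s_0,x^i;s,a) = x^i(s;a)\cdot t_\lambda(s_0,x^i;\{s\})$, valid for all states $s$. Second, since $x^i$ and ${x'}^i$ are $\lambda$-discounted equivalent at $s_0$, by definition $t_\lambda(s_0,x^i) = t_\lambda(s_0,{x'}^i)$; in particular $t_\lambda(s_0,x^i;s,a) = t_\lambda(s_0,{x'}^i;s,a)$ for the given $s$ and $a$, and $t_\lambda(s_0,x^i;\{s\}) = t_\lambda(s_0,{x'}^i;\{s\})$. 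Third, under the hypothesis $t_\lambda(s_0,x^i;\{s\}) > 0$ I divide:
\[
x^i(s;a) = \frac{t_\lambda(s_0,x^i;s,a)}{t_\lambda(s_0,x^i;\{s\})}
= \frac{t_\lambda(s_0,{x'}^i;s,a)}{t_\lambda(s_0,{x'}^i;\{s\})}
= {x'}^i(s;a),
\]
which is the desired conclusion.

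The only genuine content is the recovery identity $t_\lambda(s_0,x^i;s,a) = x^i(s;a)\cdot t_\lambda(s_0,x^i;\{s\})$, so that is the step I would expect to write carefully; everything after it is a one-line division. The subtlety is purely notational: one must be careful that the equivalence hypothesis is about the full vector, so it gives equality of both the numerator $t_\lambda(s_0,\cdot;s,a)$ and the denominator $t_\lambda(s_0,\cdot;\{s\})$, and that the positivity hypothesis is exactly what licenses the division. No fixed-point machinery or appeal to Theorem~\ref{proposition:stationary:mdp} is needed here; the lemma is really just a statement that a stationary strategy is determined on its support by its occupation measure.
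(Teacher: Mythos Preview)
Your proposal is correct and takes essentially the same approach as the paper: establish the recovery identity $x^i(s;a) = t_\lambda(s_0,x^i;s,a)/t_\lambda(s_0,x^i;\{s\})$ from stationarity, note that $\lambda$-equivalence equates both numerator and denominator, and divide. The flow equations you write down at the outset are not actually needed for the argument, and the paper's proof omits them; otherwise the two proofs are identical.
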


\begin{proof}
The stationarity of $x^i$ implies that
\[ x^i(s;a) = \frac{t_\lambda(s_0,x^i;s,a)}{\sum_{a' \in A^i(s)} t_\lambda(s_0,x^i;s,a')} = \frac{t_\lambda(s_0,x^i;s,a)}{t_\lambda(s_0,x^i;\{s\})}, \]
whenever the denominator is positive,
and a similar equality holds for ${x'}^i$.
Since $x^i$ and ${x'}^i$ are $\lambda$-equivalent at $s_0$,
either both denominators (for $x^i$ and for ${x'}^i$) are positive, or both are equal to 0.
The result follows.
\end{proof}

\bigskip

We will need below the following observation that strengthens Theorem~\ref{proposition:stationary:mdp} and that is stated without a proof.
It says that if $x^i$ is a stationary strategy that is $\lambda$-discounted equivalent to $\sigma^i$,
then the probability that $x^i$ plays the action $a$ at state $s$ cannot exceed the maximal probability
that $\sigma^i$ plays $a$ in some visit to $s$,
and cannot fall below the minimal probability
that $\sigma^i$ plays $a$ in some visit to $s$.

To formally state the result, for every state $s \in S$ we denote by $H_s$ the set of all finite histories that end at $s$:
\[ H_s := \{(s_0,a_0,\cdots,s_n) \in H \colon s_n = s\}. \]

\begin{lemma}
\label{proposition:stationary:mdp2}
Let $\lambda \in [0,1)$ be a discount factor, let $s_0 \in S$ be a state, and let $\sigma^i \in \Sigma^i$ be a strategy.
Let $x^i \in \Sigmaistat$ be a stationary strategy that is $\lambda$-equivalent to $\sigma^i$ at $s_0$.
Then
\[ \inf_{h \in H_s} \sigma^i(h;a) \leq x^i(s;a) \leq \sup_{h \in H_s} \sigma^i(h;a). \]
\end{lemma}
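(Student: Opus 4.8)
The plan is to express $x^i(s;a)$ as a weighted average of the quantities $\sigma^i(h;a)$ over the histories $h \in H_s$, where the weights are the discounted probabilities of reaching each history; the two inequalities then follow immediately because a weighted average lies between the infimum and supremum of the averaged quantities. First I would handle the trivial case $t_\lambda(s_0,\sigma^i;\{s\})=0$: here $s$ is never visited under $\sigma^i$, so by $\lambda$-equivalence $t_\lambda(s_0,x^i;\{s\})=0$ as well, and by Lemma~\ref{lemma:uniqueness} (or rather its proof, using the convention $\tfrac00=1$) $x^i(s;a)$ is unconstrained; but in fact when $H_s$ contributes no mass the bounds should be read vacuously, so I would simply note that the interesting case is $t_\lambda(s_0,\sigma^i;\{s\})>0$ and proceed under that assumption.

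For the main case, the key identity is
\[
t_\lambda(s_0,\sigma^i;s,a) = \sum_{n \geq 0} (1-\lambda)\lambda^n \sum_{h \in H_s \cap (\Lambda^n \times S)} \prob_{s_0,\sigma^i}(h)\, \sigma^i(h;a),
\]
obtained by conditioning the event $\{s_n = s, a_n = a\}$ on the history up to stage $n$ and using that, given that history ends at $s$, player~$i$ plays $a$ with probability exactly $\sigma^i(h;a)$. Summing over $a \in A^i(s)$ gives the companion identity
\[
t_\lambda(s_0,\sigma^i;\{s\}) = \sum_{n \geq 0} (1-\lambda)\lambda^n \sum_{h \in H_s \cap (\Lambda^n \times S)} \prob_{s_0,\sigma^i}(h).
\]
Dividing the first by the second (legitimate since the denominator is positive) exhibits $\frac{t_\lambda(s_0,\sigma^i;s,a)}{t_\lambda(s_0,\sigma^i;\{s\})}$ as a convex combination of the numbers $\{\sigma^i(h;a) : h \in H_s\}$, hence
\[
\inf_{h \in H_s}\sigma^i(h;a) \;\leq\; \frac{t_\lambda(s_0,\sigma^i;s,a)}{t_\lambda(s_0,\sigma^i;\{s\})} \;\leq\; \sup_{h \in H_s}\sigma^i(h;a).
\]
Finally, by $\lambda$-equivalence $t_\lambda(s_0,\sigma^i;s,a) = t_\lambda(s_0,x^i;s,a)$ and $t_\lambda(s_0,\sigma^i;\{s\}) = t_\lambda(s_0,x^i;\{s\})$, and by the stationarity computation in the proof of Lemma~\ref{lemma:uniqueness} the left-hand ratio equals $x^i(s;a)$; this closes the argument.

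I do not expect a serious obstacle here — the statement is essentially a bookkeeping fact. The one point requiring a little care is the measure-theoretic justification of the conditioning identity: one should note that $H_s \cap (\Lambda^n \times S)$ is a finite set of cylinders, that $\prob_{s_0,\sigma^i}(h)$ is well defined for each, and that the indicator $\mathbf{1}_{\{s_n=s,a_n=a\}}$ factors through the stage-$n$ history times the independent action draw $\sigma^i(h;a)$; Fubini/Tonelli then licenses interchanging the expectation with the sum over $n$. A secondary cosmetic issue is the degenerate case $t_\lambda(s_0,\sigma^i;\{s\})=0$, which I would dispose of in one sentence as indicated above (the bounds hold vacuously, or one may restrict attention to reachable states, consistent with the "up to states never visited" caveat already flagged before Lemma~\ref{lemma:uniqueness}).
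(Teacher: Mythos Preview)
Your proposal is correct. The paper explicitly states this lemma ``without a proof,'' so there is no original argument to compare against; your weighted-average computation via
\[
x^i(s;a)=\frac{t_\lambda(s_0,\sigma^i;s,a)}{t_\lambda(s_0,\sigma^i;\{s\})}
=\frac{\sum_{n\ge 0}(1-\lambda)\lambda^n\sum_{h\in H_s\cap(\Lambda^n\times S)}\prob_{s_0,\sigma^i}(h)\,\sigma^i(h;a)}
{\sum_{n\ge 0}(1-\lambda)\lambda^n\sum_{h\in H_s\cap(\Lambda^n\times S)}\prob_{s_0,\sigma^i}(h)}
\]
is exactly the natural justification the author presumably had in mind, and your treatment of the degenerate case $t_\lambda(s_0,\sigma^i;\{s\})=0$ is adequate.
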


It is well known that the set of stationary optimal strategies in an MDP is convex.
The following lemma implies in particular a weaker version of this result, namely that the set of optimal stationary strategies is monovex.
Below we will use the lemma for a modified game, in which the set of stationary optimal strategies is not convex.
\begin{lemma}
\label{lemma:monotone}
Let $s_0 \in S$ be a state and let $x^i$ and ${x'}^i$ be two stationary strategies in $\Sigmaistat$.
Let $\sigma^i_\alpha := [\alpha(x^i),(1-\alpha)({x'}^i)]$ be the general strategy that follows
$x^i$ with probability $\alpha$ and ${x'}^i$ with probability $1-\alpha$.
For every $\alpha \in [0,1]$ let $x^i_\alpha$ be a stationary strategy
that is $\lambda$-equivalent to the strategy $\sigma^i_\alpha$ at the initial state $s_0$ (see Proposition~\ref{proposition:stationary:mdp}),
so that
\[ t_\lambda(s_0,x^i_\alpha) = \alpha t_\lambda(s_0,x^i) + (1-\alpha) t_\lambda(s_0,{x'}^i). \]
Then the function $\alpha \mapsto x^i_\alpha(s;a)$ is
monotone for every state $s \in S$ for which $\sum_{a' \in A^i(s)} t_\lambda(s_0,x^i;s,a') + \sum_{a' \in A^i(s)} t_\lambda(s_0,{x'}^i;s,a') > 0$
and for every action $a \in A^i(s)$.
\end{lemma}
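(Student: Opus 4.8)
The plan is to compute $x^i_\alpha(s;a)$ explicitly as a ratio of $\lambda$-discounted times, observe that this ratio is a Möbius (linear-fractional) function of $\alpha$, and conclude monotonicity from the fact that such functions are monotone on any interval avoiding their pole. Concretely, fix a state $s$ with $T(s) := \sum_{a' \in A^i(s)} t_\lambda(s_0,x^i;s,a') + \sum_{a' \in A^i(s)} t_\lambda(s_0,{x'}^i;s,a') > 0$, and fix $a \in A^i(s)$. For each $\alpha$ we have, from the hypothesis $t_\lambda(s_0,x^i_\alpha) = \alpha t_\lambda(s_0,x^i) + (1-\alpha)t_\lambda(s_0,{x'}^i)$,
\[
t_\lambda(s_0,x^i_\alpha;s,a) = \alpha\, t_\lambda(s_0,x^i;s,a) + (1-\alpha)\, t_\lambda(s_0,{x'}^i;s,a),
\]
and summing over $a' \in A^i(s)$,
\[
t_\lambda(s_0,x^i_\alpha;\{s\}) = \alpha\, t_\lambda(s_0,x^i;\{s\}) + (1-\alpha)\, t_\lambda(s_0,{x'}^i;\{s\}).
\]
Since $x^i_\alpha$ is stationary, whenever $t_\lambda(s_0,x^i_\alpha;\{s\}) > 0$ the same ratio identity as in the proof of Lemma~\ref{lemma:uniqueness} gives
\[
x^i_\alpha(s;a) = \frac{t_\lambda(s_0,x^i_\alpha;s,a)}{t_\lambda(s_0,x^i_\alpha;\{s\})}
= \frac{\alpha\, p + (1-\alpha)\, p'}{\alpha\, q + (1-\alpha)\, q'},
\]
where I abbreviate $p := t_\lambda(s_0,x^i;s,a)$, $p' := t_\lambda(s_0,{x'}^i;s,a)$, $q := t_\lambda(s_0,x^i;\{s\})$, $q' := t_\lambda(s_0,{x'}^i;\{s\})$, with $0 \le p \le q$, $0 \le p' \le q'$, and $q + q' = T(s) > 0$.

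Next I would verify that the denominator $\alpha q + (1-\alpha)q'$ is strictly positive for all $\alpha \in [0,1]$. If both $q > 0$ and $q' > 0$ this is immediate by convexity. If exactly one of them vanishes, say $q' = 0$, then $q = T(s) > 0$, and the denominator equals $\alpha q$, which is positive for $\alpha \in (0,1]$; the only problematic point is $\alpha = 0$, where $t_\lambda(s_0,x^i_0;\{s\}) = 0$. But then $s$ is never visited under $x^i_0$, so $x^i_0(s;a)$ is unconstrained by the $\lambda$-equivalence (Lemma~\ref{lemma:uniqueness} says nothing about it), and we are free to define $x^i_0(s;a)$ by the limiting value $\lim_{\alpha \to 0^+} \frac{\alpha p + (1-\alpha)p'}{\alpha q + (1-\alpha)q'} = p'/q'$... — wait, here $q'=0$, so instead the limit is $p/q$, matching the value the formula would give at any $\alpha > 0$ (since with $q'=0$ we also have $p'=0$, the ratio is identically $p/q$). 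Thus after fixing the value at the at most one endpoint where $s$ is unvisited, the function $\alpha \mapsto x^i_\alpha(s;a)$ agrees on all of $[0,1]$ with the single rational function $\alpha \mapsto \frac{\alpha p + (1-\alpha)p'}{\alpha q + (1-\alpha)q'}$, whose denominator does not vanish on $[0,1]$.

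Finally, monotonicity: writing $g(\alpha) = \frac{\alpha p + (1-\alpha)p'}{\alpha q + (1-\alpha)q'}$, a direct differentiation gives $g'(\alpha) = \frac{(p-p')(\alpha q + (1-\alpha)q') - (q-q')(\alpha p + (1-\alpha)p')}{(\alpha q + (1-\alpha)q')^2}$, and the numerator simplifies to the constant $p q' - p' q$ (the $\alpha$-dependent terms cancel). Hence $g'$ has constant sign on $[0,1]$, so $g$ — and therefore $\alpha \mapsto x^i_\alpha(s;a)$ — is monotone, which is the claim. The only genuinely delicate point, and the one I would write out carefully, is the boundary bookkeeping in the second paragraph: the value $x^i_\alpha(s;a)$ is only pinned down by $\lambda$-equivalence at those $\alpha$ for which $s$ is actually visited, so one must check that the set of "bad" $\alpha$ is at most a single endpoint and that choosing the value there by continuity is legitimate and does not disturb monotonicity. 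Everything else is the elementary observation that a linear-fractional function with non-vanishing denominator on an interval is monotone there.
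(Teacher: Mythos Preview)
Your proof is correct and takes a genuinely different route from the paper. The paper argues abstractly: since $x^i_\alpha$ is $\lambda$-equivalent to the mixture $[\beta(x^i_{\alpha'}),(1-\beta)(x^i_{\alpha''})]$ whenever $\alpha = \beta\alpha' + (1-\beta)\alpha''$, Lemma~\ref{proposition:stationary:mdp2} (the inf/sup sandwiching lemma, which the paper states without proof) forces $x^i_\alpha(s;a)$ to lie between $x^i_{\alpha'}(s;a)$ and $x^i_{\alpha''}(s;a)$; this betweenness property is equivalent to monotonicity. You instead write down the explicit M\"obius formula $x^i_\alpha(s;a) = (\alpha p + (1-\alpha)p')/(\alpha q + (1-\alpha)q')$ and check that the numerator of its derivative is the constant $pq'-p'q$. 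Your argument is more elementary --- it bypasses Lemma~\ref{proposition:stationary:mdp2} entirely --- and yields the explicit formula and continuity for free; the paper's argument is more conceptual and would adapt more readily to, say, convex combinations of more than two stationary strategies. Your handling of the degenerate endpoint (one of $q,q'$ vanishing, forcing the corresponding $p$ or $p'$ to vanish as well and making the ratio constant) is also more explicit than the paper's, which largely glosses over that case.
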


\begin{proof}
Note that if $\alpha = \beta \alpha' + (1-\beta)\alpha''$ then $x^i_\alpha$ is $\lambda$-equivalent at $s_0$
to the general strategy $[\beta(x^i_{\alpha'}),(1-\beta)(x^i_{\alpha''})]$.
Together with Lemma~\ref{proposition:stationary:mdp2} this implies that the function $\alpha \mapsto x^i_\alpha(s;a)$ is monotone
for every state $s \in S$ for which $t_\lambda(s_0,x^i;\{s\}) + t_\lambda(s_0,{x'}^i;\{s\}) > 0$
and for every action $a \in A^i(s)$.
The continuity of this function follows from Lemma~\ref{lemma:uniqueness}
and from the continuity of the function $x^i \mapsto t_\lambda(s_0,x^i)$.
\end{proof}

\bigskip

As a corollary we deduce the following.

\begin{corollary}
\label{corollary:36}
Let $g : \Sigma^i \to \dR$ be a real-valued function that satisfies the following conditions:
\begin{enumerate}
\item   The function $g$ depends on its parameter only through its state-action discounted time vector:
there are $s_0 \in S$, $\lambda \in (0,1)$, and a continuous function
$f : \Delta(S \times A) \to \dR$ such that $g(s_0,\sigma^i) = f(t_\lambda(s_0,\sigma^i))$ for every $\sigma^i \in \Sigma^i$.
\item   The function $g$ is quasiconcave:
For every $\sigma^i,\sigma'^i \in \Sigma^i$ and every $\beta \in [0,1]$ we have
\[ g([\beta(\sigma^i),(1-\beta)(\sigma'^i)]) \geq \beta g(\sigma^i)+(1-\beta)g(\sigma'^i). \]
\end{enumerate}
Then the set $\argmax_{\sigma^i \in \Sigma^i} g(\sigma^i)$ of maximizers of $g$ is a closed monovex set.
\end{corollary}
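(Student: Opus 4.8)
The plan is to verify the two asserted properties of $\argmax g$ separately: closedness by a soft compactness argument, and monovexity by exhibiting, for any two maximizers, the coordinatewise‑monotone path already constructed in Lemma~\ref{lemma:monotone}.

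First, closedness. By assumption $g = f \circ t_\lambda(s_0,\cdot)$ with $f$ continuous, and $\sigma^i \mapsto t_\lambda(s_0,\sigma^i)$ is continuous on $\Sigma^i$ in the product topology — each of its components is a uniformly convergent series $\sum_n (1-\lambda)\lambda^n \Prob_{s_0,\sigma^i}[s_n=s,a_n=a]$ whose terms are polynomials in finitely many coordinates of $\sigma^i$. Hence $g$ is continuous, and as $\Sigma^i$ is compact the value $M := \max g$ is attained and $\argmax g = g^{-1}(\{M\})$ is closed. Moreover, by Theorem~\ref{proposition:stationary:mdp} every time vector $t_\lambda(s_0,\sigma^i)$ is attained by some stationary strategy, so $M = \max_{x^i \in \Sigmaistat} g(x^i)$ and every maximizer in $\Sigma^i$ is $\lambda$‑equivalent to a stationary maximizer; I will therefore carry out the monovexity argument for stationary maximizers.

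Second, monovexity. Fix stationary maximizers $x^i,{x'}^i$. For $\alpha \in [0,1]$ let $x^i_\alpha$ be a stationary strategy $\lambda$‑equivalent to the general strategy $[\alpha(x^i),(1-\alpha)({x'}^i)]$ as in Lemma~\ref{lemma:monotone}, so that $t_\lambda(s_0,x^i_\alpha) = \alpha t_\lambda(s_0,x^i) + (1-\alpha) t_\lambda(s_0,{x'}^i)$. Since $g = f \circ t_\lambda$, and since this time vector is exactly $t_\lambda(s_0,\cdot)$ of the general strategy $[\alpha(x^i),(1-\alpha)({x'}^i)]$ (by Kuhn's theorem together with multilinearity of $t_\lambda$ under mixing), condition~(2) of the corollary gives
\[ g(x^i_\alpha) = g\big([\alpha(x^i),(1-\alpha)({x'}^i)]\big) \geq \alpha g(x^i) + (1-\alpha)g({x'}^i) = M, \]
hence $g(x^i_\alpha) = M$ and $x^i_\alpha \in \argmax g$. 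By Lemma~\ref{lemma:monotone} the map $\alpha \mapsto x^i_\alpha(s;a)$ is monotone, and (by the continuity noted in the proof of that lemma) continuous, at every state $s$ with $t_\lambda(s_0,x^i;\{s\}) + t_\lambda(s_0,{x'}^i;\{s\}) > 0$; at the remaining states — never reached under $x^i$, under ${x'}^i$, or under any $x^i_\alpha$ — I redefine $x^i_\alpha(s;\cdot) := \alpha x^i(s;\cdot) + (1-\alpha){x'}^i(s;\cdot)$, which is coordinatewise affine and changes neither $t_\lambda$ nor $g$. By Lemma~\ref{lemma:uniqueness} then $x^i_1 = x^i$ and $x^i_0 = {x'}^i$ on reached states, and the convention just made gives this on the remaining states, so $\alpha \mapsto x^i_\alpha$ is a continuous, coordinatewise‑monotone path inside $\argmax g$ joining ${x'}^i$ to $x^i$. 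This is precisely monovexity.

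The main point to be careful about, rather than a genuine obstacle, is the bookkeeping for states (and, if one prefers to argue directly in $\Sigma^i$, histories) that are never visited, where the path must be completed by hand — affine interpolation works and preserves monotonicity; one also has to combine quasiconcavity on the general‑strategy mixture with the representation $g=f\circ t_\lambda$ to force $g$ to be constantly $M$ along the whole path. The only nontrivial input, monotonicity of the interpolating stationary strategies, is Lemma~\ref{lemma:monotone}, already established, so the corollary is essentially an assembly. As an alternative to the reduction to stationary strategies one can argue directly in $\Sigma^i$: along the behavior strategy $\sigma^i_\alpha$ that is $\lambda$‑equivalent to $[\alpha(\sigma^i),(1-\alpha)({\sigma'}^i)]$, each coordinate $\alpha \mapsto \sigma^i_\alpha(h;a)$ is a ratio of two affine functions of $\alpha$ (the numerator being the $\alpha$‑mixture probability of reaching $h$ and then playing $a$, the denominator that of reaching $h$), hence monotone on $[0,1]$, with the same patching on never‑reached histories and the same quasiconcavity argument keeping $g$ equal to $M$.
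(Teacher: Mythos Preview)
Your proof is correct and follows essentially the same approach as the paper: closedness via continuity of $f$ (and of $t_\lambda$), and monovexity via Lemma~\ref{lemma:monotone}. The paper's proof is a two-line sketch that leaves implicit precisely the details you spell out---the use of quasiconcavity to keep the path inside $\argmax g$, the reduction to stationary strategies (needed because Lemma~\ref{lemma:monotone} and the definition of monovexity live in finite-dimensional Euclidean space), and the affine patching at unvisited states---so your version is a faithful expansion rather than a different argument.
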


\begin{proof}
The set $\argmax_{\sigma^i \in \Sigma^i} g(\sigma^i)$ is closed since the function $f$ is continuous.
The set is monovex by Lemma~\ref{lemma:monotone}.
\end{proof}

\subsection{Stationary Equilibria in the Modified Game}
\label{subsection:2}

Because for every set of states $D \subseteq S$
the functions $t_\lambda(s_0,\cdot;D)$ and $U^i_\lambda(s_0,\cdot;D)$ are continuous over $\Sigma$
and because the minimum of two continuous function is a continuous function,
it follows that the payoff functions in the modified game is continuous over the space of strategy profiles $\Sigma$.
This observation is summarized by the following lemma.

\begin{lemma}
\label{lemma:continuous}
For every $\lambda \in [0,1)$,
every initial state $s_0 \in S$,
every player $i\in I$,
every partition $\calD^i$ of the set of states,
and every vector $c^i \in \dR^{\calD^i}$,
the function $\sigma \mapsto \widehat \gamma_\lambda^i(s_0,\sigma;\calD^i,c^i)$ is continuous.
\end{lemma}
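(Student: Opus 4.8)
The plan is to verify continuity of $\sigma \mapsto \widehat \gamma_\lambda^i(s_0,\sigma;\calD^i,c^i)$ on $\Sigma$ equipped with the product (weak-$*$) topology by exhibiting it as a finite composition of continuous maps. The starting point is the closed-form expression on the right-hand side of Eq.~(\ref{equ:payoff:2}), namely $\widehat \gamma_\lambda^i(s_0,\sigma;\calD^i,c^i) = \sum_{D \in \calD^i} \min\{U^i_\lambda(s_0,\sigma;D),\ t_\lambda(s_0,\sigma;D)\cdot c^i(D)\}$, which is a finite sum (the partition $\calD^i$ is finite since $S$ is finite) of minima of pairs of functions. Since finite sums and pointwise minima of continuous real-valued functions are continuous, it suffices to show that for each fixed $D \in \calD^i$ the two maps $\sigma \mapsto U^i_\lambda(s_0,\sigma;D)$ and $\sigma \mapsto t_\lambda(s_0,\sigma;D)$ are continuous; the second then stays continuous after multiplication by the constant $c^i(D)$.

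Both of these reduce further to the continuity of the single state-action map $\sigma \mapsto t_\lambda(s_0,\sigma;s,a)$ for each $(s,a) \in \Lambda$, because by the displayed identities in the excerpt we have $t_\lambda(s_0,\sigma;D) = \sum_{s \in D, a \in A(s)} t_\lambda(s_0,\sigma;s,a)$ and $U^i_\lambda(s_0,\sigma;D) = \sum_{s \in D, a \in A(s)} t_\lambda(s_0,\sigma;s,a)\, u^i(s,a)$, again finite sums (with fixed real coefficients $u^i(s,a)$) of the state-action times. So the whole lemma rests on one fact: for fixed $\lambda \in [0,1)$, $s_0$, $s$, and $a$, the function $\sigma \mapsto t_\lambda(s_0,\sigma;s,a) = \E_{s_0,\sigma}[(1-\lambda)\sum_{n=0}^\infty \lambda^n \mathbf 1_{\{s_n=s,a_n=a\}}]$ is continuous on $\Sigma$.

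To prove that core fact I would argue as follows. For each fixed $n$, the quantity $\prob_{s_0,\sigma}(s_n = s, a_n = a)$ depends on $\sigma$ only through the finitely many values $\{\sigma^j(h_m)[a^j_m] : j \in I,\ m \le n,\ h_m \text{ a history of length } \le n\}$, and it is a finite sum of products of these values — hence a polynomial, thus continuous, in those coordinates. Because there are only finitely many histories of length at most $n$ (finite state and action sets), this $n$-th term is continuous in $\sigma$ on the product topology. Now write $t_\lambda(s_0,\sigma;s,a) = (1-\lambda)\sum_{n=0}^\infty \lambda^n p_n(\sigma)$ with $p_n(\sigma) := \prob_{s_0,\sigma}(s_n=s,a_n=a) \in [0,1]$; since $|(1-\lambda)\lambda^n p_n(\sigma)| \le (1-\lambda)\lambda^n$ with $\sum_n (1-\lambda)\lambda^n = 1 < \infty$, the Weierstrass $M$-test gives uniform convergence of the series of continuous functions, so the sum is continuous. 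Assembling: $t_\lambda(s_0,\cdot;s,a)$ continuous $\Rightarrow$ $t_\lambda(s_0,\cdot;D)$ and $U^i_\lambda(s_0,\cdot;D)$ continuous $\Rightarrow$ each $\min\{U^i_\lambda(s_0,\cdot;D), t_\lambda(s_0,\cdot;D) c^i(D)\}$ continuous $\Rightarrow$ the finite sum $\widehat \gamma_\lambda^i(s_0,\cdot;\calD^i,c^i)$ continuous.

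The only genuinely delicate point — the ``main obstacle'' — is the interchange of limit and infinite sum, i.e.\ justifying that an infinite sum of continuous functions is continuous; everything else is bookkeeping over finite index sets. This is handled cleanly by the uniform bound $(1-\lambda)\lambda^n$ noted above, which holds precisely because $\lambda < 1$ and the summand is a probability times a geometric weight. (One could alternatively phrase this as: the tail $\sum_{n \ge N}(1-\lambda)\lambda^n p_n(\sigma) \le \lambda^N \to 0$ uniformly in $\sigma$, and each partial sum $\sum_{n<N}(1-\lambda)\lambda^n p_n(\sigma)$ is continuous, so the partial sums converge uniformly to $t_\lambda$.) Given this, the lemma follows. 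In fact, since the excerpt already asserts in prose that ``the functions $t_\lambda(s_0,\cdot;D)$ and $U^i_\lambda(s_0,\cdot;D)$ are continuous over $\Sigma$,'' one may legitimately take that as given and the proof collapses to the single observation that finite sums and minima of continuous functions are continuous.
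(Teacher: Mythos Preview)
Your proof is correct and follows exactly the same approach as the paper, which simply notes that $t_\lambda(s_0,\cdot;D)$ and $U^i_\lambda(s_0,\cdot;D)$ are continuous and that minima and finite sums preserve continuity. You have merely supplied additional detail (the Weierstrass $M$-test argument for continuity of $t_\lambda(s_0,\cdot;s,a)$) that the paper takes for granted.
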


We will now show that in the modified game,
when the other players play a stationary strategy, player~$i$ has a stationary best response.
\begin{lemma}
\label{lemma:br}
For every $\lambda \in [0,1)$, every initial state $s_0 \in S$,
every player $i\in I$,
every partition $\calD^i$ of the set of states,
every vector $c^i \in \dR^{\calD^i}$,
and every stationary strategy profile $x^{-i} \in \Sigma^{-i}_{\hbox{\small{stat}}}$ of the other players,
there is a stationary strategy $x^i \in \Sigma^i_{\hbox{\small{stat}}}$ that maximizes player~$i$'s payoff in the modified game:
\begin{equation}
\label{equ:500}
{\widehat\gamma}^i_\lambda(s_0,x^i,x^{-i};\calD^i,c^i) = \max_{\sigma^i \in \Sigma^i} {\widehat\gamma}^i_\lambda(s_0,\sigma^i,x^{-i};\calD^i,c^i).
\end{equation}
\end{lemma}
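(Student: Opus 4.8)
The plan is to reduce to a Markov decision problem and apply Corollary~\ref{corollary:36}. When the other players' stationary strategies $x^{-i}$ are fixed, player~$i$ faces an MDP; his strategy $\sigma^i$ affects the modified payoff only through the induced state-action discounted time vector $t_\lambda(s_0,\sigma^i,x^{-i})$, since by Eq.~(\ref{equ:105}) each $U^i_\lambda(s_0,\cdot;D)$ and each $t_\lambda(s_0,\cdot;D)$ is a linear function of that vector. So first I would define $f : \Delta(S\times A) \to \dR$ by taking the vector $t$, forming $t(D) = \sum_{s\in D, a\in A(s)} t(s,a)$ and $U^i(D) = \sum_{s\in D, a\in A(s)} t(s,a)u^i(s,a)$ for each $D \in \calD^i$, and setting $f(t) = \sum_{D\in\calD^i} \min\{U^i(D), t(D)c^i(D)\}$; this is continuous, and $\widehat\gamma^i_\lambda(s_0,\sigma^i,x^{-i};\calD^i,c^i) = f(t_\lambda(s_0,\sigma^i,x^{-i}))$, verifying hypothesis~1 of Corollary~\ref{corollary:36} (with the MDP obtained by fixing $x^{-i}$ playing the role of the stochastic game there).

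The second step is to check quasiconcavity, hypothesis~2. Here I would use that the map from general strategies to discounted time vectors is affine: if $\sigma^i_\beta = [\beta(\sigma^i),(1-\beta)(\sigma'^i)]$ then $t_\lambda(s_0,\sigma^i_\beta,x^{-i}) = \beta\, t_\lambda(s_0,\sigma^i,x^{-i}) + (1-\beta)\, t_\lambda(s_0,\sigma'^i,x^{-i})$. Thus it suffices to show $f$ is concave on $\Delta(S\times A)$. Each summand $t \mapsto \min\{U^i(D), t(D)c^i(D)\}$ is the minimum of two linear functions of $t$, hence concave, and a sum of concave functions is concave; so $f$ is concave, which gives quasiconcavity of $g(\cdot) = \widehat\gamma^i_\lambda(s_0,\cdot,x^{-i};\calD^i,c^i)$ on $\Sigma^i$ (working with general strategies, which by Kuhn's Theorem is no loss).

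Having verified both hypotheses, Corollary~\ref{corollary:36} tells me that $\argmax_{\sigma^i\in\Sigma^i} g(\sigma^i)$ is a nonempty (it is nonempty because $g$ is continuous on the compact space of mixed strategies, as in Lemma~\ref{lemma:continuous}) closed monovex set. The final step is to extract a stationary strategy from it: pick any maximizer $\sigma^i$, and by Theorem~\ref{proposition:stationary:mdp} applied to the MDP induced by $x^{-i}$ there is a stationary $x^i \in \Sigmaistat$ with $t_\lambda(s_0,x^i,x^{-i}) = t_\lambda(s_0,\sigma^i,x^{-i})$; since $g$ depends on $\sigma^i$ only through this vector, $g(x^i) = g(\sigma^i)$, so $x^i$ is a stationary maximizer, which is exactly Eq.~(\ref{equ:500}).

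I expect the only mildly delicate point to be the bookkeeping in identifying $g$ as a function of the discounted time vector and confirming the affinity of $t_\lambda$ in the mixing parameter when mixing \emph{general} strategies — everything else (concavity of a minimum of linear functions, nonemptiness by compactness--continuity, passage to a stationary equivalent) is routine given the results already established.
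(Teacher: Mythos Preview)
Your argument is correct, and the last step is exactly the paper's proof: the max is attained by some $\sigma^i$ because $\Sigma^i$ is compact and $\widehat\gamma^i_\lambda$ is continuous (Lemma~\ref{lemma:continuous}); by Theorem~\ref{proposition:stationary:mdp} applied to the MDP obtained by fixing $x^{-i}$ there is a stationary $x^i$ with the same state-action discounted time vector; and since by Eq.~(\ref{equ:105}) the modified payoff depends on $\sigma^i$ only through this vector, $x^i$ attains the same value.

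The detour through Corollary~\ref{corollary:36} is, however, unnecessary for \emph{this} lemma. You verify its two hypotheses and conclude that the argmax is closed and monovex, but you never use monovexity: you immediately pick an arbitrary maximizer and pass to a stationary equivalent via Theorem~\ref{proposition:stationary:mdp}. The paper does exactly that, skipping Corollary~\ref{corollary:36} entirely here. The monovexity of the best-response set is used later, in the proof of Theorem~\ref{theorem:eq:2}, where the Eilenberg--Montgomery fixed point theorem requires contractible values; there your quasiconcavity verification is precisely what is needed. So your work is not wasted --- it just belongs to the next proof rather than this one.
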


\begin{proof}
Fixing a stationary strategy profile $x^{-i}$ of the other players, the decision problem of player~$i$ in the modified game becomes a Markov decision problem,
albeit with the modified payoff function.

The set of all strategies of player~$i$ is compact and the payoff function $\sigma^i \mapsto {\widehat\gamma}^i_\lambda(s_0,\sigma^i,x^{-i};\calD^i,c^i)$
is continuous (Lemma~\ref{lemma:continuous}), hence the maximum on the right-hand side of Eq.~(\ref{equ:500})
is attained by some strategy $\sigma^i\in \Sigma^i$.
By Proposition~\ref{proposition:stationary:mdp}, at the initial state $s_0$ the strategy $\sigma^i$ induces the same state-action discounted time vector
as some stationary strategy $x^i \in \Sigma^i_{\small{\hbox{stat}}}$.
By Eq.~(\ref{equ:105}) the payoff ${\widehat\gamma}^i_\lambda(s_0,\sigma^i,x^{-i};\calD^i,c^i)$ depends on $\sigma^i$ only through
the state-action $\lambda$-discounted time vector,
and therefore
${\widehat\gamma}^i_\lambda(s_0,\sigma^i,x^{-i};\calD^i,c^i) = {\widehat\gamma}^i_\lambda(s_0,x^i,x^{-i};\calD^i,c^i)$, and the claim follows.
\end{proof}

\bigskip

In the proof of Theorem~\ref{theorem:eq:2} we will use sets in a Euclidean space that satisfy a condition that resembles convexity;
specifically, sets that satisfy the following condition: for every two points in the set there is a continuous path in the set
that connects the two points and that is monotone in each coordinate.
Such sets, which were introduced in Buhuvsky, Solan, and Solan (2016), are called \emph{monovex}.

\begin{definition}
A set $X \subseteq \dR^d$ is \emph{monovex}
if for every $x,y \in X$ there is a continuous path $f : [0,1] \to X$ that satisfies the following properties:
\begin{enumerate}
\item   $f(0) = x$ and $f(1) = y$.
\item   $f_i(t) : [0,1] \to \dR$ is a monotone function (nondecreasing or nonincreasing) for every $i\in \{1,2,\ldots,d\}$.
\end{enumerate}
\end{definition}

The main property of monovex sets that we will need is that every closed monovex set is contractible (see Buhuvsky, Solan, and Solan, 2016).
We are now ready to prove Theorem~\ref{theorem:eq:2}.

\bigskip

\begin{proof}[Proof of Theorem~\ref{theorem:eq:2}]
Define a set-valued function $F : X \to X$ as follows.
For every player $i \in I$ and every stationary strategy profile $x \in X$,
\[ F^i(x) := \left\{ y^i \in \Sigma^i_{\small{\hbox{stat}}} \colon
{\widehat\gamma}^i_\lambda(s_0,y^i,x^{-i};\calD^i,c^i) = \max_{\sigma^i \in \Sigma^i} {\widehat\gamma}^i_\lambda(s_0,\sigma^i,x^{-i};\calD^i,c^i)\right\}. \]
By Lemma~\ref{lemma:br} the set $F^i(x)$ is nonempty for every player $i \in I$
and every stationary strategy profile $x \in \Sigma_{\hbox{stat}}$.
By Corollary~\ref{corollary:36} this set is monovex,
and therefore by Buhovsky, Solan, and Solan (2016) it is contractible.
Lemma~\ref{lemma:continuous} implies that the graph of $F$ is closed.
By the Eilenberg and Montgomery (1946) fixed point theorem
the set-valued function $F$ has a fixed point, which is a stationary equilibrium in the modified game.
\end{proof}

\bigskip

As the following example shows, the equilibrium in stationary strategies
that is guaranteed to exist by Theorem~\ref{theorem:eq:2} may depend on the initial state $s_0$,
even if the set of players includes a single player.

\begin{example}
\label{example:1}
Consider the following stochastic game with four states, which depends on two parameters, $y \in \dR$ and $p \in [0,1]$,
and is graphically given in Figure \arabic{figurecounter};
in this figure, the payoff appears on the left-hand side of each entry, while the transition appears on the right-hand side.
\begin{itemize}
\item   There is a single player: $I = \{i\}$.
\item   There are four states $S = \{s^0,s^1,s^2,s^3\}$.
States $s^2$ and $s^3$ are absorbing and yield payoffs 2 and 3, respectively.
\item   In state $s^0$ the player has a single action. His payoff is $y$ and the play moves to state $s^1$.
\item   In state $s^1$ the player has two actions, $T$ and $B$. The payoff in this state and the transition
are given in  Figure \arabic{figurecounter}.
\end{itemize}

\bigskip

\centerline{
\begin{picture}(40,65)(0,-20)
\put( 0,0){\numbercellongb{$y$}{$^{s^1}$}}
\put( 20,-20){$s^0$}
\end{picture}
\ \ \
\begin{picture}(110,65)(-10,-20)
\put(-10, 8){$B$}
\put(-10,28){$T$}
\put( 50,-20){$s^1$}
\put( 0,20){\numbercellonga{$0$}{$^{[(1-p)(s^1),p(s^2)]}$}}
\put( 0,0){\numbercellonga{$-1$}{$^{[(1-p)(s^1),p(s^3)]}$}}
\end{picture}
\ \ \
\begin{picture}(40,65)(0,-20)
\put( 0,0){\numbercellongb{$2$}{$^{s^2}$}}
\put( 20,-20){$s^2$}
\end{picture}
\ \ \
\begin{picture}(40,65)(0,-20)
\put( 0,0){\numbercellongb{$3$}{$^{s^3}$}}
\put( 20,-20){$s^3$}
\end{picture}
}

\centerline{Figure \arabic{figurecounter}: The game in Example~\ref{example:1}.}

\bigskip

Let $\calD^i = \bigl\{ \{s^0,s^1\}, \{s^2\}, \{s^3\} \bigr\}$,
and $c^i = (c^i(D))_{D \in \calD^i}$ be the vector
\[ c^i(\{s^0,s^1\}) = 0, \ \ \ c^i(\{s^2\}) = 2, \ \ \ c^i(\{s^3\}) = 3. \]
Thus, the payoff may be lowered only in the set $\{s^0,s^1\}$.
We will show that for a proper choice of the parameters $\lambda$, $p$, and $y$,
the optimal strategy in the modified game ${\widehat\Gamma}_\lambda(s_0; \calD^i, c^i)$ depends on the initial state:
when the initial state is $s^1$ the optimal strategy consists of playing $T$ in state $s^1$,
while when the initial state is $s^0$ the optimal strategy consists of playing $B$ in state $s^1$.

The intuition for this result is as follows:
When the initial state is $s^1$ the payoff while the play is in $\{s^0,s^1\}$ is nonpositive,
hence the minimum with $c^i(\{s^0,s^1\})$ has no effect on the payoff in the modified game,
and the modified game is reduced to a standard Markov decision problem.
The parameters $p$ and $\lambda$ will be chosen so that the optimal strategy is to play $T$ in this case.
Suppose now that the initial state is $s^0$.
Since $y$ is positive, the player is better off choosing $B$ at state $s^1$ as long as his total discounted payoff before absorption is nonnegative.
This increases the probability to be absorbed in the better absorbing state $s^3$,
while not affecting the modified payoff that he receives
for the stages in which the play visits the set $\{s^0,s^1\}$.

We turn to the formal calculations.
As mentioned before, when the initial state is $s^1$ the modified game is a standard Markov decision problem,
hence there is an optimal strategy which is pure and stationary.
Since at state $s^1$ the play is absorbed at every stage with probability $p$,
the total discounted weight of the absorbing state on the payoff is
\[ \lambda p + (1-p) \lambda^2 p + (1-p)^2 \lambda^3 p + \cdots = \frac{\lambda p}{1-\lambda(1-p)}. \]
It follows that when the initial state is $s^1$, the stationary strategy $T$ is the unique optimal strategy as soon as
\[ 2  \frac{\lambda p}{1-\lambda(1-p)} > -1 \left( 1 -  \frac{\lambda p}{1-\lambda(1-p)}\right) + 3  \frac{\lambda p}{1-\lambda(1-p)}, \]
which solves to
\begin{equation*}
1 > \lambda(1+p).
\end{equation*}

Suppose now that the initial state is $s^0$.
We choose $y$ so that under the stationary strategy that plays $B$ in $s^0$, the discounted payoff when the play is in $\{s^0,s^1\}$ is $0$.
That is,
\[ (1-\lambda)y - \lambda\left(1 - \frac{\lambda p}{1-\lambda(1-p)}\right) = 0, \]
which solves to
\begin{equation}
\label{equ:113}
y = \frac{\lambda}{1-\lambda(1- p)}.
\end{equation}
This implies that when $y$, $\lambda$, and $p$ satisfy Eq.~(\ref{equ:113}),
under \emph{every} strategy, the modified payoff in the set $\{s^0,s^1\}$ is 0,
hence the optimal strategy is the one that maximizes the expected absorbing payoff,
which is the stationary strategy that plays $B$ at state $s^1$.
\addtocounter{figurecounter}{1}
\end{example}


We end this section by extending the semi-algebraic property of the set of discounted equilibria of stochastic games to the modified game.
Denote by $E_\lambda(s_0;\vec \calD,\vec c)$ the set of all stationary equilibria of the game $\widehat\Gamma_\lambda(s_0;\vec \calD,\vec c)$.
Because for every initial state $s_0 \in S$, every set of states $D \subseteq S$, and every player $i \in I$ the functions
$(\lambda,x) \mapsto t_\lambda(s_0,x;D)$
and $(\lambda,x) \mapsto U^i_\lambda(s_0,x;D)$
are semi-algebraic,
where $x$ ranges over all stationary strategy profiles,
we obtain the following result.

\begin{theorem}
\label{theorem:eq:semialgebraic}
For every initial state $s_0 \in S$,
every collection $\vec\calD = (\calD^i)_{i \in I}$ of partitions of the set of states,
and every collection of cutoffs $\vec c = (c^i)_{i \in I} \in \times_{i \in I}\dR^{\calD^i}$
the set-valued function $\lambda \mapsto E_\lambda(s_0;\vec\calD,\vec c)$ is semi-algebraic.
\end{theorem}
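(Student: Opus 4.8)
Let $G := \{(\lambda,x)\colon \lambda\in[0,1),\ x\in E_\lambda(s_0;\vec\calD,\vec c)\}\subseteq\dR\times\Sigmastat$ denote the graph of the set-valued function in question. Since a set-valued function is semi-algebraic exactly when its graph is a semi-algebraic set, the plan is to show that $G$ is semi-algebraic, and for this I would exhibit it as the solution set of a first-order formula over the ordered field of the reals and invoke the Tarski--Seidenberg theorem.

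First I would propagate semi-algebraicity through the modified payoff function. By the functions recalled just before the statement, for every $D\subseteq S$ and every $i\in I$ the maps $(\lambda,x)\mapsto t_\lambda(s_0,x;D)$ and $(\lambda,x)\mapsto U^i_\lambda(s_0,x;D)$ are semi-algebraic on $[0,1)\times\Sigmastat$ (they are in fact rational in $(\lambda,x)$, obtained by inverting $I-\lambda Q(x)$, where $Q(x)$ is the stationary transition matrix). Because the pointwise minimum of two semi-algebraic functions is semi-algebraic and a finite sum of semi-algebraic functions is semi-algebraic, Eq.~(\ref{equ:payoff:2}) shows that $(\lambda,x)\mapsto\widehat\gamma^i_\lambda(s_0,x;\calD^i,c^i)$ is semi-algebraic. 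Replacing the block of coordinates of $x$ that corresponds to player~$i$ by a fresh variable $y^i$ and running the same argument, the map $(\lambda,x,y^i)\mapsto\widehat\gamma^i_\lambda(s_0,y^i,x^{-i};\calD^i,c^i)$ is semi-algebraic on $[0,1)\times\Sigmastat\times\Sigmaistat$.

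Next I would describe $G$ logically. The sets $\Sigmastat$ and $\Sigmaistat$ are semi-algebraic, being finite products of simplices cut out by polynomial (in)equalities. By Lemma~\ref{lemma:br}, against any stationary profile $x^{-i}$ player~$i$ has a stationary best response, so $\max_{\sigma^i\in\Sigma^i}\widehat\gamma^i_\lambda(s_0,\sigma^i,x^{-i};\calD^i,c^i)=\max_{y^i\in\Sigmaistat}\widehat\gamma^i_\lambda(s_0,y^i,x^{-i};\calD^i,c^i)$; consequently a stationary profile $x$ belongs to $E_\lambda(s_0;\vec\calD,\vec c)$ if and only if $x\in\Sigmastat$ and for every $i\in I$ and every $y^i\in\Sigmaistat$ one has $\widehat\gamma^i_\lambda(s_0,x;\calD^i,c^i)\ge\widehat\gamma^i_\lambda(s_0,y^i,x^{-i};\calD^i,c^i)$. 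The conjunction over the finite set $I$ is harmless, and by the previous paragraph the matrix of each universally quantified block is a semi-algebraic condition on $(\lambda,x,y^i)$; applying Tarski--Seidenberg to eliminate the quantifier ``$\forall y^i\in\Sigmaistat$'' leaves, for each $i$, a semi-algebraic condition on $(\lambda,x)$. Intersecting these finitely many conditions with the semi-algebraic set $\{(\lambda,x)\colon\lambda\in[0,1),\ x\in\Sigmastat\}$ exhibits $G$ as semi-algebraic, as desired.

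I do not anticipate a real obstacle: the theorem is, at bottom, bookkeeping built on the semi-algebraicity of the discounted occupation and payoff maps together with quantifier elimination. The single point deserving care is the equilibrium condition itself: an equilibrium must a priori resist deviations to arbitrary behavior strategies, and it is exactly Lemma~\ref{lemma:br} that lets one replace this infinite-dimensional test by the finite-dimensional, semi-algebraically parametrized family of stationary deviations $y^i\in\Sigmaistat$, making the condition expressible in the first-order language of ordered fields.
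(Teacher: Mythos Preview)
Your proposal is correct and follows the same approach as the paper, which merely records in the sentence preceding the theorem that the maps $(\lambda,x)\mapsto t_\lambda(s_0,x;D)$ and $(\lambda,x)\mapsto U^i_\lambda(s_0,x;D)$ are semi-algebraic and leaves the remaining routine steps implicit. You have spelled out those steps carefully---in particular the use of Lemma~\ref{lemma:br} to reduce the equilibrium test to stationary deviations and the appeal to Tarski--Seidenberg to eliminate the quantifier over $y^i$---which is exactly the argument the paper is gesturing at.
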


\section{The Max-Min Value in Stochastic Games}
\label{section:max:min}

Our next goal is to study the max-min value in the modified game.
To this end we recall the concept of max-min value in the original stochastic game
and a result of Neyman (2003) on the uniform max-min value in stochastic game.

The \emph{$\lambda$-discounted max-min value of player $i$ at the initial state $s_0$} in the original game is given by
\begin{equation}
\label{equ:maxmin}
\underline v_\lambda^i(s_0) := \max_{\sigma^i \in \Sigma^i}\min_{\sigma^{-i} \in \Sigma^{-i}} \gamma_\lambda^i(s_0;\sigma^i,\sigma^{-i}).
\end{equation}
This is the maximal amount that player $i$ can guarantee if the other players get to know his strategy and try to minimize his payoff.
Because the $\lambda$-discounted payoff (see Eq.~(\ref{equ:payoff:discounted})) is a continuous function of the strategies of the players,
the maximum and minima in Eq.~(\ref{equ:maxmin}) are attained.
It is well known that the maximum and minima in Eq.~(\ref{equ:maxmin}) are attained by stationary strategies.
Moreover, the function $\lambda \mapsto \underline v_\lambda^i(s_0)$ is semi-algebraic
(see Bewley and Kohlberg (1976) and Neyman (2003)),
and therefore the limit
\[ \underline v_1^i(s_0) := \lim_{\lambda \to 1} \underline v_\lambda^i(s_0) \]
exists for every player $i \in I$ and every initial state $s_0 \in S$.
The quantity $\underline v_1^i(s_0)$ is called the \emph{uniform max-min value} of player $i$ at the initial state $s_0$.

For every two bounded stopping times $\tau < \tau'$, the expected average payoff between stages $\tau$ and $\tau'$ is
\[ \gamma^i(s_0,\sigma;\tau,\tau') := \frac{\E_{s_0,\sigma}[\sum_{n=\tau}^{\tau'} r(s_n,a_n) \mid \calH(\tau)]}{\E_{s_0,\sigma}[\tau - \tau' \mid \calH(\tau)]}. \]
Note that $\gamma^i(s_0,\sigma;\tau,\tau')$ is a random variable that is measurable according to the information at stage $\tau$.

\begin{definition}
\label{defin:mn:maxmin}
Let $\ep > 0$ and let $i \in I$ be a player.
A strategy $\sigma^i$ is a \emph{uniform $\ep$-max-min} strategy of player~$i$
if there exist $\lambda_0 \in [0,1)$ and $N_0 \in \dN$ such that the following holds
for every initial state $s_0 \in S$, every strategy profile $\sigma^{-i} \in \Sigma^{-i}$ of the other players,
every discount factor $\lambda \in (\lambda_0,1)$,
and every bounded stopping time $\tau$ that satisfies $\E_{s_0,\sigma^i,\sigma^{-i}}[\tau] \geq N_0$:
\begin{eqnarray}
\label{equ:mn:1}
\gamma^i_\lambda(s_0,\sigma^i,\sigma^{-i}) &\geq& \underline v^i_1(s_0) - \ep,\\
\label{equ:mn:2}
\gamma^i(s_0,\sigma^i,\sigma^{-i};0,\tau) &\geq& \underline v^i_1(s_0) - \ep.
\end{eqnarray}
\end{definition}

\begin{remark}[On the definition of uniform $\ep$-max-min strategies]
The standard definition of uniform $\ep$-max-min strategies requires Eq.~(\ref{equ:mn:2}) only for constant stopping times $\tau$.
We will need in the sequel the stronger version that we presented here.
Sometimes a uniform $\ep$-max-min strategy is required to satisfy that the expected long-run average payoff is at least
$\underline v^i_1(s_0) - \ep$ for every strategy profile of the other players (see Mertens and Neyman (1981) and Neyman (2003)).
Our results are not affected by adding this requirement.
\end{remark}

For every player $i \in I$, every strategy $\sigma^i \in \Sigma^i$,
and every finite history $h_n \in H$,
denote by $\sigma^i_{h_n}$ the strategy profile $\sigma$ conditioned on the history $h_n$, that is,
\[ \sigma^i_{h_n}(\widehat h_m) := \sigma^{i}(s_0,a_0,\cdots,s_{n-1},a_{n-1},\widehat s_0,\widehat a_0,\widehat s_1,\widehat a_1,\cdots,\widehat s_m), \ \ \
\forall \widehat h_m = (\widehat s_0,\widehat a_0,\cdots,\widehat s_m) \in H. \]

\begin{definition}
\label{definition:preserving}
Let $\ep > 0$ and let $i \in I$ be a player.
A strategy $\sigma^i$ of player~$i$ is \emph{$\ep$-max-min preserving} if
for every strategy profile $\sigma^{-i} \in \Sigma^{-i}$,
every play $h \in H^\infty$,
and every two bounded stopping times $\tau < \tau'$ we have
\begin{equation}
\label{equ:mn:5}
\E_{s_\tau,\sigma^i_{h_\tau},\sigma^{-i}_{h_\tau}}[\underline v^i_1(s_{\tau'})] \geq \underline v^i_1(s_\tau) -  \ep.
\end{equation}
\end{definition}

A uniform $\ep$-max-min strategy need not be $\ep$-max-min preserving,
and an $\ep$-max-min preserving strategy need not be a uniform $\ep$-max-min strategy.

Neyman (2003) adapted the construction of Mertens and Neyman (1981) and constructed for every $\ep > 0$ a uniform $\ep$-max-min strategy
that is also $\ep$-max-min preserving.
\begin{theorem}[Neyman, 2003]
\label{theorem:mn}
For every $\ep > 0$
and every player $i \in I$,
there is a uniform $\ep$-max-min strategy $\sigma^i$ that is also $\ep$-max-min preserving.
\end{theorem}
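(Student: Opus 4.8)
The plan is to reduce the statement to the standard construction of Mertens and Neyman (1981), as adapted by Neyman (2003), and then to verify that the strategy produced by that construction already has the $\ep$-max-min preserving property, or can be modified to have it at negligible cost. First I would recall the Mertens--Neyman machinery: fix a player $i$, fix a sequence $\lambda_n \uparrow 1$ of discount factors together with the associated stationary $\lambda_n$-optimal strategies for player~$i$ in the auxiliary zero-sum game defining $\underline v^i_\lambda$, and consider the potential (or ``reference level'') process $z_n := \underline v^i_{\lambda(n)}(s_n)$, where $\lambda(n)$ is updated according to the usual rule that lowers patience whenever the realized payoffs have been running above the current reference level and keeps it fixed otherwise. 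The semi-algebraicity of $\lambda \mapsto \underline v^i_\lambda(s)$ (Bewley--Kohlberg, Neyman~2003), together with the uniform Lipschitz-type estimates it yields, is exactly what makes $z_n$ behave like a bounded submartingale under any strategy profile of the opponents when player~$i$ follows the prescribed strategy. This is what gives Eqs.~(\ref{equ:mn:1}) and~(\ref{equ:mn:2}).

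Next I would extract the $\ep$-max-min preserving inequality~(\ref{equ:mn:5}) from the same submartingale structure. The key point is that inequality~(\ref{equ:mn:5}) is a one-step-ahead (indeed $\tau$-to-$\tau'$) statement about the conditional expectation of $\underline v^i_1(s_{\tau'})$, whereas the Mertens--Neyman analysis naturally controls $\underline v^i_{\lambda(n)}(s_n)$, not $\underline v^i_1(s_n)$. To bridge the gap I would use that $\underline v^i_\lambda(s) \to \underline v^i_1(s)$ uniformly in $s$ as $\lambda \to 1$ (finitely many states), so for $\lambda(n)$ close enough to $1$ the two differ by at most, say, $\ep/3$; the construction can be started from a patience level $\lambda_0$ close enough to $1$ that $\lambda(n) \geq \lambda_0$ throughout, since the update rule only increases patience. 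Then the submartingale property of $z_n$ conditioned on $\calH(\tau)$ gives $\E[z_{\tau'} \mid \calH(\tau)] \geq z_\tau$ (up to an $\ep/3$ slack absorbed from the finitely many ``correction'' steps in the construction), and replacing $z$ by $\underline v^i_1$ at both ends costs another $2\ep/3$. Combining, $\E_{s_\tau,\sigma^i_{h_\tau},\sigma^{-i}_{h_\tau}}[\underline v^i_1(s_{\tau'})] \geq \underline v^i_1(s_\tau) - \ep$, which is~(\ref{equ:mn:5}). One subtlety: the conditioning in~(\ref{equ:mn:5}) is along a fixed play $h$ up to time $\tau$, i.e. we condition on a specific history rather than on the $\sigma$-algebra; since the submartingale inequality holds conditionally on $\calH(\tau)$ and hence on every atom of it, the pointwise version for each $h_\tau$ follows.

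The main obstacle, and the step I would spend the most care on, is that the Mertens--Neyman strategy is built relative to discount factors $\lambda(n)$ that drift towards $1$ along the play, so the reference process is $\underline v^i_{\lambda(n)}(s_n)$ rather than the target quantity $\underline v^i_1(s_n)$; making the passage between these two uniform over all opponent strategies and all stopping times $\tau,\tau'$ simultaneously — rather than merely for each fixed one — requires that the discount-factor drift be controlled from below by a constant $\lambda_0 = \lambda_0(\ep)$ that does not depend on the play. That this can be arranged is precisely the content of the fact that the update rule is monotone (patience never decreases) and is started high enough; I would cite Neyman (2003) for the existence of such a uniform $\lambda_0$ and for the submartingale estimate, and present the argument above as the verification that his strategy, with $\lambda_0$ taken large enough, satisfies the stronger stopping-time formulations in Definitions~\ref{defin:mn:maxmin} and~\ref{definition:preserving}. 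If a direct citation covers both properties simultaneously, the proof reduces to a pointer to that reference together with the observation that the stopping-time strengthening is immediate from the submartingale property.
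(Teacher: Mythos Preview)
The paper does not prove this theorem; it simply attributes it to Neyman (2003), who adapts the Mertens--Neyman (1981) construction. So there is no ``paper's own proof'' to compare against beyond a citation, and your last sentence (``the proof reduces to a pointer to that reference'') is in fact exactly what the paper does.

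Your reconstruction of \emph{why} the result holds is in the right spirit---build the Mertens--Neyman strategy, control a potential process $z_n$ that is close to $\underline v^i_1(s_n)$, and read off both the uniform $\ep$-max-min property and the preserving inequality from its submartingale-like behavior---but there is a factual error in the mechanism you invoke. You write that ``the update rule only increases patience'' so that $\lambda(n)\ge\lambda_0$ throughout, and you rely on this to pass uniformly from $\underline v^i_{\lambda(n)}$ to $\underline v^i_1$. In the Mertens--Neyman construction the discount factor moves in \emph{both} directions: it becomes more patient after favorable stretches and less patient after unfavorable ones (your own earlier sentence, ``lowers patience whenever the realized payoffs have been running above the current reference level,'' already contradicts the later monotonicity claim). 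Hence $\lambda(n)$ is not bounded below by any fixed $\lambda_0$ close to $1$ uniformly over plays and opponents, and the bridge you build from $z_n$ to $\underline v^i_1(s_n)$ via $|\underline v^i_{\lambda(n)}(s)-\underline v^i_1(s)|<\ep/3$ does not hold as stated.

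The actual argument (Neyman, 2003) does not need $\lambda(n)$ to stay high. What replaces your monotonicity claim is the bounded-variation estimate $\int_0^1 |\tfrac{d}{d\lambda}\underline v^i_\lambda(s)|\,d\lambda<\infty$ coming from semi-algebraicity: this controls the cumulative drift incurred by moving $\lambda(n)$ up and down, and is what makes the corrected potential (roughly $\underline v^i_{\lambda(n)}(s_n)$ plus an integral term absorbing the $\lambda$-movement) a genuine submartingale up to an $\ep$ slack. Once that is in place, your optional-sampling step for (\ref{equ:mn:5}) goes through. So the fix is to replace the incorrect ``patience never decreases'' paragraph with the bounded-variation argument, or simply to cite Neyman (2003) for the full construction, as the paper does.
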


\subsection{Markovian $\ep$-Max-Min Strategies}

A strategy of a player is Markovian w.r.t.~a partition if the mixed action played at each stage
depends only on the play during the current visit to the element of the partition that contains the current state.
Formally,

\begin{definition}
Let $i \in I$ be a player
and let $\calD$ be a partition of the set of states.
A strategy $\sigma^i \in \Sigma^i$ is \emph{$\calD$-Markovian} if $\sigma^i_{h_n} = \sigma^i_{\widetilde h_{\widetilde n}}$
for every two finite histories $h_n = (s_0,a_0,\cdots,s_n)$ and $\widetilde h_{\widetilde n} = (\widetilde s_0,\widetilde a_0,\cdots,\widetilde s_{\widetilde n})$
that satisfy
\begin{itemize}
\item   The two histories end at the same state: $s_n = \widetilde s_{\widetilde n}$.
\item   In stage $n$ (resp.~in stage $\widetilde n$) the finite history $h_n$ (resp.~$\widetilde h_{\widetilde n}$)
enters a new element of $\calD$,
that is, $D(s_{n-1}) \neq D(s_n)$ and
$D(\widetilde s_{\widetilde n-1}) \neq D(\widetilde s_{\widetilde n})$,
where $D(s)$ is the element of the partition $\calD$ that contains $s$, for every state $s \in S$,

\end{itemize}
\end{definition}

One naive way to define a $\calD$-Markovian strategy from a strategy $\sigma^i$ is the following:
whenever the play enters an element of $\calD$,
player~$i$ forgets past play and restarts playing $\sigma^i$.
We will denote this strategy $\sigma^i_{\calD}$ and define it now formally.

Given a partition $\calD$ of the set of states,
let $(\tau_k^\calD)_{k \geq 0}$ be the sequence of stopping times that indicates when the play moves from one element of the partition $\calD$
to another element of the partition:
\begin{eqnarray*}
\tau^\calD_{0} &:=& 0,\\
\tau_k^\calD &:=& \min\left\{n > \tau^\calD_{k-1} \colon D(s_n) \neq D(s_{n-1})\right\}, \ \ \ k \in \dN,
\end{eqnarray*}
where the minimum of an empty set is $+\infty$.
The stages between stage $\tau_k^\calD$ and $\tau_{k+1}^\calD-1$ are called the a {$\calD$-run}.
For every $n \geq 0$ let $k(\calD;n)$ be the number of the $\calD$-run that contains stage $n$;
that is, it is the unique nonnegative integer $k$ that satisfies $\tau^{\calD}_{k} \leq n < \tau^{\calD}_{k+1}$.

Denote by $\varphi(h_n)$ the play along $h_n$ since the last switch of an element in $\calD$:
\[ \varphi(h_n) :=
(s_{\tau^\calD_{k(\calD,n)}(h_n)},a_{\tau^\calD_{k(\calD,n)}(h_n)},s_{\tau^\calD_{k(\calD,n)}(h_n)+1},a_{\tau^\calD_{k(\calD,n)}(h_n)+1},\cdots,s_n). \]
The strategy $\sigma^i_{\calD}$ that is defined by
\[ \sigma^i_{\calD}(h_n) := \sigma^i(\varphi(h_n)), \ \ \ \forall h_n \in H, \]
is $\calD$-Markovian.

For every play $h \in H^\infty$ and every partition $\calD$ of the set of states,
denote by $Z(h;\calD) \in \{0,1,2,\cdots,\infty\}$ the number of times in which the play switches between elements of $D$ along $h$:
\[ Z(h;\calD) := \sup \{ k  \geq 0 \colon \tau^\calD_k < +\infty \}. \]

\subsection{The Partition According to the Uniform Max-Min Value of Player~$i$}

It will be useful to single out the partition $\calD^i_*$ of the set of states $S$ according to the uniform max-min value of player~$i$:
two states $s,s' \in S$ are in the same element of the partition $\calD^i_*$ if and only if $\underline v^i_1(s) = \underline v^i_1(s')$.
The following result implies that when player~$i$ plays an $\ep$-max-min preserving strategy,
the expected number of times that the play moves between elements of $\calD^i_*$ is bounded
by a constant that is independent of $\ep$ and the strategy profile of the other players.
Before stating the result we provide a weaker version of the concept of an $\ep$-max-min preserving strategy.

\begin{definition}
Let $i \in I$ be a player, let $\ep > 0$,
and let $\calD$ be a partition of the set of states.
A strategy $\sigma^i \in \Sigma^i$ is \emph{$(\calD,\ep)$-max-min preserving} if
for every initial state $s_0 \in S$,
every strategy profile of the other players $\sigma^{-i} \in \Sigma^{-i}$,
every play $h \in H^\infty$,
every $k \geq 0$,
and every bounded stopping time $\tau^{\calD}_k < \tau' \leq \tau^{\calD}_{k+1}$
we have
\begin{equation*}
\E_{s_{\tau^{\calD^i_*}_k},\sigma^i_{h_{\tau^{\calD^i_*}_k}},\sigma^{-i}_{h_{\tau^{\calD^i_*}_k}}}[\underline v^i_1(s_{\tau'})] \geq
\underline v^i_1(s_{\tau^{\calD^i_*}_k}) -  \ep.
\end{equation*}
\end{definition}
Note that every $\ep$-max-min preserving strategy is in particular $(\calD,\ep)$-max-min preserving, for every partition $\calD$ of the set of states.
Moreover, if the strategy $\sigma^i$ is $\ep$-max-min preserving, then for every partition $\calD$ the strategy $\sigma^i_{\calD}$
is $(\calD,\ep)$-max-min preserving.

Denote the minimal distance between distinct uniform max-min values of some player by
\[ \rho := \min\{ |\underline v^i_1(s)-\underline v^i_1(s')| \colon s,s' \in S, i \in I, \underline v^i_1(s) \neq \underline v^i_1(s')\}. \]

\begin{theorem}
\label{theorem:partition2}
For every $\delta > 0$
there is $\ep_0 > 0$ such that for every $\ep < \ep_0$,
every player $i \in I$,
every $(\calD^i_*,\ep)$-max-min preserving strategy $\sigma^i \in \Sigma^i$,
and every strategy profile $\sigma^{-i} \in \Sigma^{-i}$ there is an event $E$ for which $\prob_{s_0,\sigma^i,\sigma^{-i}}(E) > 1-\delta$ and
\begin{equation}
\label{equ:312}
\E_{s_0,\sigma^i,\sigma^{-i}}[Z(\cdot;\calD^i_*) \cdot \textbf{1}_E] \leq C(\rho),
\end{equation}
where $C(\rho)$ is some constant that is independent of $\delta$.
\end{theorem}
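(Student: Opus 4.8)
The plan is to follow the uniform max-min value of player~$i$ along the times the play switches between the elements of $\calD^i_*$, to show that this finite-valued process is, up to a controllable error, a bounded submartingale, and then to bound $Z(\cdot;\calD^i_*)$ by combining an elementary pathwise estimate with Doob's upcrossing inequality. The two-scale nature of the statement --- a constant $C(\rho)$ independent of $\delta$, at the price of an event $E$ of probability $>1-\delta$ and a threshold $\ep_0$ depending on $\delta$ --- will come out of the fact that the relevant submartingale carries a drift of $\ep$ per switch.

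\textbf{Setup and the submartingale property.} Fix $i$, $\ep$, a $(\calD^i_*,\ep)$-max-min preserving strategy $\sigma^i$, and a profile $\sigma^{-i}$; write $\tau_k := \tau^{\calD^i_*}_k$, $Z := Z(\cdot;\calD^i_*)$, and $v_k := \underline v^i_1(s_{\tau_k})$ on $\{\tau_k<\infty\}$, with the convention $v_k := v_{k-1}$ on $\{\tau_k=\infty\}$. By the definition of $\calD^i_*$ a switch strictly changes the value of $\underline v^i_1$, so $(v_k)$ takes values in the finite set $\{\underline v^i_1(s):s\in S\}\subseteq[-1,1]$ and $|v_{k+1}-v_k|\ge\rho$ whenever $\tau_k,\tau_{k+1}<\infty$. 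Conditioning on the history $h_{\tau_k}$ and restarting the game at $s_{\tau_k}$, I would apply the defining inequality of $(\calD^i_*,\ep)$-max-min preservation with the stopping time ``$\tau_{k+1}\wedge M$'' of the restarted game and let $M\to\infty$ (bounded convergence, using that the value stays at $v_k$ on $\{\tau_{k+1}=\infty\}$) to get, on $\{\tau_k<\infty\}$,
\[ \E_{s_0,\sigma^i,\sigma^{-i}}\big[\,(v_{k+1}-v_k)\,\mathbf 1_{\{\tau_{k+1}<\infty\}} \mid \calH(\tau_k)\,\big]\ \ge\ -\ep . \]
A short computation then shows that, for each integer $N$, the process obtained by freezing $v_k+k\ep$ at the $N$-th switch and adding a compensating term of size $\ep$ on $\{\tau_k=\infty\}$ is a genuine submartingale with respect to $(\calH(\tau_k))_k$; call it $\widehat Y$. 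It is bounded: $-1\le\widehat Y_k\le 1+(N+1)\ep$.

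\textbf{Pathwise bound and up-switches as upcrossings.} Let $U_k$ and $D_k$ be the numbers of indices $j<k$ with $\tau_{j+1}<\infty$ and $v_{j+1}>v_j$, respectively $v_{j+1}<v_j$, so that $k=U_k+D_k$ for $k\le Z$. From $v_k-v_0=\sum_{j<k}(v_{j+1}-v_j)\le 2U_k-\rho D_k$ and $v_k\ge-1$, $v_0\le1$ one gets $\rho D_k\le 2+2U_k$, hence the pathwise bound
\[ Z\ \le\ \Big(1+\tfrac2\rho\Big)U_Z+\tfrac2\rho . \]
So it suffices to bound $\E_{s_0,\sigma^i,\sigma^{-i}}[U_Z\,\mathbf 1_E]$. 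Write the distinct values as $u_1<\cdots<u_m$ ($m\le|S|$) and let $\widehat V^{[j]}$ ($1\le j<m$) be the number of upcrossings of the interval $[u_j+\rho/3,\ u_{j+1}-\rho/3]$ by $\widehat Y$. As long as $(N+1)\ep<\rho/3$, a jump of $v$ of size $\ge\rho$ is still, after adding the $\ep$-drift, a genuine upcrossing of one of these intervals (the one just below the new value), and distinct up-switches among the first $N$ switches give distinct upcrossings; hence $U_{Z\wedge N}\le\sum_{j=1}^{m-1}\widehat V^{[j]}$ everywhere. Since $\widehat Y$ is bounded and each interval has length $\ge\rho/3$, Doob's upcrossing inequality gives $\E_{s_0,\sigma^i,\sigma^{-i}}[\widehat V^{[j]}]\le 9/\rho$, so $\E_{s_0,\sigma^i,\sigma^{-i}}[U_{Z\wedge N}]\le 9|S|/\rho$ for every $N$ and every $\ep<\rho/(3(N+1))$.

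\textbf{Choosing $E$ and $\ep_0$.} The one real difficulty is that these estimates are useful only while the number of switches stays below a threshold of order $1/\ep$; this is handled by choosing the threshold first from $\delta$ and only then $\ep_0$. On $\{Z\ge N\}$ the pathwise bound gives $U_{Z\wedge N}=U_N\ge (N-2/\rho)/(1+2/\rho)$, so the bound on $\E[U_{Z\wedge N}]$ yields
\[ \prob_{s_0,\sigma^i,\sigma^{-i}}(Z\ge N)\ \le\ \frac{9|S|(1+2/\rho)/\rho}{\,N-2/\rho\,}, \]
which tends to $0$ as $N\to\infty$. Given $\delta>0$, fix $N$ so that the right-hand side is $<\delta$, put $E:=\{Z<N\}$ (so $\prob_{s_0,\sigma^i,\sigma^{-i}}(E)>1-\delta$) and $\ep_0:=\rho/(3(N+1))$. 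Then for every $\ep<\ep_0$ the perturbation condition $(N+1)\ep<\rho/3$ holds, and on $E$ we have $U_Z=U_{Z\wedge N}\le\sum_j\widehat V^{[j]}$, whence
\[ \E_{s_0,\sigma^i,\sigma^{-i}}[Z\,\mathbf 1_E]\ \le\ \Big(1+\tfrac2\rho\Big)\E_{s_0,\sigma^i,\sigma^{-i}}\Big[\textstyle\sum_{j}\widehat V^{[j]}\Big]+\tfrac2\rho\ \le\ \Big(1+\tfrac2\rho\Big)\frac{9|S|}{\rho}+\tfrac2\rho\ =:\ C(\rho), \]
which depends only on $\rho$ and $|S|$, as required. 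Besides the drift bookkeeping just described, the steps I expect to need the most care are the verification that the corrected, stopped process $\widehat Y$ is really a submartingale, and the claim that a jump of size $\ge\rho$ of $v$ survives as a bona fide upcrossing of $\widehat Y$ once the accumulated drift is capped below $\rho/3$; the remaining estimates are standard.
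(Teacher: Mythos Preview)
Your proposal is correct and follows essentially the same route as the paper: both form the submartingale $\underline v^i_1(s_{\cdot})+\ep\cdot(\text{switch count})$, bound the number of switches via a crossing inequality (you unpack this explicitly with a pathwise up/down decomposition and Doob's upcrossing inequality per value-gap, while the paper cites a packaged bound on the number of changes of a bounded submartingale with jumps of size at least $\eta$), and then invoke Markov's inequality to produce the high-probability event $E$. The remaining differences are cosmetic: the paper works at the stage level and caps the process at $1+\tfrac{\rho}{2}$, whereas you work at the switching-time level and stop the process at the $N$-th switch.
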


\begin{proof}
We first recall a general result on submartingales.
Let $\eta > 0$ and let $(W_n)_{n \geq 0}$ be a submartingale that satisfies the following properties for every $n \geq 0$:
\begin{itemize}
\item  The random variable $W_n$ attains values in the interval $[-1,2]$.
\item   Either $W_{n+1} = W_n$ or $|W_{n+1}-W_n| \geq \eta$.
\end{itemize}
Denote by $Z^*$ the number of times in which the process $(W_n)$ changes its value:
\begin{equation}
\label{equ:311}
Z^* := \#\{n \geq 0 \colon W_n \neq W_{n+1}\}.
\end{equation}
Then there is a constant $C(\eta)$ such that $\E[Z^*] \leq C(\eta)$.
Moreover, the function $\eta \mapsto C(\eta)$ can be taken to be monotonic nonincreasing.
This result can be deduced by the bound on the expected number of downcrossings of a bounded submartingale, see, e.g., Billingsley, 1995, Theorem 35.4.

Let now $\delta > 0$ and
fix two real numbers $K \geq \frac{C(\tfrac{\rho}{2})}{\delta}$ and $\ep_0 < \frac{\rho}{2K}$.
Define a stochastic process $(W_n)_{n=0}^\infty$ by
\[ W_n := \left\{
\begin{array}{lll}
\underline{v}^i_1(s_n) + \ep_0 k(\calD^i_*;n), & \ \ \ \ \ & W_{n-1} < 1+\tfrac{\rho}{2},\\
W_{n-1} & & W_{n-1} \geq 1+\tfrac{\rho}{2}.
\end{array}
\right.
\]
Note that $W_n < 1+\rho$ for every $n \geq 0$.
Ignoring the upper bound of $1+\tfrac{\rho}{2}$ that we impose on the process $(W_n)_{n \in \dN}$,
this process is equal to the uniform max-min value of the current state,
plus $\ep$ multiplied by the number of times in which the uniform max-min value changed along the play.

Let $\ep < \ep_0$, let $\sigma^i$ be a $(\calD^i_*,\ep)$-max-min preserving strategy of player~$i$,
and let $\sigma^{-i}$ be any strategy profile of the other players.
Since $\sigma^i$ is a $(\calD^i_*,\ep)$-max-min preserving strategy,
the process $(W_n)_{n=0}^\infty$ is a submartingale under $\prob_{s_0,\sigma^i,\sigma^{-i}}$.
Whenever the uniform max-min value of player~$i$ changes, the value of $W_n$ changes by at least $\rho - \ep_0$.
By Eq.~(\ref{equ:311}),
$\E_{s_0,\sigma^i,\sigma^{-i}}[Z^*] \leq C(\rho- \ep_0) \leq C(\tfrac{\rho}{2})$.
By Markov's Inequality,
$\prob_{s_0,\sigma^i,\sigma^{-i}}(Z^* < K) \geq 1-\tfrac{C(\tfrac{\rho}{2})}{K} \geq 1-\delta$.
Define the event
$E := \{ Z^* < K \}$.
On this event we have $W_n \leq 1 + K\ep < 1+\tfrac{\rho}{2}$ for every $n \geq 0$, and therefore on this event $Z(\cdot ; \calD^i_*) = Z^*$.
Moreover,
$\prob_{s_0,\sigma^i,\sigma^{-i}}(E) \geq 1-\delta$,
and
\[ \E_{s_0,\sigma^i,\sigma^{-i}}[Z(\cdot;\calD^i_*) \cdot \mathbf{1}_E] \leq \E_{s_0,\sigma^i,\sigma^{-i}}[Z^*] \leq C(\tfrac{\rho}{2}), \]
and the result follows.
\end{proof}

\bigskip

As a conclusion of Theorem~\ref{theorem:partition2} we deduce that in the notations of the statement of the theorem,
if the strategy $\sigma^i$ is in addition $\calD^i_*$-Markovian, then the expected number of times the uniform max-min value
of player~$i$ changes along the play is bounded.

\begin{corollary}
\label{cor:markovian}
In the notations of Theorem~\ref{theorem:partition2},
if the strategy $\sigma^i$ is in addition $\calD^i_*$-Markovian then
$\E_{s_0,\sigma^i,\sigma^{-i}}[Z(\cdot;\calD^i_*)] \leq \tfrac{4C(\rho)}{1-\delta}$.
\end{corollary}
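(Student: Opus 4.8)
The plan is to derive this from Theorem~\ref{theorem:partition2} by a renewal argument that exploits the Markovian structure. Let $M$ denote the supremum of $\E_{s,\tau^i,\tau^{-i}}[Z(\cdot;\calD^i_*)]$ over all initial states $s\in S$, all $\calD^i_*$-Markovian $(\calD^i_*,\ep)$-max-min preserving strategies $\tau^i$ of player~$i$, and all strategy profiles $\tau^{-i}$ of the other players. The quantity in the corollary is at most $M$, so it suffices to prove $M\le\tfrac{4C(\rho)}{1-\delta}$. To keep the self-referential estimate below legitimate I would actually work with the truncations $M_n:=\sup\E_{s,\tau^i,\tau^{-i}}[\min\{Z(\cdot;\calD^i_*),n\}]$, which are finite ($M_n\le n$), and let $n\to\infty$ at the end.

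First I would reread the construction in the proof of Theorem~\ref{theorem:partition2} and extract exactly what is needed: the good event there is $E=\{Z^*<K\}$, and since the cap imposed on $(W_n)$ cannot activate before more than $K$ switches have occurred, on $E$ one has $Z^*=Z(\cdot;\calD^i_*)$, and in fact $E=\{Z(\cdot;\calD^i_*)<K\}$ while $E^c=\{Z(\cdot;\calD^i_*)\ge K\}=\{\tau^{\calD^i_*}_K<\infty\}$. Moreover $K$ may be taken to be the minimal admissible value, so that $K\delta$ equals the constant $C(\rho)$ of Theorem~\ref{theorem:partition2}; and we keep $\prob(E^c)<\delta$ and $\E[Z(\cdot;\calD^i_*)\mathbf 1_E]\le C(\rho)$.

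The renewal step is the core. Write $T:=\tau^{\calD^i_*}_K$ and let $W$ be the number of switches between elements of $\calD^i_*$ that occur after stage $T$, so that $Z(\cdot;\calD^i_*)=K+W$ on $\{T<\infty\}$. Since $T$ is a stopping time with $\{T<\infty\}\in\calH(T)$, and since $\sigma^i$ is $\calD^i_*$-Markovian, conditionally on $\calH(T)$ and on $\{T<\infty\}$ the play after stage $T$ has the law of a play of the stochastic game started at $s_T$ in which player~$i$ uses a $\calD^i_*$-Markovian $(\calD^i_*,\ep)$-max-min preserving strategy (the continuation of $\sigma^i$, which by Markovianity depends only on $s_T$ and inherits both properties) and the other players use some strategy profile; hence $\E[\min\{W,n\}\mid\calH(T)]\le M_n$ on $\{T<\infty\}$, and therefore $\E[\min\{W,n\}\mathbf 1_{E^c}]\le M_n\,\prob(E^c)<\delta M_n$.

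Finally I would combine the pieces: using $\min\{Z(\cdot;\calD^i_*),n\}\le\min\{Z(\cdot;\calD^i_*),n\}\mathbf 1_E+(K+\min\{W,n\})\mathbf 1_{E^c}$, taking expectations, and invoking $\E[Z(\cdot;\calD^i_*)\mathbf 1_E]\le C(\rho)$, $\prob(E^c)<\delta$, $K\delta=C(\rho)$, and the bound above, we obtain, for every admissible configuration, $\E[\min\{Z(\cdot;\calD^i_*),n\}]\le C(\rho)+K\delta+\delta M_n\le 2C(\rho)+\delta M_n$; taking the supremum over configurations gives $M_n\le 2C(\rho)+\delta M_n$, hence $M_n\le\tfrac{2C(\rho)}{1-\delta}$, and letting $n\to\infty$ yields $M\le\tfrac{2C(\rho)}{1-\delta}\le\tfrac{4C(\rho)}{1-\delta}$, which proves the corollary. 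The main obstacle I expect is the renewal step: one must check carefully that the post-$T$ continuation of $\sigma^i$ is again $\calD^i_*$-Markovian and $(\calD^i_*,\ep)$-max-min preserving (so that it is one of the strategies over which $M_n$ is a supremum and Theorem~\ref{theorem:partition2} applies at $s_T$), and that the identifications $E=\{Z(\cdot;\calD^i_*)<K\}$ and $E^c=\{\tau^{\calD^i_*}_K<\infty\}$ hold exactly as claimed; the truncation to $M_n$ is what makes the resulting self-referential inequality rigorous.
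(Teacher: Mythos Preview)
Your proof is correct and rests on the same renewal idea as the paper's, but the executions differ in a way worth noting. The paper's two-line argument first applies Markov's inequality to $Z\mathbf 1_E$ to produce an event $\{Z\le 2C(\rho)\}$ of probability at least $\tfrac{1-\delta}{2}$, and then writes the self-referential bound $x\le 2C(\rho)+\tfrac{1+\delta}{2}x$ for $x=\E_{s_0,\sigma^i,\sigma^{-i}}[Z]$, solving to $x\le \tfrac{4C(\rho)}{1-\delta}$; the renewal stopping time and the passage from the fixed initial state $s_0$ to the random restart state are left implicit. You instead bypass Markov's inequality by exploiting the explicit identification $E=\{Z<K\}$ from the proof of Theorem~\ref{theorem:partition2}, renew at $T=\tau^{\calD^i_*}_K$, and make the self-referential step rigorous via the truncations $M_n$ and the supremum over all admissible triples $(s,\tau^i,\tau^{-i})$. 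Your route is tighter (it yields $\tfrac{2C(\rho)}{1-\delta}$) and cleaner on exactly the points the paper hand-waves; the paper's route is shorter. The one place to be careful, as you anticipated, is the bookkeeping $K\delta\approx C(\rho)$: in the proof of Theorem~\ref{theorem:partition2} the minimal admissible $K$ is $C(\tfrac{\rho}{2})/\delta$ and the bound obtained is $\E[Z^*]\le C(\tfrac{\rho}{2})$, which is then \emph{called} $C(\rho)$ in the statement---so your identification is fine once the constant in the statement is understood as that produced by the proof.
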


\begin{proof}
By Eq.~(\ref{equ:312}) and Markov's inequality,
$\prob_{s_0,\sigma^i,\sigma^{-i}}(Z(\cdot;\calD^i_* \cdot \textbf{1}_E) \leq 2C(\rho)) \geq \tfrac{1}{2}(1-\delta)$.
Denoting $x:= \E_{s_0,\sigma^i,\sigma^{-i}}[Z(\cdot;\calD^i_*)]$ we deduce that
$x \leq 2C(\rho)+\tfrac{1}{2}(1+\delta)x$, and the claim follows.
\end{proof}

\bigskip

The following result%
\footnote{Abraham Neyman advised the author that Jean-Fran\c{c}ois Mertens and himself were aware of this result.}
implies in particular that every player~$i$ has a $\calD^i_*$-Markovian uniform $\ep$-max-min strategy, for every $\ep > 0$.

\begin{theorem}
\label{theorem:weakly:preserving}
For every $\delta > 0$ there is $\ep_0 > 0$ such that
for every $\ep < \ep_0$ and every uniform $\ep$-max-min strategy $\sigma^i$ that is $\ep$-max-min preserving,
the strategy $\sigma^i_{\calD^i_*}$ is uniform $\delta$-max-min and
$(\calD^i_*,\ep)$-max-min preserving.
\end{theorem}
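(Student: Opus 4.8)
The plan is as follows. The second assertion is immediate: since $\sigma^i$ is $\ep$-max-min preserving, the observation preceding Theorem~\ref{theorem:partition2} already shows that $\sigma^i_{\calD^i_*}$ is $(\calD^i_*,\ep)$-max-min preserving. So I would concentrate on proving that $\sigma^i_{\calD^i_*}$ is uniform $\delta$-max-min, i.e.\ on verifying the two inequalities of Definition~\ref{defin:mn:maxmin} for $\sigma^i_{\calD^i_*}$ with $\delta$ in place of $\ep$.

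Fix an initial state $s_0$ and a profile $\sigma^{-i}$ and work under $\prob_{s_0,\sigma^i_{\calD^i_*},\sigma^{-i}}$; write $v:=\underline v^i_1(s_0)$ and $Z_n:=k(\calD^i_*;n)$. First I would record two facts. Since $\sigma^i_{\calD^i_*}$ is $\calD^i_*$-Markovian and $(\calD^i_*,\ep)$-max-min preserving, Corollary~\ref{cor:markovian} (with $\tfrac12$ in place of its $\delta$) gives $\E[Z(\cdot;\calD^i_*)]\le M$, where $M:=8C(\rho)$ depends only on the game, as soon as $\ep$ is below the threshold of that corollary. And the switch-value process $V_k:=\underline v^i_1(s_{\tau^{\calD^i_*}_k})$, frozen once $\tau^{\calD^i_*}_k=\infty$, satisfies $\E[V_{k+1}\mid\calH(\tau^{\calD^i_*}_k)]\ge V_k-\ep$ — this is exactly $(\calD^i_*,\ep)$-max-min preservation of $\sigma^i_{\calD^i_*}$ — so $(V_k+k\ep)_k$ is a submartingale for $(\calH(\tau^{\calD^i_*}_k))_k$; since $Z_n$ is a bounded stopping time for this filtration and $\underline v^i_1(s_n)=V_{Z_n}$, optional stopping gives $\E[\underline v^i_1(s_n)]\ge v-\ep\E[Z_n]\ge v-\ep M$ for every $n$.

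Next I would decompose the play into $\calD^i_*$-runs. Let $\lambda_0(\ep),N_0(\ep)$ witness that $\sigma^i$ is uniform $\ep$-max-min. Conditioned on reaching the $k$-th switch time, inside the $k$-th run the profile $(\sigma^i_{\calD^i_*},\sigma^{-i})$ coincides with $\sigma^i$ played afresh from $s_{\tau^{\calD^i_*}_k}$ — whose uniform max-min value is $V_k$ — against a continuation of $\sigma^{-i}$, up to the exit time from the current element of $\calD^i_*$ truncated at the horizon $N$ (a bounded stopping time). Calling a run \emph{short} if the conditional expected number of its stages in $\{0,\dots,N-1\}$ is below $N_0$ and \emph{long} otherwise, the average-payoff inequality of Definition~\ref{defin:mn:maxmin} applied to $\sigma^i$ bounds the payoff collected on a long run below by $V_k-\ep$ times its expected truncated length, up to an $O(1)$ term, while on a short run it is at least minus that length, an $O(N_0)$ term. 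The number of runs meeting $\{0,\dots,N-1\}$ is at most $Z(\cdot;\calD^i_*)+1$, of expectation $\le M+1$; summing the run estimates, using $\underline v^i_1(s_n)=V_k$ on the $k$-th run and the bound $\E[\underline v^i_1(s_n)]\ge v-\ep M$, one obtains $\gamma^i_N(s_0,\sigma^i_{\calD^i_*},\sigma^{-i})\ge v-\ep(M+1)-(M+1)(1+2N_0)/N$. Choosing $\ep_0$ to be the smaller of the Corollary~\ref{cor:markovian} threshold and $\delta/(2(M+1))$ — depending only on $\delta$ and the game — this is $\ge v-\delta$ for $\ep<\ep_0$ and $N$ large (in terms of $\sigma^i$ and $\delta$), which is the $N$-stage inequality; rewriting it as $\E[\sum_{n<N}u^i(s_n,a_n)]\ge(v-\ep(M+1))N-C$ for all $N$ with a suitable constant $C$ and applying the Abel identity $(1-\lambda)\sum_n\lambda^nx_n=(1-\lambda)^2\sum_{N\ge1}\lambda^{N-1}\sum_{n<N}x_n$ gives $\gamma^i_\lambda(s_0,\sigma^i_{\calD^i_*},\sigma^{-i})\ge v-\ep(M+1)-C(1-\lambda)\ge v-\delta$ for $\lambda$ close to $1$, which is the discounted inequality; the version for a general bounded stopping time follows by the same decomposition with that stopping time replacing $N$, argued conditionally at each switch and again using optional stopping for $(V_k+k\ep)_k$.

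The hard part is the boundary bookkeeping: at each passage into a new element of $\calD^i_*$ one loses a little — from restarting $\sigma^i$ near a boundary, and from a possible dip in the uniform max-min value — and the point is to make these losses aggregate to $O(\ep M)$ rather than grow with the random number of runs. This is exactly where both hypotheses on $\sigma^i$ are needed together: uniform $\ep$-max-minimality controls the payoff inside each run, $\ep$-max-min preservation makes $(V_k+k\ep)_k$ a submartingale so that the visited values stay above $v-\ep M$ on average, and Corollary~\ref{cor:markovian} caps the number of runs. Adapting the deterministic-horizon estimate to arbitrary bounded stopping times is the remaining technical wrinkle.
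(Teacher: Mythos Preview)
Your approach is essentially the same as the paper's: both arguments (i) observe that $\sigma^i_{\calD^i_*}$ is $(\calD^i_*,\ep)$-max-min preserving by construction, (ii) bound the expected number of switches between elements of $\calD^i_*$ (you via Corollary~\ref{cor:markovian}, the paper via Theorem~\ref{theorem:partition2} and a high-probability event), (iii) iterate the preservation property to show $\E[\underline v^i_1(s_n)]\ge v-O(\ep)$, and (iv) decompose the horizon into $\calD^i_*$-runs, controlling long runs by the uniform $\ep$-max-min property of $\sigma^i$ and absorbing the short runs into the error. The packaging differs slightly---you work with expectations throughout and recover the discounted inequality from the $N$-stage one by Abel summation, whereas the paper restricts to a high-probability event and argues the stopping-time version directly---but the substance is the same.

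One small technical slip: $Z_n=k(\calD^i_*;n)$ is \emph{not} a stopping time for the filtration $(\calH(\tau^{\calD^i_*}_k))_k$, since $\{Z_n\le k\}=\{\tau^{\calD^i_*}_{k+1}>n\}$ depends on the play after $\tau^{\calD^i_*}_k$. The conclusion $\E[\underline v^i_1(s_n)]\ge v-\ep(M+1)$ is nonetheless correct: just iterate the inequality $\E[\underline v^i_1(s_{\tau^{\calD^i_*}_{k+1}\wedge n})\mid\calH(\tau^{\calD^i_*}_k\wedge n)]\ge \underline v^i_1(s_{\tau^{\calD^i_*}_k\wedge n})-\ep\mathbf{1}_{\{\tau^{\calD^i_*}_k<n\}}$ over $k$ and take expectations, picking up a total loss of $\ep\sum_k\prob(\tau^{\calD^i_*}_k<n)\le\ep(M+1)$---which is exactly what the paper does.
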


\begin{proof}
Fix $\delta > 0$ and a player $i \in I$.
Let $\ep$ be smaller than the quantity $\ep_0$ given in Theorem~\ref{theorem:partition2} and smaller than $\tfrac{\delta\rho}{2C(\rho)}$,
where $C(\rho)$ is the constant given in Theorem~\ref{theorem:partition2}.
Let $\sigma^i$ be a uniform $\ep$-max-min strategy that is $\ep$-max-min preserving
and fix a strategy profile of the other players $\sigma^{-i} \in \Sigma^{-i}$.

By construction, the strategy $\sigma^i_{\calD^i_*}$ is $(\calD^i_*,\ep)$-max-min preserving.
It remains to show that the strategy $\sigma^i_{\calD^i_*}$ is a uniform $\delta$-max-min strategy.

For every two stopping times $\tau$ and $\tau'$ denote their minimum by
\[ \tau \wedge \tau' := \min\{\tau,\tau'\}. \]
Since the strategy $\sigma^i_{\calD^i_*}$ is $(\calD^i_*,\ep)$-max-min preserving,
for every $k \geq 0$ we have
\begin{equation}
\label{equ:Z2}
v(s_{\tau^{\calD^i_*}_k \wedge \tau}) \leq \E_{s_0,\sigma^i_{\calD^i_*},\sigma^{-i}}[v(s_{\tau^{\calD^i_*}_{k+1} \wedge \tau})] - \ep
\ \hbox{ on the event } \{ \tau^{\calD^i_*}_k \wedge \tau < \tau^{\calD^i_*}_{k+1} \wedge \tau\}.
\end{equation}
Using iteratively Eq.~(\ref{equ:Z2}) over all $k \geq 0$ we deduce that for every bounded stopping time $\tau$
\begin{eqnarray*}
\underline v^i_1(s_0) &\leq& \E_{s_0,\sigma^i_{\calD^i_*},\sigma^{-i}}[\underline v^i_1(s_{\tau})] -
\ep\left( 1 + \sum_{k \geq 0} \prob_{s_0,\sigma^i_{\calD^i_*},\sigma^{-i}}(\tau^{\calD^i_*}_k \wedge \tau < \tau^{\calD^i_*}_{k+1} \wedge \tau)\right)\\
&\leq& \E_{s_0,\sigma^i_{\calD^i_*},\sigma^{-i}}[\underline v^i_1(s_{\tau})] -
\ep C_2(\rho),
\end{eqnarray*}
where $C_2(\rho) = \tfrac{4C(\rho)}{1-\delta}$ and the last inequality follows from Corollary~\ref{cor:markovian}.
By averaging this inequality over all constant stopping times $\tau \in \{0,1,\cdots,N\}$ we conclude that for every $N \geq 0$,
\begin{equation}
\label{equ:313}
\underline v^i_1(s_0) \leq \frac{1}{N} \sum_{n=0}^{N-1} \underline v^i_1(s_n) - \ep C_2(\rho).
\end{equation}

By Theorem~\ref{theorem:partition2}
there is an event $E$ for which $\prob_{s_0,\sigma^i_{\calD^i_*},\sigma^{-i}}(E) > 1-\delta$ and
\begin{equation}
\label{equ:Z1}
\E_{s_0,\sigma^i_{\calD^i_*},\sigma^{-i}}[Z(\cdot;\calD^i_*) \cdot \textbf{1}_E] \leq C(\rho).
\end{equation}
By Eq.~(\ref{equ:Z1}) and Markov's inequality we deduce that
\begin{eqnarray}
\label{equ:prob:f}
\prob_{s_0,\sigma^i_{\calD^i_*},\sigma^{-i}}
\left(Z(\cdot;\calD^i_*) < \tfrac{\rho}{2\ep} \mid E\right) &\geq& 1-\tfrac{2\ep}{\rho} \E_{s_0,\sigma^i_{\calD^i_*},\sigma^{-i}}[Z(\cdot;\calD^i_*) \mid E]\\
&\geq& 1-\frac{2\ep}{\rho}C(\rho) \geq 1-\delta,
\ \ \ \forall \sigma^{-i} \in \Sigma^{-i}.
\nonumber
\end{eqnarray}
Denote by $F$ the event
\[ F := E \cap \left\{ Z(\cdot;\calD^i_*) < \tfrac{\rho}{2\ep} \right\}. \]
Since $\prob_{s_0,\sigma^i_{\calD^i_*},\sigma^{-i}}(E) > 1-\delta$ and
by Eq.~(\ref{equ:prob:f}), $\prob_{s_0,\sigma^i_{\calD^i_*},\sigma^{-i}}(F) \geq 1-2\delta$.
Since the strategy $\sigma^i_{\calD^i_*}$ is $\calD^i_*$-Markovian and since the strategy $\sigma^i$ is $\ep$-max-min preserving,
for every $k \geq 0$ and every bounded stopping time $\tau'$ that satisfies $\tau^{\calD^i_*}_k < \tau' \leq \tau^{\calD^i_*}_{k+1}$ we have
\begin{equation}
\label{equ:10a}
\gamma^i(s_0,\sigma^i_{\calD^i_*},\sigma^{-i}; \tau^{\calD^i_*}_k,\tau') \geq \underline v^i_1(s_{\tau^{\calD^i_*}_k}) - \ep
\
\hbox{ on the event }
\left\{ \E_{s_0,\sigma^i_{\calD^i_*},\sigma^{-i}}[\tau' - \tau^{\calD^i_*}_k \mid \calH(\tau^{\calD^i_*}_k)] \geq N_0 \right\},
\end{equation}
where $N_0$ is the constant given in Theorem~\ref{theorem:mn}.

On the event $F$ the strategy $\sigma^i$ restarts at most $\tfrac{\rho}{2\ep}$ times,
and therefore on this event the expected number of stages that take part in $\calD^i_*$-runs shorter than $N_0$ is at most $\tfrac{\rho N_0}{2\ep}$.
Since $\prob_{s_0,\sigma^i_{\calD^i_*},\sigma^{-i}}(F) > 1-2\delta$, since payoffs are bounded by 1,
and by Eqs.~(\ref{equ:313}) and~(\ref{equ:10a})
we deduce that for every stopping time $\tau$ that satisfies
$\E_{s_0,\sigma^i_{\calD^i_*},\sigma^{-i}}[\tau] \geq \tfrac{\rho N_0}{2\ep^2}$ we have
\[ \gamma^i(s_0,\sigma^i_{\calD^i_*},\sigma^{-i};0,\tau) \geq \underline v^i_1(s_0) - 3\ep -4\delta, \]
and therefore for every $\lambda$ sufficiently close to 1,
\[ \gamma^i_\lambda(s_0,\sigma^i_{\calD^i_*},\sigma^{-i}) \geq \underline v^i_1(s_0) - 4\ep - 4\delta. \]
The desired result follows.
\end{proof}

\bigskip

A useful property of uniform $\ep$-max-min $\calD^i_*$-Markovian strategies is that if the expected length of the $k$'th run is high,
then the average expected payoff during the $k$'th run is high.
This observation is summarized in the following Lemma.

\begin{lemma}
\label{lemma:markovian:maxmin}
Let $i \in I$ be a player,
let $\ep > 0$, let $\sigma^i \in \Sigma^i$ be a uniform $\ep$-max-min $\calD^i_*$-Markovian strategy,
and let $N_0 \in \dN$ be the constant given in Definition~\ref{defin:mn:maxmin}.
Then for every initial state $s_0$, every strategy profile $\sigma^{-i} \in \Sigma^{-i}$ of the other players,
and every $k \geq 0$,
on the set $\{ \E_{s_0,\sigma^i,\sigma^{-i}}[\tau^{\calD^i_*}_{k+1} - \tau^{\calD^i*}_{k} \mid \calH(\tau^{\calD^i_*}_{k}) ]\geq N_0\}$ we have
\[ \gamma(s_0,\sigma^i,\sigma^{-i};\tau^{\calD^i_*}_{k},\tau^{\calD^i_*}_{k+1}) \geq \underline v^i_1(s_{\tau^{\calD^i_*}_{k}})-\ep. \]
\end{lemma}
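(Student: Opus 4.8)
The statement is essentially a restatement of the definition of a uniform $\ep$-max-min strategy, applied not to the initial state $s_0$ but to the random state $s_{\tau^{\calD^i_*}_k}$ at the beginning of the $k$'th $\calD^i_*$-run, using the Markovian property to reduce to the former. The key conceptual point is that because $\sigma^i$ is $\calD^i_*$-Markovian, once the play enters the $k$'th run at state $s_{\tau^{\calD^i_*}_k}$, player~$i$ behaves exactly as if a fresh copy of the game had started at that state under $\sigma^i$; and within that run the state stays in a single element of $\calD^i_*$, so the uniform max-min value of every state visited during the run equals $\underline v^i_1(s_{\tau^{\calD^i_*}_k})$.

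First I would condition on a finite history $h_{\tau^{\calD^i_*}_k}$ that reaches the start of the $k$'th run, and consider the conditional process from that stage on. Because $\sigma^i$ is $\calD^i_*$-Markovian, the continuation strategy $(\sigma^i)_{h_{\tau^{\calD^i_*}_k}}$ restricted to stages before $\tau^{\calD^i_*}_{k+1}$ coincides with $\sigma^i$ started afresh at $s_{\tau^{\calD^i_*}_k}$ (the map $\varphi$ strips exactly the past runs). The opponents play some continuation profile $(\sigma^{-i})_{h_{\tau^{\calD^i_*}_k}} \in \Sigma^{-i}$, which is an admissible strategy profile of the other players in the game started at $s_{\tau^{\calD^i_*}_k}$. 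The stopping time $\tau := \tau^{\calD^i_*}_{k+1} - \tau^{\calD^i_*}_k$, viewed in the shifted game, is a bounded stopping time, and on the conditioning event its conditional expectation is at least $N_0$.

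Now I would apply Eq.~(\ref{equ:mn:2}) in Definition~\ref{defin:mn:maxmin}, with initial state $s_{\tau^{\calD^i_*}_k}$, the fresh strategy $\sigma^i$, the opponent continuation profile, and the stopping time $\tau$: this yields
\[ \gamma^i\bigl(s_{\tau^{\calD^i_*}_k}, \sigma^i, (\sigma^{-i})_{h_{\tau^{\calD^i_*}_k}}; 0, \tau\bigr) \geq \underline v^i_1(s_{\tau^{\calD^i_*}_k}) - \ep \]
on the conditioning event. Finally I would observe that the left-hand side, expressed back in the original (unshifted) timeline, is exactly $\gamma^i(s_0,\sigma^i,\sigma^{-i};\tau^{\calD^i_*}_k,\tau^{\calD^i_*}_{k+1})$, since the shift by $\tau^{\calD^i_*}_k$ affects neither the expected average payoff nor the measurability, and then integrate over all conditioning histories $h_{\tau^{\calD^i_*}_k}$ in the set $\{\E_{s_0,\sigma^i,\sigma^{-i}}[\tau^{\calD^i_*}_{k+1}-\tau^{\calD^i_*}_k \mid \calH(\tau^{\calD^i_*}_k)] \geq N_0\}$.

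The only mild subtlety — and the step I would be most careful about — is the translation between the shifted game and the original game: one must check that $(\sigma^i)_{h_{\tau^{\calD^i_*}_k}}$ genuinely equals $\sigma^i$ on the relevant stages because of $\calD^i_*$-Markovianity (this uses that the conditioning history $h_{\tau^{\calD^i_*}_k}$ ends at a stage where the play has just entered a new element of $\calD^i_*$, matching the definition of $\sigma^i_\calD$), and that $\tau^{\calD^i_*}_{k+1}$ measured from $\tau^{\calD^i_*}_k$ is the first exit time of that element, hence coincides with "$\tau^{\calD^i_*}_1$" of the shifted game. Given the earlier development these are routine verifications rather than real obstacles, so the proof is short.
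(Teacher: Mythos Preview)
The paper does not give a proof of this lemma; it is stated as an ``observation'' and left to the reader. Your argument is exactly the intended one: use the $\calD^i_*$-Markovian property to identify the continuation of $\sigma^i$ at stage $\tau^{\calD^i_*}_k$ with a fresh copy of $\sigma^i$ started at $s_{\tau^{\calD^i_*}_k}$, then invoke Eq.~(\ref{equ:mn:2}) from Definition~\ref{defin:mn:maxmin} with that initial state, the opponents' continuation profile, and the stopping time $\tau^{\calD^i_*}_{k+1}-\tau^{\calD^i_*}_k$.

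One small slip: you assert that this stopping time is bounded, but $\tau^{\calD^i_*}_{k+1}$ may equal $+\infty$ (the play need not ever leave the current element of $\calD^i_*$). This is handled in the standard way---apply Eq.~(\ref{equ:mn:2}) to the truncations $\tau^{\calD^i_*}_{k+1}\wedge L$ and let $L\to\infty$; compare the paper's use of bounded $\tau'\le \tau^{\calD^i_*}_{k+1}$ in Eq.~(\ref{equ:10a}). It does not affect the argument.
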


\section{The Modified Stochastic Game: The Max-Min Value}
\label{section:maxmin}

In this section we compare the limit as the discount factor goes to 1 of the max-min value in the modified game
to the uniform max-min value in the original stochastic game.

The max-min value of player~$i$ in the modified game $\widehat\Gamma_\lambda(s_0;\vec\calD,\vec c)$ is
\[ \underline{\widehat v}^i_{\lambda}(s_0;\calD^i,c^i) :=
\max_{\sigma^i \in \Sigma^i} \min_{\sigma^{-i} \in \Sigma^{-i}} \widehat \gamma_\lambda^i(s_0,\sigma;\calD^i,c^i). \]
Since the payoff function $\sigma \mapsto \widehat \gamma_\lambda^i(s_0,\sigma;\calD^i,c^i)$ is continuous (Lemma~\ref{lemma:continuous}),
the max-min value exists.
By Eq.~(\ref{equ:inequality:1}) we have $\widehat\gamma_\lambda^i(s_0,\sigma;\calD^i,c^i) \leq \gamma_\lambda^i(s_0,\sigma)$
for every strategy profile $\sigma \in \Sigma$,
and consequently
\begin{equation}
\label{equ:maxmin:comparison}
\underline{\widehat v}^i_{\lambda}(s_0;\calD^i,c^i) \leq {\underline v}^i_{\lambda}(s_0).
\end{equation}

We do not know whether the function $\lambda \mapsto \underline{\widehat v}^i_\lambda(s_0;\calD^i,c^i)$ is semi-algebraic,
and in particular we do not know whether the limit $\lim_{\lambda \to 1} \underline{\widehat v}^i_{\lambda}(s_0;\calD^i,c^i)$ always exists.
If the limit exists, we denote it by
$\underline {\widehat v}^i_1(s_0;\calD^i,c^i)$
and term it the \emph{limit max-min value} of player~$i$ in the modified game.

As the following example shows, the inequality in Eq.~(\ref{equ:maxmin:comparison}) can be strict,
even if the cutoff vector $c^i$ is high, and the difference between
${\underline v}^i_{\lambda}(s_0)$ and $\underline{\widehat v}^i_{\lambda}(s_0;\calD^i,c^i)$ need not vanish as $\lambda$ goes to 1.

\begin{example}
\label{example:2}
Consider the game that is depicted in Figure \arabic{figurecounter}.
In this game there is a single player, two states $S = \{s^0,s^1\}$, and the player has a single action in each state.
Set $\calD = \bigl\{ \{s^0\}, \{s^1\} \bigr\}$ and $c(D) = 4$ for every $D \in \calD$.

\bigskip

\centerline{
\begin{picture}(40,40)(0,-20)
\put( 0,0){\numbercellongb{$0$}{$^{s^1}$}}
\put( 20,-20){$s^0$}
\end{picture}
\ \ \
\ \ \
\begin{picture}(40,40)(0,-20)
\put( 0,0){\numbercellongb{$6$}{$^{s^0}$}}
\put( 20,-20){$s^1$}
\end{picture}
}

\centerline{Figure \arabic{figurecounter}: The game in Example~\ref{example:2}.}
\addtocounter{figurecounter}{1}
\bigskip

In this game the payoff alternates between 0 and 6, so that for each initial state
the $\lambda$-discounted value of this Markov chain converges to $3$ as
the discount factor $\lambda$ goes to 1.
In particular, the cutoff vector is higher than the $\lambda$-discounted value of the game for every discount factor $\lambda$ sufficiently close to 1.
In the modified game the payoff in state $s^1$ is 4,
so that the value of the modified game converges to $2$ as $\lambda$ goes to 1.
\end{example}

We will now provide conditions that ensure that the limit max-min value of a player in the modified game exists and coincides
with his uniform max-min value in the original game.
To this end we present a property, called \emph{Property P}, that some partitions satisfy.

\subsection{Property P}
\label{section:property:p}

We turn to formally present Property P.
Roughly, a partition $\calD^i$ satisfies Property P w.r.t.~player~$i$
if for every $\ep > 0$ there is a uniform $\ep$-max-min $\calD^i$-Markovian strategy of player~$i$
such that the number of times in which the play switches between elements of $\calD^i$ is uniformly bounded,
over the other players' strategy profile and over $\ep$.

\begin{definition}
\label{def:property:p}
Let $i \in I$ be a player.
A partition $\calD$ of the set of states $S$ \emph{satisfies Property~P w.r.t. player~$i$}
if there is a real number $\zeta > 0$ and for every $\ep > 0$ there is a uniform $\ep$-max-min $\calD$-Markovian strategy $\sigma^i \in \Sigma^i$ such that
for every strategy profile $\sigma^{-i} \in \Sigma^{-i}$ of the other players
there is an event $E$ for which $\prob_{s_0,\sigma^i,\sigma^{-i}}(E) > 1-\ep$ and
\[ \E_{s_0,\sigma^i,\sigma^{-i}}[Z(\cdot;\calD) \cdot \textbf{1}_E] \leq \zeta. \]
\end{definition}

\begin{example}[Absorbing Games]
\label{example:property:p:1}
When the game is an absorbing game,
the state variable can change at most once,
hence the element of the partition that contains the current state changes at most once.
It follows that any partition $\calD$ satisfies Property P w.r.t.~all players, with $\zeta=1$.
\end{example}

\begin{example}
\label{example:property:p:2}
Similarly to Example~\ref{example:property:p:1},
suppose that there is a set $D \subseteq S$ such that
(a) all states that are not in $D$ are absorbing, and
(b) $\calD = \{D, \{s\}_{s \not\in D}\}$.
Then the partition $\calD$ satisfies Property~P w.r.t.~all players, with $\zeta=1$.
In the application in Section~\ref{section:application} we will use this partition.
\end{example}

\begin{example}[The partition $\calD^i_*$]
\label{example:property:p:3}
The partition $\calD^i_*$ of the set of states according to the uniform max-min value of player~$i$
satisfies Property P w.r.t.~player~$i$.
Indeed, by Theorems~\ref{theorem:mn} and~\ref{theorem:weakly:preserving}, for every $\ep > 0$ player~$i$ has a $\calD^i_*$-Markovian
uniform $\ep$-max-min strategy $\sigma^i$,
and therefore by Theorem~\ref{theorem:partition2} this strategy satisfies Property P.
\end{example}

\subsection{A Result in Probability}

We would like to prove that when the partition $\calD^i$ satisfies Property P, and the cutoffs $(c^i(D))_{D \in \calD^i}$
are sufficiently high, then the limit max-min value of a player in the modified game exists and coincides with his uniform max-min value in the original game.
To this end we will show that a uniform $\ep$-max-min $\calD^i$-Markovian strategy $\sigma^i$ of player~$i$ guarantees to him in the modified game at least
$\underline v^i_1(s_0) - 2\ep$.
By Lemma~\ref{lemma:markovian:maxmin},
and because the strategy $\sigma^i$ is uniform $\ep$-max-min $\calD^i$-Markovian,
in every visit to an element of $\calD^i$ whose expected length is high, the expected average payoff is at least $\underline v^i_1(s_0) - \ep$.
Property P assures that the expected number of runs is bounded, hence the expected number of stages that belong to short visits to
elements of $\calD^i$ is uniformly bounded.
In particular, most stages belong to long visits to elements of $\calD^i$.
To show that the payoff in the modified game is high, we need to show that for every element $D$ of $\calD^i$,
the discounted payoff during long visits to $D$ is high.
In this section we provide a result that will help us deduce that when the expected average payoff during long visits to $D$ is high,
so is the expected discounted payoff.

Let $(\Omega,\calF,P)$ be a probability space, let $(\calF_n)_{n \geq 0}$ be a filtration,
and let $(\tau_k)_{k \geq 0}$ be a sequence of stopping times adapted to the filtration $(\calF_n)_{n \geq 0}$ such that $\tau_0= 0$
and $\tau_{k+1} > \tau_k$ on the event $\{\tau_k < \infty\}$, for every $k \geq 0$.
Denote $k_* := \sup_{k \geq 0} \{ k \geq 0 \colon \tau_k < \infty\}$.

For every nonnegative integer $l$ denote by $k(l)$ the unique nonnegative integer that satisfies $\tau_{k(l)} \leq l < \tau_{k(l)+1}$,
and denote $\tau_k^l := \tau_k \wedge l = \min\{\tau_k,l\}$.

\begin{theorem}
\label{theorem:stochastic}
In the notations above,
If there is $\zeta > 0$ such that $\E[k_*] \leq \zeta$,
then for every $\ep > 0$ and every $N \geq 0$ there are $M  = M(\ep,N,\zeta) \geq 0$ and $L_0 = L_0(\ep,N,\zeta) \geq 0$
(both are independent of the probability measure $P$)
that satisfy
\[ \sum_{l=0}^L
P\bigl(
\{ \E[(\tau_{k(l)+1}^L-\tau_{k(l)}^L)\wedge M \mid \calF_{\tau_{k(l)}}] \geq N\}\bigr) \geq (1-\ep)(L+1), \ \ \ \forall L \geq L_0. \]
\end{theorem}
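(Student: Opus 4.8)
The plan is to pass to the complementary event. Writing $G_l := \{\E[(\tau_{k(l)+1}^L-\tau_{k(l)}^L)\wedge M\mid\calF_{\tau_{k(l)}}]\ge N\}$, the desired bound $\sum_{l=0}^L P(G_l)\ge(1-\ep)(L+1)$ is equivalent to $\E\bigl[\sum_{l=0}^L\mathbf{1}_{G_l^c}\bigr]\le\ep(L+1)$, so I will bound the expected number of ``bad'' stages. For each $k\ge0$ set $B_k:=\{\E[(\tau_{k+1}^L-\tau_k^L)\wedge M\mid\calF_{\tau_k}]<N\}$; since $\{\tau_k\le L\}\in\calF_{\tau_k}$ and a conditional expectation given $\calF_{\tau_k}$ is $\calF_{\tau_k}$-measurable, both $B_k$ and $\{\tau_k\le L\}\cap B_k$ lie in $\calF_{\tau_k}$, which I will use for the tower property. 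All stages $l$ in the run $\{\tau_k\le l<\tau_{k+1}\}$ share the same conditioning, so $\sum_{l=0}^L\mathbf{1}_{G_l^c}=\sum_{k\ge0}\#\{l\le L:\tau_k\le l<\tau_{k+1}\}\cdot\mathbf{1}_{B_k}$; comparing $\#\{l\le L:\tau_k\le l<\tau_{k+1}\}$ with $\tau_{k+1}^L-\tau_k^L$ (they coincide except on the unique boundary run that contains $L$, where the count exceeds $\tau_{k+1}^L-\tau_k^L$ by exactly one) yields $\sum_{l=0}^L\mathbf{1}_{G_l^c}\le 1+\sum_{k:\tau_k\le L}(\tau_{k+1}^L-\tau_k^L)\mathbf{1}_{B_k}$.

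Next I would estimate each summand. By the tower property, $\E[(\tau_{k+1}^L-\tau_k^L)\mathbf{1}_{\{\tau_k\le L\}\cap B_k}]=\E\bigl[\mathbf{1}_{\{\tau_k\le L\}\cap B_k}\,\E[\tau_{k+1}^L-\tau_k^L\mid\calF_{\tau_k}]\bigr]$, which is legitimate since $\tau_{k+1}^L-\tau_k^L$ is bounded by $L$. On $B_k$ I split $\tau_{k+1}^L-\tau_k^L$ into its truncation at $M$ and its overshoot: the truncated part is dominated by $(\tau_{k+1}^L-\tau_k^L)\wedge M$, so its conditional mean is less than $N$ on $B_k$; and the conditional Markov inequality applied to the nonnegative variable $(\tau_{k+1}^L-\tau_k^L)\wedge M$ gives $P(\tau_{k+1}^L-\tau_k^L>M\mid\calF_{\tau_k})\le\frac1M\E[(\tau_{k+1}^L-\tau_k^L)\wedge M\mid\calF_{\tau_k}]<\frac NM$ on $B_k$, so, since the overshoot is at most $L$, its conditional mean is at most $LN/M$. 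Hence $\E[\tau_{k+1}^L-\tau_k^L\mid\calF_{\tau_k}]<N\bigl(1+\tfrac LM\bigr)$ on $B_k$.

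Summing over $k$ and using $\sum_{k\ge0}P(\tau_k\le L)=\E[\#\{k:\tau_k\le L\}]\le\E[k_*+1]\le\zeta+1$ — where $\#\{k:\tau_k\le L\}\le k_*+1$ because $\tau_k\le L$ forces $\tau_k<\infty$, i.e.\ $k\le k_*$ — I obtain $\E\bigl[\sum_{l=0}^L\mathbf{1}_{G_l^c}\bigr]\le 1+N\bigl(1+\tfrac LM\bigr)(\zeta+1)$. It remains to fix the constants: choose $M=M(\ep,N,\zeta)$ large enough (and $\ge1$) that $\tfrac{N(\zeta+1)}{M}\le\tfrac\ep2$, so that the $\tfrac LM$ contribution is at most $\tfrac\ep2 L\le\tfrac\ep2(L+1)$; then choose $L_0=L_0(\ep,N,\zeta)$ large enough that the leftover constant $1+N(\zeta+1)$ is at most $\tfrac\ep2(L+1)$ for every $L\ge L_0$. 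Both $M$ and $L_0$ depend only on $\ep,N,\zeta$, not on $P$, and together they give $\E[\sum_{l=0}^L\mathbf{1}_{G_l^c}]\le\ep(L+1)$, which is the assertion.

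The step I expect to take the most care is the first reduction: converting the stagewise sum $\sum_l\mathbf{1}_{G_l^c}$ into a runwise sum when the run index $k(l)$ is itself random, tracking the off-by-one error on the boundary run containing $L$, and verifying the $\calF_{\tau_k}$-measurability needed to condition. The conceptual point that makes the estimate close is the truncation at $M$: it lets a run that is ``short in conditional mean'' be decomposed into a bulk of length $\lesssim M$ (controlled directly by $N$) and a rare overshoot whose contribution, though naively of size $L$, is damped by the factor $N/M$ together with the bound $\zeta+1$ on the expected number of runs, once $M$ is taken of order $1/\ep$.
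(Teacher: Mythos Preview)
Your proposal is correct and follows essentially the same approach as the paper: pass to the complement, use Markov's inequality on $(\tau_{k+1}^L-\tau_k^L)\wedge M$ to bound the probability of overshoot beyond $M$ by $N/M$, and close with the bound $\E[k_*]\le\zeta$ on the expected number of runs. The only cosmetic difference is that the paper splits bad runs into ``short'' ones (length $<M$, total contribution $\le\zeta M$) and ``long but bad'' ones (total probability $\le\ep/2$, contribution $\le\tfrac{\ep}{2}(L+1)$), whereas you bound each bad run's conditional length directly by $N(1+L/M)$ and then sum; the arithmetic and the choice of $M,L_0$ come out the same.
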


For every $k \geq 0$ call the sequence of stages between stages $\tau_k$ and $\tau_{k+1}-1$ a \emph{$\tau$-run}.
The condition that $\E[k_*]$ is finite assures that
on average there are few $\tau$-runs up to stage $L$, hence,
when we restrict attention to stages $\{0,1,2,\cdots,L\}$,
the expected length of $\tau$-runs is large.
Theorem~\ref{theorem:stochastic} states a stronger property: there is $M$ such that
the expected length of most $\tau$-runs, restricted to their first $M$ stages, is high.
Moreover, the constant $M$ and the finite horizon $L_0$ are independent of the probability measure $P$.
In our application, the probability measure $P$ will be determined by the strategies of the players.
Since we study the max-min value, we need the quantities $M$ and $L_0$ to be independent
of the strategies of the other player, hence $M$ and $L_0$ need to be independent of $P$.

The next example shows that the quantity $M$ in Theorem~\ref{theorem:stochastic} can be large when $\zeta$ is large.
\begin{example}
Let $p \in (0,1)$, and consider a coin with parameter $P(\hbox{head}) = p$ that is drawn over and over.
For every $k \geq 0$ let $\tau_k$ be the following stopping time:
\[ \tau_k := \left\{
\begin{array}{lll}
k, & \ \ \ \ \ & \hbox{if the first } k \hbox{ draws are head},\\
\infty, & &  \hbox{otherwise.}
\end{array}
\right. \]
The first draw in which the outcome of the draw is tail is $k_*$ (recall that time starts at 0),
hence $P(k_*=k) = (1-p)p^{k}$, and in particular $\E[k_*] = \tfrac{1}{1-p}$.
Additionally, on the event $\{\tau_k < \infty\}$ we have $\E[(\tau_{k+1}-\tau_k) \wedge \tfrac{N-p}{1-p} \mid \tau_k < \infty] = N$.
It follows that
the constant $M$ in Theorem~\ref{theorem:stochastic} may be at least $\tfrac{N-p}{1-p}$.
\end{example}

\bigskip

\begin{proof}[Proof of Theorem~\ref{theorem:stochastic}]
We will substitute $M = \lceil\tfrac{2\zeta N}{\ep}\rceil$ and $L_0 = \lceil\tfrac{2\zeta M}{\ep}\rceil$,
where $\lceil x \rceil$ is the least integer larger than or equal to the real number $x$.
Denote
\[ G_k := \{ \tau_k < \infty\} \cap \left\{ \tau_{k+1} - \tau_k < M \right\}. \]
Then
\begin{equation}
\label{equ:p}
\zeta \geq \E[k_*]
= \sum_{k=0}^\infty P(\tau_k < \infty)
\geq \sum_{k=0}^\infty P(G_k).
\end{equation}
Since $\tau^L_{k+1} - \tau^L_k \leq \tau_{k+1}-\tau_k$, and since on $G_k$ we have
$\tau_{k+1} - \tau_k < M$,
it follows that for every $L \geq 0$,
\begin{equation}
\label{equ:g1}
\E\left[ \sum_{k=0}^\infty (\tau^L_{k+1} - \tau^L_k) \mathbf{1}_{G_k} \right] < M \sum_{k=0}^\infty P(G_k) \leq \zeta M.
\end{equation}
Denote
\[ E_k := \{ \tau_k < \infty\} \cap \left\{ \E\left[(\tau_{k+1}-\tau_k)\wedge  M \mid \calF_{\tau_k}\right] < N\right\}. \]
By Markov's Inequality, on $E_k$ we have
\[ P(\tau_{k+1}-\tau_k <  M \mid \calF_{\tau_k}) = P((\tau_{k+1}-\tau_k)\wedge M < M \mid \calF_{\tau_k})
> 1 - \tfrac{N}{M} \geq 1-\tfrac{\ep}{2\zeta}. \]
This implies that
\[ P(E_k \cap G_k) \geq (1-\tfrac{\ep}{2\zeta})P(E_k), \]
which subsequently implies that
\begin{equation}
\label{equ:26.1}
P(E_k \setminus G_k) \leq \tfrac{\ep}{2\zeta} P(E_k).
\end{equation}
As in Eq.~(\ref{equ:p}) we have $\sum_{k=0}^\infty P(E_k) \leq \zeta$,
and therefore by Eq.~(\ref{equ:26.1})
\[ \sum_{k=0}^\infty P(E_k \setminus G_k) \leq \tfrac{\ep}{2\zeta}\sum_{k=0}^\infty P(E_k) \leq \tfrac{\ep}{2}. \]
Since $\sum_{k=0}^\infty (\tau^L_{k+1} - \tau^L_k) = L+1$,
\begin{equation}
\label{equ:d1}
\E\left[ \sum_{k=0}^\infty (\tau^L_{k+1} - \tau^L_k) \mathbf{1}_{E_k \setminus G_k} \right] \leq \tfrac{\ep}{2} (L+1).
\end{equation}
From Eqs.~(\ref{equ:g1}) and~(\ref{equ:d1}) we obtain that
\begin{eqnarray*}
L+1 &=& \E\left[ \sum_{k=0}^\infty (\tau^L_{k+1} - \tau^L_k) \right] \\
&=&
\E\left[ \sum_{k=0}^\infty (\tau^L_{k+1} - \tau^L_k) \mathbf{1}_{(E_k)^c}\right] +
\E\left[ \sum_{k=0}^\infty (\tau^L_{k+1} - \tau^L_k) \mathbf{1}_{E_k \cap G_k}\right] +
\E\left[ \sum_{k=0}^\infty (\tau^L_{k+1} - \tau^L_k) \mathbf{1}_{E_k \setminus G_k}\right]\\
&\leq&
\E\left[ \sum_{k=0}^\infty (\tau^L_{k+1} - \tau^L_k) \mathbf{1}_{(E_k)^c}\right] +
\E\left[ \sum_{k=0}^\infty (\tau^L_{k+1} - \tau^L_k) \mathbf{1}_{G_k}\right] +
\E\left[ \sum_{k=0}^\infty (\tau^L_{k+1} - \tau^L_k) \mathbf{1}_{E_k \setminus G_k}\right]\\
&\leq&
\E\left[ \sum_{k=0}^\infty (\tau^L_{k+1} - \tau^L_k) \mathbf{1}_{(E_k)^c}\right] +
\zeta M + \tfrac{\ep}{2} (L+1)\\
&\leq&
\E\left[ \sum_{k=0}^\infty (\tau^L_{k+1} - \tau^L_k) \mathbf{1}_{(E_k)^c}\right] + \ep (L+1).
\end{eqnarray*}
The result follows since
\[ \E\left[ \sum_{k=0}^\infty (\tau^L_{k+1} - \tau^L_k)  \mathbf{1}_{(E_k)^c} \right]
=
\sum_{l=0}^L
P\bigl(
\{ \E[(\tau_{k(l)+1}^L-\tau_{k(l)}^L)\wedge M \mid \calF_{\tau_{k(l)}}] \geq N\}\bigr).
\]
\end{proof}

\subsection{Bounding the Max-Min Value in the Modified Game}

Our goal in this section is to show that when the partition $\calD^i$ satisfies Property P w.r.t.~player~$i$
and the cutoff $c^i(D)$ is at least the uniform max-min value in all states in $D$, for every element $D \in \calD^i$,
then the limit max-min value of player~$i$ in the modified game
exists and is equal to the uniform max-min value of the initial state.

\begin{theorem}
\label{theorem:5}
Let $\calD^i$ be a partition of the set of states $S$ that satisfies Property P w.r.t.~player~$i$
and let $c^i \in \dR^{\calD^i}$ satisfy $c^i(D) \geq \underline v_1^i(s)$ for every element $D \in \calD^i$ and every state $s \in D$.
Then for every initial state $s_0 \in S$
the limit $\widehat {\underline v}^i_1(s_0;\calD^i,c^i) := \lim_{\lambda \to 1} \underline{\widehat v}^i_{\lambda}(s_0;\calD^i,c^i)$ exists
we have $\widehat {\underline v}^i_1(s_0;\calD^i,c^i) = \underline v^i_1(s_0)$.
\end{theorem}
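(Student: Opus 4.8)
The plan is to establish the two bounds $\limsup_{\lambda\to1}\underline{\widehat v}^i_\lambda(s_0;\calD^i,c^i)\le\underline v^i_1(s_0)$ and $\liminf_{\lambda\to1}\underline{\widehat v}^i_\lambda(s_0;\calD^i,c^i)\ge\underline v^i_1(s_0)$, which together give both the existence of the limit and its value. The upper bound is immediate from Eq.~(\ref{equ:maxmin:comparison}): $\underline{\widehat v}^i_\lambda(s_0;\calD^i,c^i)\le\underline v^i_\lambda(s_0)$ for every $\lambda$, and $\underline v^i_\lambda(s_0)\to\underline v^i_1(s_0)$. So the whole work is the lower bound: fixing $\ep>0$, I would exhibit one strategy $\sigma^i$ with $\widehat\gamma^i_\lambda(s_0,\sigma^i,\sigma^{-i};\calD^i,c^i)\ge\underline v^i_1(s_0)-O(\ep)$ for all $\sigma^{-i}$ and all $\lambda$ close to $1$, then let $\lambda\to1$ and $\ep\to0$. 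For $\sigma^i$ I would take the uniform $\ep$-max-min $\calD^i$-Markovian strategy furnished by Property~P — the hypothesis that makes the play eventually settle inside one element of $\calD^i$ — and I would want it to be additionally $(\calD^i,\ep)$-max-min preserving and to restart, on each $\calD^i$-run, a uniform $\ep$-max-min, $\ep$-max-min preserving strategy $\rho^i$ (which exists by Theorem~\ref{theorem:mn}); this extra structure is used only in the final step, and arranging it is the delicate point discussed below.

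The core is a run decomposition. Index the $\calD^i$-runs by $k$; let $D_k\in\calD^i$ be the element visited on run $k$, $\tau_k=\tau^{\calD^i}_k$ its start, $s_{\tau_k}\in D_k$ the state it is entered at, $w_k$ its (random) $\lambda$-discounted weight, and $\bar u_k$ the normalized $\lambda$-discounted payoff of player~$i$ during the run. Grouping runs by element and using $\min\{\sum_k a_k,\sum_k b_k\}\ge\sum_k\min\{a_k,b_k\}$ inside each element gives
\[ \widehat\gamma^i_\lambda(s_0,\sigma^i,\sigma^{-i};\calD^i,c^i)\ \ge\ \E_{s_0,\sigma^i,\sigma^{-i}}\!\Big[\sum_k w_k\,\min\{\bar u_k,\,c^i(D_k)\}\Big]. \]
Since $s_{\tau_k}\in D_k$, the hypothesis on $c^i$ yields $c^i(D_k)\ge\underline v^i_1(s_{\tau_k})\ge-1$, so $\min\{\bar u_k,c^i(D_k)\}\ge-1$ always, and on a \emph{long} run, i.e.~one with $\E[\tau_{k+1}-\tau_k\mid\calH(\tau_k)]\ge N_0$ ($N_0$ as in Definition~\ref{defin:mn:maxmin}), the fact that $\sigma^i$ restarts $\rho^i$ inside the run together with Eq.~(\ref{equ:mn:2}) (this is the argument of Lemma~\ref{lemma:markovian:maxmin}) gives $\bar u_k\ge\underline v^i_1(s_{\tau_k})-\ep$, up to the $O((1-\lambda)M)$ error between discounted and undiscounted averages on a run truncated at length $M$; hence $\min\{\bar u_k,c^i(D_k)\}\ge\underline v^i_1(s_{\tau_k})-\ep$ there.

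Next I would make short runs negligible. For each $\sigma^{-i}$, Property~P supplies an event $E$ with $\prob_{s_0,\sigma^i,\sigma^{-i}}(E)>1-\ep$ and $\E[Z(\cdot;\calD^i)\mathbf{1}_E]\le\zeta$; conditioning on $E$ and applying Theorem~\ref{theorem:stochastic} to the sequence $(\tau^{\calD^i}_k)$ with $2\zeta$ for $\zeta$ and $N=N_0$ gives $M,L_0$ \emph{independent of $\sigma^{-i}$} such that for every $L\ge L_0$ the uniform average over $\{0,\dots,L\}$ of the probability of lying in a long run is $\ge1-\ep$. Expressing the geometric law as the mixture $(1-\lambda)^2\sum_N\lambda^N(N+1)\cdot\mathrm{Unif}\{0,\dots,N\}$, concentrated for $\lambda$ near $1$ on $N\ge L_0$, transfers this to the $\lambda$-discounted weight, so that, with the $O(\ep)$ weight on $E^c$ added in, the $\lambda$-discounted weight of stages in short runs is $O(\ep)$. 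Therefore $\sum_k w_k\min\{\bar u_k,c^i(D_k)\}\ge\sum_k w_k\,\underline v^i_1(s_{\tau_k})-O(\ep)$ (bounding the summand by $\underline v^i_1(s_{\tau_k})-\ep$ on long runs, by $-1$ on short ones, and using $|\underline v^i_1|\le1$). Finally $\E[\sum_k w_k\,\underline v^i_1(s_{\tau_k})]=(1-\lambda)\sum_n\lambda^n\,\E[\underline v^i_1(s_{\tau_{k(n)}})]$ with $\tau_{k(n)}\le n$ the start of the run containing stage $n$; iterating the $(\calD^i,\ep)$-max-min preservation of $\sigma^i$ over run boundaries, exactly as in the derivation of Eq.~(\ref{equ:313}), gives $\E[\underline v^i_1(s_{\tau_{k(n)}})]\ge\underline v^i_1(s_0)$ for each $n$, hence $\E[\sum_k w_k\,\underline v^i_1(s_{\tau_k})]\ge\underline v^i_1(s_0)$, and so $\widehat\gamma^i_\lambda(s_0,\sigma^i,\sigma^{-i};\calD^i,c^i)\ge\underline v^i_1(s_0)-O(\ep)$, as desired.

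I expect the main obstacle to be the interaction between the caps and the run-entry values. Lemma~\ref{lemma:markovian:maxmin} bounds the payoff on a long run from below only by $\underline v^i_1$ of the state the run begins at, so without a guarantee that these run-entry values do not drift down in $\lambda$-discounted expectation a minimizing opponent could steer the play into low-value elements while the caps silently discard high payoffs elsewhere — this is exactly the mechanism in Example~\ref{example:2}, which is ruled out only because its partition fails Property~P. Making the run-entry max-min values into an approximate submartingale is what forces a $(\calD^i,\ep)$-max-min preserving strategy, so the real issue is to produce a single strategy that is simultaneously $\calD^i$-Markovian, $(\calD^i,\ep)$-max-min preserving, a within-run restart of a uniform $\ep$-max-min strategy, and subject to the Property~P bound on the number of $\calD^i$-switches; I would do this by the device already used for $\calD^i_*$ in Example~\ref{example:property:p:3}, taking $\sigma^i=\rho^i_{\calD^i}$ with $\rho^i$ uniform $\ep$-max-min and $\ep$-max-min preserving and verifying that the switch bound of Property~P transfers to it. A secondary technicality, and the reason Theorem~\ref{theorem:stochastic} was stated with $M,L_0$ independent of $P$, is that the discounted-versus-finite-horizon passage must be uniform over $\sigma^{-i}$.
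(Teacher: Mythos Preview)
Your proposal is correct and follows essentially the same route as the paper's proof: the upper bound via Eq.~(\ref{equ:maxmin:comparison}); for the lower bound, the run-by-run decomposition of $\widehat\gamma^i_\lambda$ using the subadditivity of $\min$ (the paper's Eq.~(\ref{equ:min})), Lemma~\ref{lemma:markovian:maxmin} together with the discounted-to-undiscounted passage of Eq.~(\ref{equ:501a}) on long runs, Theorem~\ref{theorem:stochastic} combined with Property~P to make short runs negligible, and a max-min--preserving argument to control the weighted average of the run-entry values $\underline v^i_1(s_{\tau_k})$.

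You are in fact more careful than the paper on the one delicate point. The paper takes the strategy furnished by Property~P and, at Eq.~(\ref{equ:585}), simply writes ``since $\sigma^i$ is $\delta$-max-min preserving'' --- a property that Property~P, as stated in Definition~\ref{def:property:p}, does not include. You correctly isolate this as the real obstacle and propose the construction $\sigma^i=\rho^i_{\calD^i}$ with $\rho^i$ from Theorem~\ref{theorem:mn}, which is automatically $(\calD^i,\ep)$-max-min preserving; the residual issue --- that the switch bound of Property~P is asserted only for \emph{some} strategy and must be checked for this particular one --- is exactly the gap the paper leaves implicit, and you flag it rather than gloss over it.
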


To prove Theorem~\ref{theorem:5} we will show that any uniform $\ep$-max-min $\calD^i$-Markovian strategy $\sigma^i_{\calD,\ep}$ guarantees
approximately $\underline v^i_1(s_0)$
in the modified game $\widehat \Gamma_\lambda(s_0;\calD,c)$, for every discount factor $\lambda$ sufficiently close to 1.

For the proof of the theorem we will need two notations and an observation.
For every discount factor $\lambda \in [0,1)$, every strategy profile $\sigma$, and every two bounded stopping times $0 \leq \tau \leq \tau'$
denote the expected unnormalized discounted time between the stopping times $\tau$ and $\tau'$ by
\[ t_\lambda(s_0,\sigma; \tau,\tau') :=
\E_{s_0,\sigma}\left[\sum_{n=\tau}^{\tau'-1} \lambda^{n-\tau} \mid \calH(\tau)\right]. \]
Denote the expected normalized discounted payoff between the stopping times $\tau$ and $\tau'$ by
\begin{equation}
\label{equ:180}
\gamma^i_\lambda(s_0,\sigma; \tau,\tau')
:= \frac{\E_{s_0,\sigma}\left[\sum_{n=\tau}^{\tau'-1} \lambda^{n-\tau} u^i(s_n,a_n) \mid \calH(\tau)\right]}
{t_\lambda(s_0,\sigma; \tau,\tau')}.
\end{equation}
The quantities $t_\lambda(s_0,\sigma; \tau,\tau')$ and
$\gamma^i_\lambda(s_0,\sigma; \tau,\tau')$
are random variables that depend on the information at stage $\tau$.

As is well known, the discounted payoff can be presented as a convex combination of the average payoffs.
We will use the following relation,
which relates the partial discounted sum to arithmetic sums,
and holds for every $\lambda \in [0,1)$, every $L \in \dN$, every sequence of real numbers $(x_n)_{n=0}^L$, and every $M \leq L$:
\begin{eqnarray}
\label{equ:501}
\sum_{n=0}^L\lambda^n x_n
= \sum_{n=0}^{M-1} (\lambda^n-\lambda^M)x_n +
\sum_{l=M}^{\infty} \left(((L \wedge l)+1)(\lambda^l - \lambda^{l+1}) \frac{\sum_{n=0}^{L \wedge l} x_n}{(L \wedge l)+1}\right).
\end{eqnarray}
Moreover,
when $M$ is fixed, the ratio
$\frac{\sum_{n=0}^{M-1} (\lambda^n-\lambda^M)}{\sum_{n=0}^L\lambda^n}$
of the sum of the coefficients in the first term on the right-hand side of Eq.~(\ref{equ:501})
and the sum of the coefficients on the left-hand side
goes to 0 as $\lambda$ goes to 1.
When replacing the nonnegative integer $L$ by a stopping time $\tau$, Eq.~(\ref{equ:501}) translates to
\begin{eqnarray}
\label{equ:501a}
\E\left[\sum_{n=0}^\tau\lambda^n x_n\right]
= \E\left[\sum_{n=0}^{(\tau \wedge M)-1} (\lambda^n-\lambda^{\tau\wedge M})x_n\right] +
\sum_{l=M}^\infty (\lambda^l - \lambda^{l+1})\E[((\tau\wedge l)+1)] \frac{\E[\sum_{n=0}^{\tau\wedge l} x_n]}{\E[(\tau\wedge l)+1]}.
\end{eqnarray}
Moreover, the ratio
$\frac{\E\left[\sum_{n=0}^{(\tau\wedge M)-1} (\lambda^n-\lambda^{\tau\wedge M})\right]}{\E\left[\sum_{n=0}^\tau\lambda^n\right]}$
goes to 0 as $\lambda$ goes to 1.

\bigskip

\begin{proof}
In view of Eq.~(\ref{equ:maxmin:comparison}) we need to show that $\widehat {\underline v}^i_1(s_0;\calD^i,c^i) \geq \underline v^i_1(s_0)$.
Fix $\delta > 0$.
Let $\zeta$ be the constant of Definition~\ref{def:property:p},
let $\ep > 0$ be the constant given by Theorem~\ref{theorem:weakly:preserving},
let $\sigma^i$ be the uniform $\ep$-max-min $\calD^i$-Markovian strategy for player~$i$ given by Property P,
and let $\sigma^{-i}$ be any strategy profile of the other players.
We will show that the strategy $\sigma^i$ guarantees that
player~$i$'s payoff in the modified game $\widehat\Gamma_\lambda(s_0;\calD^i,c^i)$
is at least $\underline v^i_1(s_0) - 5\ep-2\delta$,
provided the discount factor is sufficiently close to 1.

Let $N_0$ be the constant given by Theorem~\ref{theorem:mn},
and let $M$ and $L_0$ be the constant given by Theorem~\ref{theorem:stochastic} for $\zeta$
and $N=N_0$.
Let $\lambda$ be sufficiently close to 1 such that the ratio
$\frac{\E\left[\sum_{n=0}^{(\tau\wedge M)-1} (\lambda^n-\lambda^{\tau\wedge M})\right]}{\E\left[\sum_{n=0}^\tau\lambda^n\right]}$
is smaller than $\ep$.

We argue that on the event
$\left\{ \E_{s_0,\sigma^i_{\calD,\ep},\sigma^{-i}}[\tau^\calD_{k+1} - \tau^\calD_k \mid \calH(\tau^\calD_k)] \geq N_0 \right\}$
we have
\begin{equation}
\label{equ:30.1}
\gamma^i_\lambda(s_0,\sigma; \tau^\calD_k,\tau^\calD_{k+1}-1) \geq \underline v^i_1(s_{\tau^\calD_k}) - 3\ep.
\end{equation}
Indeed, on this event
\begin{eqnarray}
\nonumber
&&\gamma^i_\lambda(s_0,\sigma; \tau^{\calD^i}_k,\tau^{\calD^i}_{k+1}-1)\\
\label{equ:181}
&&=
\frac{\E_{s_0,\sigma}\left[\sum_{n=\tau^{\calD^i}_k}^{\tau^{\calD^i}_{k+1}-1} \lambda^{n-\tau^{\calD^i}_k} u^i(s_n,a_n) \mid \calH(\tau^{\calD^i}_{k})\right]}
{t_\lambda(s_0,\sigma; \tau^{\calD^i}_k,\tau^{\calD^i}_{k+1})}\\
\label{equ:182}
&&= \frac{\E\left[\sum_{n=0}^{(\tau^{\calD^i}_{k} \wedge M)-1} (\lambda^n-\lambda^{\tau\wedge M})u^i(s_n,a_n) \mid \calH(\tau^{\calD^i}_k)\right]}
{t_\lambda(s_0,\sigma; \tau^{\calD^i}_k,\tau^{\calD^i}_{k+1})}\\
\nonumber
&&\ \  +
\frac{\sum_{l=M}^\infty (\lambda^l - \lambda^{l+1})\E[((\tau^{\calD^i}_k\wedge l)+1)] \gamma^i(s_0,\sigma^i,\sigma^{-i}; \tau^{\calD^i}_{k},\tau^{\calD^i}_{k+1})}
{t_\lambda(s_0,\sigma; \tau^{\calD^i}_k,\tau^{\calD^i}_{k+1})}\\
\label{equ:183}
&&\geq
\sum_{l=M}^\infty (\lambda^l - \lambda^{l+1})\E[((\tau^{\calD^i}_k\wedge l)+1)] \gamma^i(s_0,\sigma^i,\sigma^{-i}; \tau^{\calD^i}_{k},\tau^{\calD^i}_{k+1})-\ep\\
\label{equ:184}
&&\geq
(\underline v^i_1(s_0) - \ep)\sum_{l=M}^\infty (\lambda^l - \lambda^{l+1})\E[((\tau^{\calD^i}_k\wedge l)+1)]  - \ep\\
\label{equ:185}
&&\geq
(\underline v^i_1(s_0) - \ep)(1-\ep)-\ep\\
\label{equ:186}
&&\geq \underline v^i_1(s_0) - 3\ep,
\end{eqnarray}
where Eq.~(\ref{equ:181}) holds by definition (see Eq.~(\ref{equ:180})),
Eq.~(\ref{equ:182}) holds by Eq.~(\ref{equ:501a}),
Eq.~(\ref{equ:183}) holds by the choice of $\lambda$,
Eq.~(\ref{equ:184}) holds by the choice of $M$,
and Eq.~(\ref{equ:185}) holds by the choice of $\lambda$.

Finally, the payoff in the modified game satisfies
\begin{eqnarray}
\nonumber
&&\widehat \gamma^i_\lambda(s_0,\sigma^i,\sigma^{-i};\calD^i,c^i)\\
&&\ \ =
\label{equ:582}
\E_{s_0,\sigma^i,\sigma^{-i}}
\left[ \sum_{D \in \calD^i}
\min\left\{U^i_\lambda(s_0,\sigma^i,\sigma^{-i};D), t_\lambda(s_0,\sigma^i,\sigma^{-i};D)\cdot  c^i(D)\right\} \right]\\
\label{equ:583}
&&\ \ \geq
\sum_{D \in \calD^i} \sum_{k=0}^\infty
\E_{s_0,\sigma^i,\sigma^{-i}}
\left[
\mathbf{1}_{\{\tau^{\calD^i}_k < \infty\} \cap \{D(s_{\tau^{\calD^i}_k}) = D\}} \cdot
\lambda^{\tau^{\calD^i}_k-1} \cdot\right.\\
\nonumber
&&\ \ \ \ \ \ \ \ \ \ \ \ \ \ \ \ \ \ \ \ \
\left.
t_\lambda(s_0,\sigma; \tau^{\calD^i}_k,\tau^{\calD^i}_{k+1})\min\{\gamma^i_\lambda(s_0,\sigma^i,\sigma^{-i}; \tau^{\calD^i}_k,\tau^{\calD^i}_{k+1}), c^i(D)\}
\right]
\\
\nonumber
&&\ \ \geq
\sum_{D \in \calD^i} \sum_{k=0}^\infty
\E_{s_0,\sigma^i,\sigma^{-i}}
\left[
\mathbf{1}_{\{\tau^{\calD^i}_k < \infty\} \cap \{D(s_{\tau^{\calD^i}_k}) = D\}}
t_\lambda(s_0,\sigma^i,\sigma^{-i}; \tau^{\calD^i}_k,\tau^{\calD^i}_{k+1}) \cdot (\underline v^i_1(s_{\tau^{\calD^i}_k})-3\ep)
\right]\\
&&\ \ \ \ \ \ \ \ \ \ \ \ \ \ \ \ \ \ \ \ \ -2\ep
\label{equ:584}
\\
&&\ \ \geq
\underline v^i_1(s_0)-5\ep-2\delta.
\label{equ:585}
\end{eqnarray}
where
Eq.~(\ref{equ:582}) holds by the definition of the modified payoff,
Eq.~(\ref{equ:583}) follows holds since
\begin{equation}
\label{equ:min}
\min\{a_1,b_1\} + \min\{a_2,b_2\} \leq \min\{a_1+a_2,b_1+b_2\}, \ \ \ \forall a_1,a_2,b_1,b_2\in \dR,
\end{equation}
Eq.~(\ref{equ:584}) holds by Theorem~\ref{theorem:stochastic}, Eq.~(\ref{equ:30.1}), and
since $c^i(D) \geq \underline v^i_1(s)$ for every element $D \in \calD^i$ and every state $s \in D$,
and Eq.~(\ref{equ:585}) holds since $\sigma^i$ is $\delta$-max-min preserving.
\end{proof}

\section{The Modified Game: the Min-Max Value}
\label{section:minmax}

In Section~\ref{section:maxmin} we studied the max-min value in the modified game.
In the present section we derive the analogous results with regards to the min-max value.
The proofs of these results are similar to the proofs regarding the max-min value, hence omitted.
The reason we started with the study of the max-min value is that while player~$i$ has one strategy that guarantees that his payoff
is at least his max-min value minus $\ep$, whatever the other players play,
this is not the case for the min-max value:
for every strategy profile of the other players, player~$i$ may have a different strategy that guarantees
that his payoff is at least the uniform min-max value minus $\ep$.
This feature of the concept of the min-max value has the implication that definitions and proofs are more cumbersome,
yet they pose no new technical difficulties.

The \emph{$\lambda$-discounted min-max value of player $i$ at the initial state $s_0$} in the original stochastic game is given by
\begin{equation}
\label{equ:minmax}
\overline v_\lambda^i(s_0) := \min_{\sigma^{-i} \in \Sigma^{-i}} \max_{\sigma^i \in \Sigma^i} \gamma_\lambda^i(s_0;\sigma^i,\sigma^{-i}).
\end{equation}
The interpretation of the min-max value is that the other players can ensure that player~$i$'s payoff will not be above his min-max value,
and they cannot lower his payoff further.
Denote
\[ \overline v_1^i(s_0) := \lim_{\lambda \to 1} \overline v_\lambda^i(s_0), \]
which is termed the \emph{uniform min-max value} of player~$i$ at the initial state $s_0$.

Neyman (2003) proved that for every strategy profile of players $I \setminus \{i\}$ player~$i$
has a response that guarantees that his payoff is at least the uniform min-max value minus $\ep$,
provided the discount factor is sufficiently close to 1 or the game is sufficiently long.
We will need this strategy to satisfy an additional condition,
which resembles the concept of being Markovian w.r.t.~a partition.
This condition is complicated to formulate, since the strategy profile of players $I \setminus \{i\}$ need not be Markovian w.r.t.~the partition.
We therefore spell out directly the properties that the response of player~$i$ should satisfy.

Denote by $\calD^i_{**}$ the partition of the set of states $S$ according to the uniform min-max value of player~$i$:
two states $s,s' \in S$ are in the same element of the partition if and only if $\overline v^i_1(s) = \overline v^i_1(s')$.
The next result follows from Neyman (2003) together with the analog of Theorem~\ref{theorem:weakly:preserving}.

\begin{theorem}
\label{theorem:mn1}
For every $\ep > 0$,
every player $i \in I$,
and every initial state $s_0 \in S$
there is $N_0 \in \dN$
such that for every strategy profile $\sigma^{-i} \in \Sigma^{-i}$
there is a strategy $\sigma^i$ such that
for every $N \geq N_0$,
\begin{eqnarray}
\gamma_N^i(s_0,\sigma^i,\sigma^{-i}) &\geq& \overline v^i_1(s_0)- \ep.
\label{equ:mn:15}
\end{eqnarray}
Moreover, for every $k \geq 0$ and every bounded stopping time $\tau \geq \tau^{\calD^i_{**}}_k$,
\begin{equation*}
\overline v^i_1(s_{\tau^{\calD^i_{**}}_k}) \leq
\E_{s_{\tau^{\calD^i_{**}}_k},\sigma^i_{h_{\tau^{\calD^i_{**}}_k}},\sigma^{-i}_{h_{\tau^{\calD^i_{**}}_k}}}[\overline v^i_1(s_\tau)] + \ep,
\end{equation*}
and on the event $\left\{\E_{s_0,\sigma^i,\sigma^{-i}}[\tau - \tau^{\calD^i_{**}}_k \mid \calH(\tau^{\calD^i_{**}}_k)] \geq N_0\right\}$ we have
\begin{equation}
\label{equ:mn:12}
\gamma^i(s_0,\sigma^i,\sigma^{-i};\tau^{\calD^i_{**}}_k,\tau) \geq \overline v^i_1(s_{\tau^{\calD^i_{**}}_k})- \ep.
\end{equation}
\end{theorem}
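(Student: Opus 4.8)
The plan is to obtain $\sigma^i$ by the min-max counterpart of the construction of Section~\ref{section:max:min}, using the partition $\calD^i_{**}$ in place of $\calD^i_*$. The starting ingredient is the min-max analog of Theorem~\ref{theorem:mn}, which follows from Neyman (2003): for every $\ep>0$ and every player~$i$ there is $N_0\in\dN$ --- which we may even take uniform over all states, since $S$ is finite --- such that for every state $s$ and every $\sigma^{-i}\in\Sigma^{-i}$ there is a response $\psi^i(s,\sigma^{-i})$ that is at once a uniform $\ep$-min-max response from $s$, i.e.\ $\gamma^i_N(s,\psi^i,\sigma^{-i})\geq\overline v^i_1(s)-\ep$ for every $N\geq N_0$, and $\ep$-min-max preserving, i.e.\ $\overline v^i_1(s_\sigma)\leq\E_{s_\sigma,\psi^i_{h_\sigma},\sigma^{-i}_{h_\sigma}}[\overline v^i_1(s_{\sigma'})]+\ep$ for all plays $h$ and bounded stopping times $\sigma<\sigma'$. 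Because $\sigma^{-i}$ need not be Markovian, $\sigma^i$ cannot be obtained by literally replaying one such $\psi^i$ on each run; instead I would define it recursively, so that at the start of the $k$-th $\calD^i_{**}$-run, say along a realized history $h$ ending at $s_{\tau^{\calD^i_{**}}_k}$, player~$i$ plays from that stage onward the response $\psi^i(s_{\tau^{\calD^i_{**}}_k},\sigma^{-i}_{h})$ to the \emph{current continuation} of $\sigma^{-i}$, until the next switch between elements of $\calD^i_{**}$.

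With this $\sigma^i$ the run-level inequality~(\ref{equ:mn:12}) is immediate: on the event that the conditionally expected length of the $k$-th run is at least $N_0$, player~$i$ plays a fresh uniform $\ep$-min-max response from the state $s_{\tau^{\calD^i_{**}}_k}$, and the $N$-stage guarantee is converted into the stopping-time-average guarantee by the computation used in the proof of Theorem~\ref{theorem:5}, via the arithmetic-sum version of an identity like Eq.~(\ref{equ:501a}). The value-comparison inequality is trivial when $\tau$ lies in the same $\calD^i_{**}$-run as $\tau^{\calD^i_{**}}_k$, since $\overline v^i_1$ is constant along a run; for a general bounded $\tau\geq\tau^{\calD^i_{**}}_k$ one chains the within-run instances of the $\ep$-min-max-preserving property of the $\psi^i$'s, so that the accumulated error is $\ep$ times the expected number of $\calD^i_{**}$-switches strictly before $\tau$.

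That number is controlled by the min-max analog of Theorem~\ref{theorem:partition2}: for any $(\calD^i_{**},\ep)$-min-max-preserving response against $\sigma^{-i}$ there is an event of probability exceeding $1-\delta$ on which it has bounded expectation, while on the complementary event $\overline v^i_1$ is in any case bounded by $1$; this analog is proved verbatim, with $W_n:=\overline v^i_1(s_n)+\ep_0 k(\calD^i_{**};n)$ a submartingale (now using $\E[\overline v^i_1(s_{n+1})\mid\calH(n)]\geq\overline v^i_1(s_n)-\ep$, jumps of size $0$ or at least $\rho-\ep_0$, and the downcrossing bound). Finally~(\ref{equ:mn:15}) follows as in the min-max form of Theorem~\ref{theorem:weakly:preserving}: a bounded expected number of runs forces all but $O(N_0)$ stages in expectation to belong to runs of conditionally expected length $\geq N_0$, on each of which~(\ref{equ:mn:12}) applies, while the value-comparison inequality keeps every run-opening value within $O(\ep)$ of $\overline v^i_1(s_0)$ in expectation; averaging over $n\in\{0,\dots,N-1\}$ and taking $N$ large yields $\gamma^i_N(s_0,\sigma^i,\sigma^{-i})\geq\overline v^i_1(s_0)-\ep$ after a harmless re-scaling of the starting $\ep$.

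The main obstacle is less a single estimate than the bookkeeping forced by the dependence of $\sigma^i$ on $\sigma^{-i}$: one must check that Neyman's horizon $N_0$ can be taken uniformly over all continuation games (immediate from finiteness of $S$ and of the action sets), that restarting against $\sigma^{-i}_{h}$ at every $\calD^i_{**}$-boundary leaves the chained $\ep$-min-max-preserving inequalities valid, and --- most delicately --- that the aggregate switching error remains $O(\ep)$ rather than growing with the horizon, which is exactly where the min-max analog of Theorem~\ref{theorem:partition2} is indispensable. A subsidiary point is to phrase the recursive definition of $\sigma^i$ carefully enough that the per-run appeal to Neyman's theorem is legitimate, noting that the move after $h_n$ depends on $h_n$ through both the current-run play $\varphi(h_n)$ and the history that opened the current run.
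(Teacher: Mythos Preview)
Your proposal is correct and matches the paper's approach: the paper does not give a detailed proof of Theorem~\ref{theorem:mn1} at all, stating only that it ``follows from Neyman (2003) together with the analog of Theorem~\ref{theorem:weakly:preserving},'' and explicitly noting that the min-max proofs are omitted because they are analogous to the max-min ones but ``more cumbersome'' due to the dependence of $\sigma^i$ on $\sigma^{-i}$. Your recursive restart of a Neyman-type response against the current continuation $\sigma^{-i}_h$ at each $\calD^i_{**}$-boundary, together with the min-max analogs of Theorems~\ref{theorem:partition2} and~\ref{theorem:weakly:preserving} to control switching and aggregate the run-level guarantees, is exactly the intended fleshing-out of that one-line reference.
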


Note that Eq.~(\ref{equ:mn:12}) requires some sort of uniform subgame perfectness:
the strategy profile $\sigma^i$ is a good response in every subgame that starts when the play switches an element of the partition $\calD^i_{**}$.
We will call a strategy $\sigma^i$ that satisfies Eqs.~(\ref{equ:mn:15})--(\ref{equ:mn:12}) for the partition $\calD^i_{**}$
a \emph{uniform $\calD^i_{**}$-subgame-perfect $\ep$-min-max strategy against $\sigma^{-i}$}.

The analog of Property P for the min-max value is the following.

\begin{definition}
\label{def:property:ptag}
Let $i \in I$ be a player.
A partition $\calD$ of the set of states $S$ \emph{satisfies Property~P' w.r.t.~player $i$}
if there is a real number $\zeta > 0$ and for every $\ep > 0$
and every strategy profile $\sigma^{-i} \in \Sigma^{-i}$ there
are a uniform $\calD$-subgame-perfect $\ep$-min-max strategy $\sigma^i \in \Sigma^i$ against $\sigma^{-i}$
and an event $E$ such that $\prob_{s_0,\sigma^i,\sigma^{-i}}(E) \geq 1-\ep$ and
\begin{eqnarray*}
\E_{s_0,\sigma^i,\sigma^{-i}}[Z \cdot \mathbf{1}_E] &\leq& \zeta.
\end{eqnarray*}
\end{definition}

The partitions in Examples~\ref{example:property:p:1} and~\ref{example:property:p:2} satisfy Property~P' w.r.t.~all players.
Analogously to Example~\ref{example:property:p:3},
for every player $i \in I$ the partition~$\calD^i_{**}$ satisfies Property P' w.r.t. player~$i$.

Denote by
\[ \widehat {\overline v}^i_\lambda(s_0;\calD^i,c^i) :=
\min_{\sigma^{-i} \in \Sigma^{-i}} \max_{\sigma^i \in \Sigma^i} \gamma_\lambda^i(s_0;\sigma^i,\sigma^{-i}) \]
the min-max value of player~$i$ in the modified game $\widehat\Gamma^i_\lambda(s_0;\vec\calD,\vec c)$.
The following result, which is analogous to Theorem~\ref{theorem:5},
states that if the partition $\calD^i$ satisfies Property P' and if the vector $c^i \in \dR^{\calD^i}$ is at least the min-max value,
then the limit min-max value of player~$i$ in the modified game is his uniform min-max value at the initial state in the original stochastic game.
\begin{theorem}
\label{theorem:6}
Let $\calD^i$ be a partition of the set of states $S$ that satisfies Property P' w.r.t.~player~$i$
and let $c^i \in \dR^{\calD^i}$ satisfy $c^i(D) \geq \overline v^i_1(s)$ for every element $D \in \calD^i$ and every state $s \in D$.
Then for every initial state $s_0 \in S$
the limit $\widehat {\overline v}^i_1(s_0;\calD^i,c^i) := \lim_{\lambda \to 1} \widehat {\overline v}^i_\lambda(s_0;\calD^i,c^i)$ exists
and satisfies
$\widehat {\overline v}^i_1(s_0;\calD^i,c^i) = \overline v^i_1(s_0)$.
\end{theorem}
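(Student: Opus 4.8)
The plan is to run, essentially verbatim, the proof of Theorem~\ref{theorem:5}, but with $\underline v^i_1$ replaced by $\overline v^i_1$, the partition $\calD^i_*$ by $\calD^i_{**}$, Property~P by Property~P', and Lemma~\ref{lemma:markovian:maxmin} and Theorem~\ref{theorem:weakly:preserving} by Theorem~\ref{theorem:mn1}. As in the max-min case, the easy half is immediate: Eq.~(\ref{equ:inequality:1}) gives $\widehat\gamma^i_\lambda(s_0,\sigma;\calD^i,c^i)\le\gamma^i_\lambda(s_0,\sigma)$ for every $\sigma$, whence $\widehat{\overline v}^i_\lambda(s_0;\calD^i,c^i)\le\overline v^i_\lambda(s_0)$ (the min-max analog of Eq.~(\ref{equ:maxmin:comparison})), and therefore $\limsup_{\lambda\to1}\widehat{\overline v}^i_\lambda(s_0;\calD^i,c^i)\le\overline v^i_1(s_0)$. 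So the content is the reverse inequality for the $\liminf$. Since the min-max value is about how well player~$i$ can react to a \emph{given} profile of the others, I would phrase the target as: for every $\ep>0$ there is $\lambda(\ep)<1$, depending only on $\ep$, $i$ and $s_0$, such that for every $\sigma^{-i}\in\Sigma^{-i}$ some response $\sigma^i$ satisfies $\widehat\gamma^i_\lambda(s_0,\sigma^i,\sigma^{-i};\calD^i,c^i)\ge\overline v^i_1(s_0)-O(\ep)$ for all $\lambda\in(\lambda(\ep),1)$; taking $\max_{\sigma^i}$, then $\min_{\sigma^{-i}}$, then $\ep\to0$ yields $\liminf_{\lambda\to1}\widehat{\overline v}^i_\lambda(s_0;\calD^i,c^i)\ge\overline v^i_1(s_0)$, and combined with the easy half the limit exists and equals $\overline v^i_1(s_0)$.

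To produce $\sigma^i$, fix $\ep>0$ and an arbitrary $\sigma^{-i}$, let $\zeta$ be the constant of Definition~\ref{def:property:ptag}, and let $\sigma^i$ be the uniform $\calD^i$-subgame-perfect $\ep$-min-max strategy against $\sigma^{-i}$ together with the event $E$ with $\prob_{s_0,\sigma^i,\sigma^{-i}}(E)\ge1-\ep$ and $\E_{s_0,\sigma^i,\sigma^{-i}}[Z(\cdot;\calD^i)\cdot\mathbf 1_E]\le\zeta$ supplied by Property~P'. Take $N_0$ from Theorem~\ref{theorem:mn1} (it depends only on $\ep$, $i$, $s_0$), take $M$ and $L_0$ from Theorem~\ref{theorem:stochastic} applied with this $\zeta$ and $N=N_0$ --- these are independent of the probability measure, hence of $\sigma^{-i}$ --- and choose $\lambda(\ep)$ so that for $\lambda>\lambda(\ep)$ the ratio displayed just after Eq.~(\ref{equ:501a}) is below $\ep$. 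The first step is the run-level estimate: on the event $\{\E_{s_0,\sigma^i,\sigma^{-i}}[\tau^{\calD^i}_{k+1}-\tau^{\calD^i}_k\mid\calH(\tau^{\calD^i}_k)]\ge N_0\}$, one rewrites the $\lambda$-discounted payoff over the $k$th $\calD^i$-run, via Eq.~(\ref{equ:501a}), as a negligible head (killed by the choice of $\lambda(\ep)$) plus a convex combination of average payoffs over suffixes of the run, and bounds the latter below by $\overline v^i_1(s_{\tau^{\calD^i}_k})-\ep$ using the subgame-perfectness inequality Eq.~(\ref{equ:mn:12}); this gives the normalized discounted payoff over the $k$th run at least $\overline v^i_1(s_{\tau^{\calD^i}_k})-3\ep$, exactly as in the chain of inequalities in the proof of Theorem~\ref{theorem:5}.

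The second step lower-bounds the modified payoff. Expand $\widehat\gamma^i_\lambda(s_0,\sigma^i,\sigma^{-i};\calD^i,c^i)$ as a sum over $D\in\calD^i$ and over $\calD^i$-runs starting in $D$, using the superadditivity of the minimum, Eq.~(\ref{equ:min}), to push $\min\{\cdot,c^i(D)\}$ inside the sum over runs. On a long run beginning at a state $s=s_{\tau^{\calD^i}_k}\in D$ the cutoff does not bind to our disadvantage, because $c^i(D)\ge\overline v^i_1(s)$ forces $\min\{\text{run payoff},c^i(D)\}\ge\overline v^i_1(s)-3\ep$ in both cases; Theorem~\ref{theorem:stochastic}, applied to the law conditioned on $E$ (under which $\E[Z(\cdot;\calD^i)]\le\zeta/(1-\ep)$), shows that all but an $\ep$-fraction of the discounted weight sits on such long runs, while the restriction to $E$ and the uniform bound on payoffs in the modified game (which lies in $[-1,1]$ since $c^i(D)\ge\overline v^i_1(s)\ge-1$) cost a further $O(\ep)$. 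Finally, iterating over the runs the min-max-preservation inequality of Theorem~\ref{theorem:mn1} (the display between Eqs.~(\ref{equ:mn:15}) and~(\ref{equ:mn:12})) and using $\E_{s_0,\sigma^i,\sigma^{-i}}[Z(\cdot;\calD^i)\cdot\mathbf 1_E]\le\zeta$ shows the expected discounted-weighted average of $\overline v^i_1(s_{\tau^{\calD^i}_k})$ over runs is at least $\overline v^i_1(s_0)-O(\ep)$; assembling the three estimates gives $\widehat\gamma^i_\lambda(s_0,\sigma^i,\sigma^{-i};\calD^i,c^i)\ge\overline v^i_1(s_0)-O(\ep)$.

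I expect the only genuinely new point relative to Theorem~\ref{theorem:5} to be bookkeeping around the dependence on $\sigma^{-i}$: one must check that $N_0$, $M$, $L_0$ and the threshold $\lambda(\ep)$ are fixed \emph{before} $\sigma^{-i}$ is revealed --- which is precisely why Theorem~\ref{theorem:stochastic} was stated with constants independent of the underlying probability measure and why Theorem~\ref{theorem:mn1} produces $N_0$ independent of $\sigma^{-i}$ --- and that the conditional bound $\E[Z(\cdot;\calD^i)\cdot\mathbf 1_E]\le\zeta$, rather than an unconditional one, is enough, which is handled by working throughout on $E$ and absorbing $\prob_{s_0,\sigma^i,\sigma^{-i}}(E^c)\le\ep$ into the $O(\ep)$ error via the boundedness of the modified payoff. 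Everything else is a line-by-line transcription of the max-min argument.
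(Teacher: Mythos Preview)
Your proposal is correct and matches the paper's own approach: the paper explicitly states that the proofs in Section~\ref{section:minmax} are ``similar to the proofs regarding the max-min value, hence omitted,'' and singles out exactly the point you flag --- that $\sigma^i$ now depends on $\sigma^{-i}$ --- as making the argument ``more cumbersome'' but posing ``no new technical difficulties.'' Your handling of that dependence (fixing $N_0$, $M$, $L_0$, $\lambda(\ep)$ before $\sigma^{-i}$ is revealed, using that Theorem~\ref{theorem:stochastic} gives constants independent of $P$ and Theorem~\ref{theorem:mn1} gives $N_0$ independent of $\sigma^{-i}$, and absorbing $\prob(E^c)\le\ep$ into the error) is exactly the bookkeeping the paper alludes to.
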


When particularized to absorbing games, Theorem~\ref{theorem:6} coincides with Step~2 in the proof of Theorem~4.5 in Solan (1999).

Since the payoff to a player in an equilibrium is always at least his min-max value,
Theorem~\ref{theorem:6} yields that the limit of stationary equilibrium payoffs of each player~$i$ in the modified game
is at least his uniform min-max value in the original stochastic game at the initial state.

\begin{corollary}
\label{corollary:equilibria:minmax}
Fix an initial state $s_0 \in S$,
a vector of partitions $\vec\calD = (\calD^i)_{i \in I}$ of the set of states such that for every player $i \in I$
the partition $\calD^i$ satisfies Property P' w.r.t. player~$i$,
and a vector $\vec c = (c^i(D))_{i \in I, D \in \calD^i} \in \dR^{I \times\calD}$
such that $c^i(D) \geq \overline v^i_1(s)$ for every player $i \in I$, every element $D \in \calD^i$, and every state $s \in D$.
Let $\lambda \mapsto x_\lambda$ be a function that assigns to every discount factor $\lambda \in [0,1)$
a stationary equilibrium in the modified game $\widehat \Gamma_\lambda(s_0;\vec \calD,\vec c)$.
Then $\liminf_{\lambda \to 1} \widehat\gamma^i_\lambda(s_0,x_\lambda;\calD^i,c^i) \geq \overline v^i_1(s_0)$.
\end{corollary}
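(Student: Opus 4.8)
The plan is to obtain the corollary immediately from Theorem~\ref{theorem:6}, combined with the elementary game-theoretic fact that in any Nash equilibrium a player's payoff is at least his min-max value; the stationarity of the equilibria $x_\lambda$ plays no role in the argument.

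First I would fix a player $i \in I$ and a discount factor $\lambda \in [0,1)$. Since $x_\lambda$ is an equilibrium of the normal-form game $\widehat\Gamma_\lambda(s_0;\vec\calD,\vec c)$, the strategy $x_\lambda^i$ is a best response of player~$i$ against $x_\lambda^{-i}$ in that game; using that a best response exists (Lemma~\ref{lemma:br}, or merely Lemma~\ref{lemma:continuous} together with compactness of $\Sigma^i$), we get
\[ \widehat\gamma^i_\lambda(s_0,x_\lambda;\calD^i,c^i) \;\geq\; \max_{\sigma^i\in\Sigma^i}\widehat\gamma^i_\lambda(s_0,\sigma^i,x_\lambda^{-i};\calD^i,c^i) \;\geq\; \min_{\sigma^{-i}\in\Sigma^{-i}}\,\max_{\sigma^i\in\Sigma^i}\widehat\gamma^i_\lambda(s_0,\sigma^i,\sigma^{-i};\calD^i,c^i) \;=\; \widehat{\overline v}^i_\lambda(s_0;\calD^i,c^i). \]

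Next I would pass to the limit. By hypothesis every $\calD^i$ satisfies Property~P' w.r.t.~player~$i$ and $c^i(D) \geq \overline v^i_1(s)$ for every $D \in \calD^i$ and every $s \in D$, so Theorem~\ref{theorem:6} applies and gives that $\lim_{\lambda\to1}\widehat{\overline v}^i_\lambda(s_0;\calD^i,c^i)$ exists and equals $\overline v^i_1(s_0)$. Taking $\liminf_{\lambda\to1}$ in the displayed inequality and using monotonicity of $\liminf$ (together with the fact that the lower envelope converges), we obtain
\[ \liminf_{\lambda\to1}\widehat\gamma^i_\lambda(s_0,x_\lambda;\calD^i,c^i) \;\geq\; \lim_{\lambda\to1}\widehat{\overline v}^i_\lambda(s_0;\calD^i,c^i) \;=\; \overline v^i_1(s_0), \]
which is the asserted inequality.

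I do not expect a genuine obstacle here: essentially all the substance sits in Theorem~\ref{theorem:6}, which the corollary is designed to package. The only point deserving a word of care is the first displayed inequality, namely that the equilibrium payoff dominates the min-max value of the modified game; this is the standard unilateral-deviation argument, legitimate in the present setting because $\widehat\gamma^i_\lambda(s_0,\cdot;\calD^i,c^i)$ is a genuine payoff function of player~$i$'s strategy and the other players' profile and, by Lemma~\ref{lemma:br}, the inner maximum defining $\widehat{\overline v}^i_\lambda(s_0;\calD^i,c^i)$ is attained.
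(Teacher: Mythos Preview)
Your proposal is correct and matches the paper's own reasoning: the paper does not give a separate proof of the corollary but simply notes, just before stating it, that ``the payoff to a player in an equilibrium is always at least his min-max value,'' and then invokes Theorem~\ref{theorem:6}. Your write-up spells this out carefully; the first inequality in your display is in fact an equality (since $x_\lambda^i$ is itself a feasible $\sigma^i$), but the direction you use is all that is needed.
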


\section{The Min-Max Value in Stationary Strategies}
\label{section:minmax:stationary}

In discounted stochastic games the min-max (resp.~max-min) value of a player when all players are restricted to stationary strategies
coincides with his min-max (resp. max-min) value without this restriction.
In this section (resp. Section~\ref{section:maxmin:stationary}) we study the min-max (resp. max-min)
value in the modified game when the players are restricted to stationary strategies,
and check whether the above mentioned phenomenon holds in the modified game.
The motivation for this study is that if one can prove that there exists a stationary strategy profile $x_\lambda$ that guarantees that the payoff of each player
in the modified game is at least
his min-max value (resp.~max-min value), whatever be the initial state,
and if the limit min-max value (resp.~max-min value) in stationary strategies in the modified game when the players are restricted to stationary strategies
coincides with his uniform min-max value (resp.~uniform max-min value) in the original stochastic game,
then we may be able to derive results on the original stochastic game using the sequence $(x_\lambda)_{\lambda \in [0,1)}$.

The min-max value of player~$i$ in the modified game
$\widehat\Gamma_\lambda(s_0;\vec \calD,\vec c)$ when the players are
restricted to stationary strategies is
\begin{equation}
\label{equ:minmax:stationary}
\widehat{\overline v}^i_{\lambda,\hbox{stat}}(s_0;\calD^i,c^i) :=
\min_{x^{-i} \in \Sigmamistatft} \max_{x^i \in \Sigmaistatft} \widehat \gamma_\lambda^i(s_0,x^i,x^{-i};\calD^i,c^i).
\end{equation}
Since the payoff function $\widehat\gamma_\lambda^i(s_0,\cdot;\calD^i,c^i) : \Sigma \to \dR$ is continuous, the min-max value
in stationary strategies is well-defined.
In Eq.~(\ref{equ:minmax:stationary}), the maximum and minimum are over the set of stationary strategies of the players, hence both
are smaller than the sets considered for the calculation of the min-max value $\overline v_\lambda^i(s_0;\calD^i,c^i)$.
Consequently, there is no clear comparison between the min-max value and the min-max value in
stationary strategies of the modified game.
Nevertheless we have the following result.

\begin{theorem}
\label{theorem:minmax:comparison}
For every discount factor $\lambda \in [0,1)$,
every initial state $s_0 \in S$,
every player $i\in I$,
every partition $\calD^i$ of the set of states,
and every vector $c^i \in \dR^{\calD^i}$,
\begin{equation}
\label{equ:100}
\widehat {\overline v}^i_\lambda(s_0;\calD^i,c^i) \leq
\widehat{\overline v}^i_{\lambda,\hbox{stat}}(s_0;\calD^i,c^i)
\leq \overline v^i_\lambda(s).
\end{equation}
\end{theorem}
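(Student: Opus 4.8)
The plan is to prove the two inequalities in Eq.~(\ref{equ:100}) separately; each reduces to a result already available, so no fresh Markov-decision analysis is needed. For the right-hand inequality $\widehat{\overline v}^i_{\lambda,\hbox{stat}}(s_0;\calD^i,c^i) \le \overline v^i_\lambda(s_0)$, I would start from the classical fact recalled at the opening of this section: the $\lambda$-discounted min-max value of the original game is attained in stationary strategies, so there is a stationary profile $x^{-i}_* \in \Sigmamistatft$ of players~$-i$ with $\gamma_\lambda^i(s_0;\sigma^i,x^{-i}_*) \le \overline v^i_\lambda(s_0)$ for every $\sigma^i \in \Sigma^i$, and a fortiori for every stationary $x^i \in \Sigmaistatft$. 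Substituting this particular $x^{-i}_*$ into the minimum in Eq.~(\ref{equ:minmax:stationary}) bounds $\widehat{\overline v}^i_{\lambda,\hbox{stat}}(s_0;\calD^i,c^i)$ from above by $\max_{x^i \in \Sigmaistatft}\widehat\gamma^i_\lambda(s_0,x^i,x^{-i}_*;\calD^i,c^i)$; applying Eq.~(\ref{equ:inequality:1}) termwise then replaces $\widehat\gamma^i_\lambda$ by $\gamma^i_\lambda$, and the resulting maximum is $\le \overline v^i_\lambda(s_0)$ by the choice of $x^{-i}_*$. This step uses nothing beyond the pointwise inequality $\widehat\gamma^i_\lambda \le \gamma^i_\lambda$ and stationary optimality in discounted problems.

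For the left-hand inequality $\widehat{\overline v}^i_\lambda(s_0;\calD^i,c^i) \le \widehat{\overline v}^i_{\lambda,\hbox{stat}}(s_0;\calD^i,c^i)$, the one substantive ingredient is Lemma~\ref{lemma:br}: when players~$-i$ use a stationary profile $x^{-i} \in \Sigmamistatft$, player~$i$ has a stationary strategy that maximizes his modified payoff over \emph{all} of his strategies, so that $\max_{x^i \in \Sigmaistatft}\widehat\gamma^i_\lambda(s_0,x^i,x^{-i};\calD^i,c^i) = \max_{\sigma^i \in \Sigma^i}\widehat\gamma^i_\lambda(s_0,\sigma^i,x^{-i};\calD^i,c^i)$. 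Hence $\widehat{\overline v}^i_{\lambda,\hbox{stat}}(s_0;\calD^i,c^i) = \min_{x^{-i} \in \Sigmamistatft}\max_{\sigma^i \in \Sigma^i}\widehat\gamma^i_\lambda(s_0,\sigma^i,x^{-i};\calD^i,c^i)$, and since this outer minimum now runs only over $\Sigmamistatft \subseteq \Sigma^{-i}$ it is at least $\min_{\sigma^{-i} \in \Sigma^{-i}}\max_{\sigma^i \in \Sigma^i}\widehat\gamma^i_\lambda(s_0,\sigma^i,\sigma^{-i};\calD^i,c^i) = \widehat{\overline v}^i_\lambda(s_0;\calD^i,c^i)$.

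I do not anticipate a real obstacle: the theorem is essentially a bookkeeping consequence of Lemma~\ref{lemma:br}, Eq.~(\ref{equ:inequality:1}), and the textbook stationarity facts. The only point to state with care is that, in contrast to the original discounted game, one cannot symmetrically collapse player~$i$'s maximization to stationary strategies on the general-strategy side of the left-hand inequality; but this is not needed, since Lemma~\ref{lemma:br} already performs the stationary best-response reduction that lets the inclusion $\Sigmamistatft \subseteq \Sigma^{-i}$ do the rest.
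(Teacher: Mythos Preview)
Your proposal is correct and follows essentially the same approach as the paper: both inequalities are proved by the same chain of reductions, using the pointwise bound $\widehat\gamma^i_\lambda \le \gamma^i_\lambda$ (Eq.~(\ref{equ:inequality:1})) together with stationary optimality in the original discounted game for the right-hand inequality, and the stationary best-response reduction against stationary opponents for the left-hand inequality. The only cosmetic difference is that the paper cites Theorem~\ref{proposition:stationary:mdp} directly for the equality $\max_{x^i \in \Sigmaistatft}\widehat\gamma^i_\lambda = \max_{\sigma^i \in \Sigma^i}\widehat\gamma^i_\lambda$, whereas you cite its consequence Lemma~\ref{lemma:br}; either citation works.
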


\begin{proof}
The left-hand side inequality in Eq.~(\ref{equ:100}) holds due to the following chain of equalities and inequalities:
\begin{eqnarray}
\widehat{\overline v}^i_{\lambda,\hbox{stat}}(s_0;\calD^i,c^i) &=&
\min_{x^{-i} \in \Sigmamistatft} \max_{x^i \in \Sigmaistatft} \widehat \gamma_\lambda^i(s_0,x^i,x^{-i};\calD^i,c^i)\\
\label{equ:301}
&=& \min_{x^{-i} \in \Sigmamistatft} \max_{\sigma^i \in \Sigma^i} \widehat \gamma_\lambda^i(s_0,\sigma^i,x^{-i};\calD^i,c^i)\\
&\geq& \min_{\sigma^{-i} \in \Sigma^{-i}} \max_{\sigma^i \in \Sigma^i} \widehat \gamma_\lambda^i(s_0,\sigma^i,\sigma^{-i};\calD^i,c^i)\\
&=& \widehat {\overline v}^i_\lambda(s_0;\calD^i,c^i),
\end{eqnarray}
where Eq.~(\ref{equ:301}) follows from Theorem~\ref{proposition:stationary:mdp}.

We turn to prove the right-hand side inequality in Eq.~(\ref{equ:100}).
Because the discounted min-max value in the original stochastic game is attained in stationary strategies
and by Eq.~(\ref{equ:inequality:1}) we have
\begin{eqnarray*}
\overline v^i_\lambda(s_0)
&=& \min_{x^{-i} \in \Sigmamistatft} \max_{x^i \in \Sigmaistatft} \gamma^i_\lambda(s_0,x^i,x^{-i})\\
&\geq& \min_{x^{-i} \in \Sigmamistatft} \max_{x^i \in \Sigmaistatft} \widehat\gamma^i_\lambda(s_0,x^i,x^{-i};\calD^i,c^i)\\
&=& \widehat{\overline v}^i_{\lambda,\hbox{stat}}(s_0;\calD^i,c^i),
\end{eqnarray*}
as claimed.
\end{proof}

\bigskip

By taking the limits as $\lambda$ goes to 1 in Eq.~(\ref{equ:100})
and by using Theorem~\ref{theorem:6} we obtain the following result.
\begin{corollary}
\label{theorem:minmax:equality}
For every initial state $s_0 \in S$,
every player $i\in I$,
every partition $\calD^i$ of the set of states that satisfies Property P' w.r.t. player~$i$,
and every vector $c^i \in \dR^{\calD^i}$ that satisfies that $c^i(D) \geq \overline v^i_1(s)$ for every element $D \in \calD^i$ and every state $s \in D$,
we have
\begin{equation*}
\lim_{\lambda \to 1} \widehat{\overline v}^i_{\lambda,\hbox{stat}}(s_0;\calD^i,c^i)
=
\lim_{\lambda \to 1} \widehat {\overline v}^i_\lambda(s_0;\calD^i,c^i).
\end{equation*}
\end{corollary}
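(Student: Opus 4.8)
The plan is to sandwich the quantity $\widehat{\overline v}^i_{\lambda,\hbox{stat}}(s_0;\calD^i,c^i)$ between two functions of $\lambda$ whose limits as $\lambda \to 1$ are already known to coincide, and then conclude by the squeeze theorem. All the real work has been done in Theorems~\ref{theorem:minmax:comparison} and~\ref{theorem:6}; the corollary is a short limiting argument.

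First I would invoke Theorem~\ref{theorem:minmax:comparison}, which gives, for every discount factor $\lambda \in [0,1)$,
\[ \widehat {\overline v}^i_\lambda(s_0;\calD^i,c^i) \leq \widehat{\overline v}^i_{\lambda,\hbox{stat}}(s_0;\calD^i,c^i) \leq \overline v^i_\lambda(s_0). \]
Next, under the standing hypotheses of the corollary (the partition $\calD^i$ satisfies Property P' w.r.t.\ player~$i$, and $c^i(D) \geq \overline v^i_1(s)$ for every element $D \in \calD^i$ and every state $s \in D$), Theorem~\ref{theorem:6} asserts that the limit $\widehat {\overline v}^i_1(s_0;\calD^i,c^i) := \lim_{\lambda \to 1} \widehat {\overline v}^i_\lambda(s_0;\calD^i,c^i)$ exists and equals $\overline v^i_1(s_0)$. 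Since $\overline v^i_1(s_0) = \lim_{\lambda \to 1} \overline v^i_\lambda(s_0)$ by definition (this limit exists because $\lambda \mapsto \overline v^i_\lambda(s_0)$ is semi-algebraic), both the left-hand bound $\widehat {\overline v}^i_\lambda(s_0;\calD^i,c^i)$ and the right-hand bound $\overline v^i_\lambda(s_0)$ converge to the common value $\overline v^i_1(s_0)$ as $\lambda \to 1$.

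Finally, the squeeze theorem applied to the displayed chain of inequalities yields that $\widehat{\overline v}^i_{\lambda,\hbox{stat}}(s_0;\calD^i,c^i)$ converges as $\lambda \to 1$, and that its limit equals $\overline v^i_1(s_0) = \widehat {\overline v}^i_1(s_0;\calD^i,c^i) = \lim_{\lambda \to 1} \widehat {\overline v}^i_\lambda(s_0;\calD^i,c^i)$, which is precisely the asserted equality. I do not anticipate any obstacle: the argument is entirely routine once Theorems~\ref{theorem:minmax:comparison} and~\ref{theorem:6} are in hand, and the only thing worth noting in passing is that this common limit is in fact the uniform min-max value $\overline v^i_1(s_0)$ of player~$i$ in the original stochastic game.
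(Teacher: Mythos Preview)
Your proposal is correct and matches the paper's own argument essentially verbatim: the paper simply says that the corollary follows by taking the limit as $\lambda \to 1$ in the inequality of Theorem~\ref{theorem:minmax:comparison} and invoking Theorem~\ref{theorem:6}. Your write-up spells out the squeeze argument a bit more explicitly, but the approach is identical.
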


\section{The Max-Min Value in Stationary Strategies}
\label{section:maxmin:stationary}

In this section we define the concept of max-min value in stationary strategies in the modified game,
and show that its behavior is different than the behavior of the min-max value in stationary strategies.

The max-min value of player~$i$ in the modified game $\widehat\Gamma_\lambda(s_0;\vec\calD,\vec c)$
when the players are restricted to stationary strategies is
\begin{equation*}
\underline{\widehat v}^i_{\lambda,\hbox{stat}}(s_0;\calD^i,c^i) :=
\max_{x^i \in \Sigmaistatft} \min_{x^{-i} \in \Sigmamistatft} \widehat \gamma_\lambda^i(s_0,x^i,x^{-i};\calD^i,c^i).
\end{equation*}

We first show that the max-min value is not lowered when the other players are allowed to play any strategy profile.

\begin{theorem}
\label{theorem:maxmin:1}
For every discount factor $\lambda \in [0,1)$,
every initial state $s_0 \in S$,
every player $i\in I$,
every partition $\calD^i$ of the set of states,
and every vector $c^i \in \dR^{\calD^i}$,
\begin{equation*}
\underline{\widehat v}^i_{\lambda,\hbox{stat}}(s_0;\calD^i,c^i) =
\max_{x^i \in \Sigmaistatft} \min_{\sigma^{-i} \in \Sigma^{-i}} \widehat \gamma_\lambda^i(s_0,x^i,\sigma^{-i};\calD^i,c^i).
\end{equation*}
\end{theorem}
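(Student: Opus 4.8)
The plan is to fix a stationary strategy $x^i \in \Sigmaistatft$ of player~$i$ and prove
\[ \min_{\sigma^{-i} \in \Sigma^{-i}} \widehat\gamma_\lambda^i(s_0,x^i,\sigma^{-i};\calD^i,c^i) = \min_{x^{-i} \in \Sigmamistatft} \widehat\gamma_\lambda^i(s_0,x^i,x^{-i};\calD^i,c^i); \]
taking the maximum over $x^i \in \Sigmaistatft$ then yields the theorem. The inequality ``$\leq$'' is immediate since $\Sigmamistatft \subseteq \Sigma^{-i}$, so the content is the reverse inequality: the opponents cannot push player~$i$'s modified payoff below what they can already achieve with stationary product profiles. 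Continuity of $\widehat\gamma_\lambda^i$ (Lemma~\ref{lemma:continuous}) together with compactness guarantees that both minima are attained.

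The argument rests on two structural observations. First, $\widehat\gamma_\lambda^i(s_0,\sigma;\calD^i,c^i)$ is a \emph{concave} function of the state--action discounted time vector $t_\lambda(s_0,\sigma)$: by Eq.~(\ref{equ:105}) and Eq.~(\ref{equ:payoff:2}) it is a sum over $D \in \calD^i$ of the minimum of two linear functionals of $t_\lambda(s_0,\sigma)$, hence piecewise linear and concave in $t_\lambda(s_0,\sigma)$. Second, since $x^i$ is stationary the joint occupation measure factorizes: for every $\sigma^{-i}$,
\[ t_\lambda(s_0,x^i,\sigma^{-i};s,(a^i,a^{-i})) = x^i(s;a^i)\cdot t^{-i}_\lambda(s_0,\sigma^{-i};s,a^{-i}), \]
where $t^{-i}_\lambda(s_0,\sigma^{-i};\cdot)$ is the discounted state--action occupation measure of $\sigma^{-i}$ in the Markov decision process $M^{-i}$ obtained by fixing $x^i$ and viewing the players $-i$ as a single controller choosing action profiles in $A^{-i}(s) := \times_{j \neq i} A^j(s)$. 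The right-hand side is linear in $t^{-i}_\lambda$, so $\sigma^{-i} \mapsto \widehat\gamma_\lambda^i(s_0,x^i,\sigma^{-i};\calD^i,c^i)$ depends on $\sigma^{-i}$ only through $t^{-i}_\lambda(s_0,\sigma^{-i})$ and is a concave function of it.

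Next I would invoke the standard polyhedral (linear-programming) description of discounted MDPs, as in Altman (1999): the set $\{t^{-i}_\lambda(s_0,\sigma^{-i}) \colon \sigma^{-i} \in \Sigma^{-i}\}$ is contained in the polytope $P^{-i}$ cut out by nonnegativity and the discounted flow equations, and every vertex of $P^{-i}$ is the occupation measure of a \emph{pure stationary} policy of $M^{-i}$. A concave function attains its minimum over a polytope at a vertex, so $\min_{\sigma^{-i} \in \Sigma^{-i}} \widehat\gamma_\lambda^i(s_0,x^i,\sigma^{-i};\calD^i,c^i)$ is at least its value at some vertex $\mu^*$ of $P^{-i}$. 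The decisive point is that a pure stationary policy of $M^{-i}$ is nothing but a map $s \mapsto (\bar a^j(s))_{j \neq i}$ with $\bar a^j(s) \in A^j(s)$, that is, a \emph{pure stationary profile} $\bar x^{-i} \in \Sigmamistatft$ of the individual players, and $t^{-i}_\lambda(s_0,\bar x^{-i})$ equals $\mu^*$. Hence
\[ \min_{\sigma^{-i} \in \Sigma^{-i}} \widehat\gamma_\lambda^i(s_0,x^i,\sigma^{-i};\calD^i,c^i) \geq \widehat\gamma_\lambda^i(s_0,x^i,\bar x^{-i};\calD^i,c^i) \geq \min_{x^{-i} \in \Sigmamistatft} \widehat\gamma_\lambda^i(s_0,x^i,x^{-i};\calD^i,c^i), \]
which is the required reverse inequality.

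The step I expect to be the main obstacle is exactly the one where the naive approach fails. One would like to apply Theorem~\ref{proposition:stationary:mdp} to replace $\sigma^{-i}$ by an equivalent stationary strategy in $M^{-i}$; but that strategy is a priori \emph{correlated} across the players $j \neq i$ (a map into $\Delta(A^{-i}(s))$, not into $\times_{j \neq i}\Delta(A^j(s))$) and so need not belong to $\Sigmamistatft$. It is the concavity of the modified payoff in the occupation measure that lets one push the minimizer all the way to a vertex, where the correlation disappears because the policy is pure; establishing this concavity and the vertex characterization of $P^{-i}$ (with the usual care about states that are never reached, where the policy may be defined arbitrarily) are the points that require attention.
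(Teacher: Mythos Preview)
Your proposal is correct and follows essentially the same route as the paper. The paper states that Theorem~\ref{theorem:maxmin:1} follows from Lemma~\ref{lemma:maxmin:1}, whose proof treats the opponents as a single decision maker, invokes Altman (1999, Theorem~3.2) to write the occupation measure of $\sigma^{-i}$ as a convex combination of those of pure stationary profiles, and then uses the subadditivity of $\min$ (Eq.~(\ref{equ:min})) to conclude; your argument is the same idea phrased geometrically via the polytope of discounted occupation measures and the fact that a concave function attains its minimum at a vertex, and you correctly identify that purity of the vertex policy is what eliminates the correlation issue.
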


Theorem~\ref{theorem:maxmin:1} follows from the following result,
which states that when player~$i$ plays a stationary strategy,
the lowest payoff of player~$i$ in the modified game is obtained when
players $I \setminus \{i\}$ play a pure stationary strategy profile.

\begin{lemma}
\label{lemma:maxmin:1}
For every discount factor $\lambda \in [0,1)$,
every initial state $s_0 \in S$,
every player $i\in I$,
every partition $\calD^i$ of the set of states,
every vector $c^i \in \dR^{\calD^i}$,
every stationary strategy $x^i \in \Sigmaistat$,
and every strategy profile $\sigma^{-i} \in \Sigma^{-i}$ of the other players
there is a pure stationary strategy profile $x^{-i} \in \Sigmamistat$ such that
\[ {\widehat \gamma}_\lambda^i(s_0,x^i,\sigma^{-i};\calD^i,c^i) \geq {\widehat \gamma}_\lambda^i(s_0,x^i,x^{-i};\calD^i,c^i). \]
\end{lemma}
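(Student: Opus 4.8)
The plan is to fix $x^i$, view the coalition $I\setminus\{i\}$ as a single decision maker facing a Markov decision process, and exploit the fact that player~$i$'s modified payoff is then a \emph{concave} function of the coalition's $\lambda$-discounted occupation measure: a concave function on a polytope is minimized at a vertex, and here the vertices are realized by pure stationary profiles, which is exactly the conclusion we want.

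More precisely, I would first set up the MDP picture. Fixing the stationary strategy $x^i$, regard $-i$ as one player whose action at state $s$ is a profile $a^{-i}\in A^{-i}(s):=\times_{j\neq i}A^j(s)$ and whose transition from $s$ under $a^{-i}$ is $q\bigl(\cdot\mid s,(x^i(s),a^{-i})\bigr)$. For a strategy $\sigma^{-i}$ let $\nu_{\sigma^{-i}}$ be the $\lambda$-discounted state–action occupation measure of this MDP over pairs $(s,a^{-i})$, started from $s_0$. Since $x^i$ is stationary, $t_\lambda(s_0,(x^i,\sigma^{-i});s,(a^i,a^{-i}))=x^i(s;a^i)\,\nu_{\sigma^{-i}}(s,a^{-i})$, so the state–action discounted time vector $t_\lambda(s_0,(x^i,\sigma^{-i}))$ is a fixed linear image of $\nu_{\sigma^{-i}}$. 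By Eq.~(\ref{equ:105}) and Eq.~(\ref{equ:payoff:2}), ${\widehat\gamma}^i_\lambda(s_0,x^i,\sigma^{-i};\calD^i,c^i)$ depends on $\sigma^{-i}$ only through $\nu_{\sigma^{-i}}$; write $f(\nu)$ for the resulting function, so that $f(\nu)=\sum_{D\in\calD^i}\min\{L_D(\nu),M_D(\nu)\}$ with $L_D(\nu)=\sum_{s\in D,a^{-i}}\nu(s,a^{-i})\sum_{a^i}x^i(s;a^i)u^i(s,a^i,a^{-i})$ and $M_D(\nu)=c^i(D)\sum_{s\in D,a^{-i}}\nu(s,a^{-i})$ both linear in $\nu$. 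Hence $f$ is a finite sum of pointwise minima of pairs of linear functions, and in particular $f$ is concave.

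Next I would identify the range $\calN:=\{\nu_{\sigma^{-i}}:\sigma^{-i}\in\Sigma^{-i}\}$ of achievable occupation measures. By Theorem~\ref{proposition:stationary:mdp} applied to the coalition $-i$, $\calN$ equals the set of occupation measures of \emph{stationary} profiles $x^{-i}\in\Sigmamistat$; and by standard Markov decision / linear-programming theory (see Altman, 1999), this set is the bounded polyhedron of nonnegative $\nu$ satisfying the $\lambda$-discounted balance equations $\sum_{a^{-i}}\nu(s,a^{-i})=(1-\lambda)\mathbf 1_{\{s=s_0\}}+\lambda\sum_{s',a^{-i}}q\bigl(s\mid s',(x^i(s'),a^{-i})\bigr)\nu(s',a^{-i})$ for every $s\in S$, and its extreme points are precisely the occupation measures of \emph{pure} stationary profiles $x^{-i}$. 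Thus $\calN=\conv\{\nu_{x^{-i}}:x^{-i}\in\Sigmamistat\text{ pure}\}$.

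Finally, given $\sigma^{-i}$ as in the statement, write $\nu_{\sigma^{-i}}=\sum_k\alpha_k\,\nu_{x^{-i}_k}$ as a convex combination of occupation measures of pure stationary profiles $x^{-i}_k$. By concavity of $f$, ${\widehat\gamma}^i_\lambda(s_0,x^i,\sigma^{-i};\calD^i,c^i)=f(\nu_{\sigma^{-i}})\ge\sum_k\alpha_k f(\nu_{x^{-i}_k})\ge\min_k f(\nu_{x^{-i}_k})=\min_k{\widehat\gamma}^i_\lambda(s_0,x^i,x^{-i}_k;\calD^i,c^i)$, and taking $x^{-i}$ to be the $x^{-i}_{k_0}$ attaining this minimum proves the lemma. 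The step I expect to require the most care is the polytope/extreme-point claim — that the set of $\lambda$-discounted occupation measures of the coalition is exactly the feasible set of the discounted balance LP and that its vertices come from deterministic stationary policies. This is classical; since the excerpt only quotes the existence part (Theorem~\ref{proposition:stationary:mdp}) while convexity of $\calN$ is already implicit in the construction used in Lemma~\ref{lemma:monotone}, I would either cite Altman (1999) for the vertex characterization or insert the short linear-programming argument. The concavity of $f$ and the concluding convex-combination inequality are routine.
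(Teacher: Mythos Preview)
Your proposal is correct and follows essentially the same route as the paper: reduce to an MDP for the coalition $-i$, use Altman (1999) (the paper cites Theorem~3.2) to write the occupation measure of $\sigma^{-i}$ as a convex combination of occupation measures of pure stationary profiles, and then exploit that $\widehat\gamma^i_\lambda$ is a sum of minima of linear functionals of this measure. The only cosmetic difference is that you phrase the last step as ``a concave function on a polytope attains its minimum at a vertex,'' whereas the paper unpacks this explicitly via the superadditivity inequality $\min\{a_1,b_1\}+\min\{a_2,b_2\}\le\min\{a_1+a_2,b_1+b_2\}$ (Eq.~(\ref{equ:min})); these are the same argument.
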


\begin{proof}
When player~$i$ plays a stationary strategy and the other players are conceived as a single decision maker who can correlate its $|I|-1$ actions,
the decision problem of players~$I \setminus \{i\}$ becomes a Markov decision problem.
By Altman (1999, Theorem 3.2)
there are $L \in \dN$,
a probability distribution $\beta \in \Delta(\{1,2,\ldots,L\})$,
and a collection of pure stationary strategy profiles $(x^{-i}_l)_{l=1}^L$ of all players other that player~$i$ such that
\[ t_\lambda(s_0,x^i,\sigma^{-i}) = \sum_{l=1}^L \beta_l t_\lambda(s_0,x^{i},x^{-i}_l). \]
By Eq.~(\ref{equ:105}), for every element $D \in \calD^i$,
\begin{eqnarray*}
U^i_\lambda(s_0,x^i,\sigma^{-i};D) &=&
\sum_{s \in D, a \in A(s)}t_\lambda(s_0,x^i,\sigma^{-i};s,a)u^i(s,a)\\
&=&
\sum_{s \in D, a \in A(s)}\sum_{l=1}^L \beta_l t_\lambda(s_0,x^i,x^{-i}_l)u^i(s,a)\\
&=& \sum_{l=1}^L \beta_l U^i_\lambda(s_0,x^i,x_l^{-i};D).
\end{eqnarray*}
By Eq.~(\ref{equ:min})
we have
\begin{eqnarray*}
\widehat\gamma_\lambda^i(s_0,x^i,\sigma^{-i};\calD^i,c^i)
&=&
\sum_{D \in \calD^i} \min\left\{U^i_\lambda(s_0,x^i,\sigma^{-i};D), t_\lambda(s_0,x^i,\sigma^{-i};D)\cdot  c^i(D)\right\}\\
&=&
\sum_{D \in \calD^i} \min\left\{\sum_{l=1}^L \beta_l U^i_\lambda(s_0,x^i,x_l^{-i};D), \sum_{l=1}^L \beta_l t_\lambda(s_0,x^i,x_l^{-i};D)\cdot  c^i(D)\right\}\\
&\geq&
\sum_{D \in \calD^i} \sum_{l=1}^L \beta_l \min\left\{U^i_\lambda(s_0,x^i,x_l^{-i};D), t_\lambda(s_0,x^i,x_l^{-i};D)\cdot  c^i(D)\right\}\\
&=&
\sum_{l=1}^L \beta_l \sum_{D \in \calD^i} \min\left\{U^i_\lambda(s_0,x^i,x_l^{-i};D), t_\lambda(s_0,x^i,x_l^{-i};D)\cdot  c^i(D)\right\}\\
&=&
\sum_{l=1}^L \beta_l \widehat\gamma_\lambda^i(s_0,x^i,x_l^{-i};\calD^i,c^i)\\
&\geq& \min_{l=1,\cdots,L} \widehat\gamma_\lambda^i(s_0,x^i,x_l^{-i};\calD^i,c^i),
\end{eqnarray*}
and the desired result follows.
\end{proof}

\bigskip

We can now prove that the max-min value in stationary strategies in the modified game is not larger than the max-min value in the modified game.
\begin{theorem}
\label{theorem:maxmin:comparison}
For every discount factor $\lambda \in [0,1)$,
every initial state $s_0 \in S$,
every player $i\in I$,
every partition $\calD^i$ of the set of states,
and every vector $c^i \in \dR^{\calD^i}$,
\begin{equation}
\label{equ:102}
\underline{\widehat v}^i_{\lambda,\hbox{stat}}(s_0;\calD^i,c^i)
\leq
\widehat {\underline v}^i_\lambda(s_0;\calD^i,c^i).
\end{equation}
\end{theorem}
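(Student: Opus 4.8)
The plan is to reduce inequality~(\ref{equ:102}) to Theorem~\ref{theorem:maxmin:1}, which already disposes of the only delicate point, namely that a fixed \emph{stationary} strategy of player~$i$ fares no worse against arbitrary behavior strategy profiles of the opponents than against their stationary profiles. Granting that, the desired inequality comes for free from the trivial set inclusion $\Sigmaistatft \subseteq \Sigma^i$.

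First I would record that all the extrema in sight are attained: by Lemma~\ref{lemma:continuous} the map $\sigma \mapsto \widehat\gamma^i_\lambda(s_0,\sigma;\calD^i,c^i)$ is continuous, and the strategy spaces are compact in the weak-$*$ topology, so for every $\sigma^i \in \Sigma^i$ the quantity
\[ \phi(\sigma^i) := \min_{\sigma^{-i} \in \Sigma^{-i}} \widehat\gamma^i_\lambda(s_0,\sigma^i,\sigma^{-i};\calD^i,c^i) \]
is well defined, and its maxima over $\Sigmaistatft$ and over $\Sigma^i$ are both attained; by definition $\widehat{\underline v}^i_\lambda(s_0;\calD^i,c^i) = \max_{\sigma^i \in \Sigma^i} \phi(\sigma^i)$.

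Then I would apply Theorem~\ref{theorem:maxmin:1} to rewrite the left-hand side of~(\ref{equ:102}) as
\[ \underline{\widehat v}^i_{\lambda,\hbox{stat}}(s_0;\calD^i,c^i) = \max_{x^i \in \Sigmaistatft} \min_{\sigma^{-i} \in \Sigma^{-i}} \widehat\gamma^i_\lambda(s_0,x^i,\sigma^{-i};\calD^i,c^i) = \max_{x^i \in \Sigmaistatft} \phi(x^i). \]
Since $\Sigmaistatft \subseteq \Sigma^i$, maximizing the single fixed function $\phi$ over the smaller set cannot exceed maximizing it over the full set, so $\max_{x^i \in \Sigmaistatft} \phi(x^i) \le \max_{\sigma^i \in \Sigma^i} \phi(\sigma^i) = \widehat{\underline v}^i_\lambda(s_0;\calD^i,c^i)$, which is exactly~(\ref{equ:102}).

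The only place where anything genuinely has to be proved is that first equality, i.e.\ Theorem~\ref{theorem:maxmin:1}; and that in turn rests on Lemma~\ref{lemma:maxmin:1} — fixing $x^i$ turns the opponents' decision problem into an MDP, whose discounted occupation measures are convex combinations of those induced by pure stationary profiles — together with the concavity of $\min$ recorded in Eq.~(\ref{equ:min}). Both of those are already available in the excerpt, so I anticipate no further obstacle for the present statement: once Theorem~\ref{theorem:maxmin:1} is in hand, the theorem is immediate from $\Sigmaistatft \subseteq \Sigma^i$.
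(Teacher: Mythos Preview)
Your proposal is correct and matches the paper's argument essentially step for step: the paper also writes the chain
\[
\widehat{\underline v}^i_\lambda = \max_{\sigma^i}\min_{\sigma^{-i}} \;\geq\; \max_{x^i \in \Sigmaistatft}\min_{\sigma^{-i}} \;=\; \max_{x^i \in \Sigmaistatft}\min_{x^{-i} \in \Sigmamistatft} \;=\; \underline{\widehat v}^i_{\lambda,\hbox{stat}},
\]
citing Lemma~\ref{lemma:maxmin:1} for the last equality, where you equivalently cite its repackaging as Theorem~\ref{theorem:maxmin:1}.
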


\begin{proof}
The claim follows from the following chain of equalities and inequality, that holds due to Lemma~\ref{lemma:maxmin:1}.
\begin{eqnarray*}
\widehat{\underline{v}}^i_\lambda(s_0;\calD^i,c^i)
&=& \max_{\sigma^i \in \Sigma^i}\min_{\sigma^{-i} \in \Sigma^{-i}} \gamma_\lambda^i(s_0;\sigma^i,\sigma^{-i})\\
&\geq& \max_{x^i \in \Sigmaistatft}\min_{\sigma^{-i} \in \Sigma^{-i}} \gamma_\lambda^i(s_0;x^i,\sigma^{-i})\\
&=& \max_{x^i \in \Sigmaistatft}\min_{x^{-i} \in \Sigmamistatft} \gamma_\lambda^i(s_0;x^i,\sigma^{-i})\\
&=& \widehat{\underline{v}}^i_{\lambda,\hbox{stat}}(s_0;\calD^i,c^i).
\end{eqnarray*}
\end{proof}

\bigskip

The following example shows that the inequality in Theorem~\ref{theorem:maxmin:comparison} can be strict
and not vanish as the discount factor goes to 1.

\begin{example}[The Big Match]
\label{example:big match}
The two-player zero-sum absorbing game that appears in Figure \arabic{figurecounter}
is a slight variation of the Big Match,
which was introduced by Gillette (1956) and analyzed by Blackwell and Ferguson (1968).

\centerline{
\begin{picture}(130,85)(-10,-20)
\put(-10, 8){$B$}
\put(-10,28){$T$}
\put(30,50){$L$}
\put(90,50){$R$}
\put(60,-20){$s^0$}
\put(60,20){\numbercellongg{$1,0$}{$^{s^1}$}}
\put(0,20){\numbercellongg{$0,1$}{$^{s^2}$}}
\put(60,0){\numbercellongg{$1,0$}{$^{s^0}$}}
\put( 0,0){\numbercellongg{$0,1$}{$^{s^0}$}}
\end{picture}
\ \ \ \ \
\begin{picture}(80,55)(-10,-20)
\put(-10, 8){$B$}
\put(30,30){$L$}
\put(30,-20){$s^1$}
\put( 0,0){\numbercellongg{$0,1$}{$^{s^1}$}}
\end{picture}
\ \ \ \ \
\begin{picture}(80,55)(-10,-20)
\put(-10, 8){$B$}
\put(30,30){$L$}
\put(30,-20){$s^2$}
\put( 0,0){\numbercellongg{$1,0$}{$^{s^2}$}}
\end{picture}
}
\centerline{Figure \arabic{figurecounter}: The game in Example~\ref{example:big match}.}
\addtocounter{figurecounter}{1}
\bigskip

Let $\calD^1$ be the partition of $S$ in which each state forms a distinct element:
\[ \calD^1 := \bigl\{ \{s^0\}, \{s^1\}, \{s^2\} \bigr\}. \]
Let $c^1$ be the vector composed of the uniform value of Player~1 in the various states:
\[ c^1(\{s^0\}) := \tfrac{1}{2}, \ \ \ c^1(\{s^1\}) := 0, \ \ \ c^1(\{s^2\}) := 1. \]
By Blackwell and Ferguson (1968) and Theorem~\ref{theorem:5},
\[ \widehat {\underline v}^1_1(s_0;\calD^1,c^1) = \underline v^1_1(s_0) = \tfrac{1}{2}. \]

We will show that the inequality in Theorem~\ref{theorem:maxmin:comparison} can be strict by showing that for every discount factor $\lambda \in [0,1)$
we have
\[ \widehat{\underline v}^1_{\lambda,stat}(s^0;\calD^1,c^1) =
\tfrac{1}{3}. \]
Any stationary strategy $[p(T),(1-p)(B)]$ of Player~1 determines the total $\lambda$-discounted time $\alpha$ that the play spends in $s^0$ before it is absorbed.
This $\lambda$-discounted time is denoted by $\alpha = \alpha_\lambda(p)$ and is given by
\begin{eqnarray}
\nonumber
\alpha_\lambda(p) &=& (1-\lambda) + \lambda(1-\lambda)(1-p) + \lambda^2(1-\lambda)(1-p)^2 + \cdots\\
&=& \frac{1-\lambda}{1-\lambda(1-p)}.
\end{eqnarray}
\label{equ:alpha}
The function $p \mapsto \alpha_\lambda(p)$ given in Eq.~(\ref{equ:alpha}) is a monotone decreasing function from $[0,1]$ to $[1-\lambda,1]$ and onto.
It follows that
\begin{eqnarray}
\nonumber
\widehat{\underline v}^1_{\lambda,stat}(s^0;\calD^1,c^1)
&=&
\max_{x^1 \in \Sigma^1_{\tiny{stat}}} \min_{x^{2} \in \Sigma^2_{\tiny{stat}}} \widehat \gamma_\lambda^1(s_0,x^1,x^{2};\calD^1,c^1)\\
&=& \max_{\alpha \in [1-\lambda,1]} \min_{y \in [0,1]} \left( \alpha \cdot \min\{1-y,\tfrac{1}{2}\} + (1-\alpha)y \right)
\label{equ:591}\\
&=& \tfrac{1}{3},
\nonumber
\end{eqnarray}
and the maximum in Eq.~(\ref{equ:591}) is attained at $\alpha=\tfrac{2}{3}$.
\end{example}

\section{Application: Uniform Equilibrium in Strongly Controllable Games}
\label{section:application}

In this section we show that the modified game can be used to prove the existence of a uniform $\ep$-equilibrium in a certain
class of multiplayer stochastic games.
We start by describing the class of games that we study.
Call a set of states $D$ \emph{closed} if the play cannot leave $D$, whatever the players play.
Call the set $D$ \emph{strongly controllable} if the play can leave $D$ only when the play is in a specific state $s_D \in D$,
and a specific player $i_D$ plays a specific action $\widetilde a^{i_D}$;
in all other states in $D$ the play cannot leave $D$,
and when the play is in state $s_D$ and player $i_D$ plays an action that is not $\widetilde a^{i_D}$, the play cannot leave $D$.

Let $\calD$ be the partition of the set of states according to the vector of uniform min-max values of the players:
two states are in the same element of the partition if all players have the same uniform min-max values in the two states.
Consider the following refinement $\calD^*$ of $\calD$:
two states $s$ and $s'$ are in the same element of $\calD^*$ if
(a) they lie in the same element of $\calD$,
(b) there is a strategy profile that ensures that the play reaches $s'$ without leaving $D$ when the initial state is $s$,
and (c) there is a strategy profile that ensure that the play reaches $s$ without leaving $D$ when the initial state is $s'$.
We call a stochastic game \emph{strongly controllable} if each element of the partition $\calD^*$ is either closed or strongly controllable.

We will prove that every strongly controllable multiplayer stochastic game admits a uniform equilibrium payoff.
Admittedly, this class of games is restricted,
yet the proof that is described below and shows that games in this class admit a uniform $\ep$-equilibrium,
which involves the modified game,
is the first to achieve this task.
An alternative way to prove the existence of a uniform $\ep$-equilibrium in this class of games uses the technique of Solan and Vieille (2002, Section~4).

Strongly controllable games are by no means significant in their own right.
The goal of this section is to show a proof of concept:
the modified game can be used to prove the existence of a uniform equilibrium payoff in some nontrivial class of multiplayer stochastic games.
We hope that in the future more convincing applications to this tool will be found.

\begin{definition}
Let $\ep \geq 0$.
A strategy profile $\sigma$ is a \emph{uniform $\ep$-equilibrium}
if for every initial state $s_0 \in S$,
every player $i \in I$,
every integer $N \in \dN$ sufficiently large,
and every discount factor $\lambda \in [0,1)$ sufficiently close to 1,
\[ \gamma^i_\lambda(s_0,{\sigma'}^i,\sigma^{-i}) \leq \gamma^i_\lambda(s_0,\sigma) + \ep, \ \ \ \forall {\sigma'}^i \in \Sigma^i, \]
and
\[ \gamma^i_N(s_0,{\sigma'}^i,\sigma^{-i}) \leq \gamma^i_N(s_0,\sigma) + \ep, \ \ \ \forall {\sigma'}^i \in \Sigma^i. \]
\end{definition}

The existence of a uniform $\ep$-equilibrium has been verified for various classes of stochastic games
(see, e.g., Mertens and Neyman (1981), Solan (1999), Vieille (2000a,b), Solan and Vieille (2001),
Flesch, Thuijsman, and Vrieze (2007), Simon (2007, 2012), and Flesch, Schoenmakers, and Vrieze (2008, 2009)).
In this section we concentrate on the class of strongly controllable stochastic games,
which we define now formally.

\begin{definition}
\label{def:controllable:set}
A set of states $D \subseteq S$ is \emph{closed} if $q(D \mid s,a) = 1$ for every state $s \in D$ and every action profile $a \in A(s)$.
A set of states $D \subseteq S$ is \emph{strongly controllable} if the following condition holds:
there is a player $i_D \in I$, a state $s_D \in D$,
and an action $\widetilde a^{i_D} \in A^{i_D}(s_D)$ such that for every state $s \in D$ and every action profile $a \in A(s)$,
if $q(D \mid s,a) < 1$
then $s = s_D$ and $a^{i_D} = \widetilde a^{i_D}$.
\end{definition}

Let $\calD$ be the partition of the set of states according to the uniform min-max value vector:
two states $s,s' \in S$ are in the same element of $\calD$ if and only if $\overline v^i_1(s) =\overline v^i_1(s')$ for every player $i \in I$.

For every set $C \subseteq S$
denote by $\nu_C$ the first arrival time to $C$:
\[ \nu_C := \min\{ n \geq 0 \colon s_n \in C\}. \]
By convention, the minimum of an empty set is $+\infty$.

\begin{definition}
Let $D \subseteq S$ be a set of states and let $s,s' \in D$.
We say that \emph{state $s$ leads in $D$ to state $s'$} if there is a strategy profile $\sigma$ such that
under $\sigma$ the play reaches $s'$ before leaving $D$ when the initial state is $s$:
\begin{equation}
\label{equ:leadto}
\prob_{s,\sigma}(\nu_{\{s'\}} < \nu_{S \setminus D}) = 1.
\end{equation}
The states $s$ and $s'$ are \emph{$D$-siblings} if each one leads in $D$ to the other.
\end{definition}

If there is a strategy profile $\sigma$ that satisfies Eq.~(\ref{equ:leadto}),
then there is a pure stationary strategy profile that satisfies this equation,
and, moreover, this strategy profile is independent of $s$, as soon as $s\in D$.
By definition, every state leads to itself in any set $D$ that contains it.
As a result, the $D$-siblings relation is reflexive, commutative, and transitive.
The following result states that if the states $s$ and $s'$ are $D$-siblings,
and if $s''$ is a state that is visited with positive probability under the strategy that leads in $D$ from $s$ to $s'$,
then the states $s'$ and $s''$ are $D$-siblings as well.

\begin{lemma}
\label{lemma:siblings}
Let $D \subseteq S$ be a set of states, let $s,s' \in D$,
and let $\sigma$ be a strategy profile that satisfies Eq.~(\ref{equ:leadto}).
Let $s'' \in D$ be a state that satisfies $\prob_{s,\sigma}(\nu_{\{s''\}} < \nu_{\{s'\}}) > 0$.
If $s$ and $s'$ are $D$-siblings, then $s'$ and $s''$ are $D$-siblings.
\end{lemma}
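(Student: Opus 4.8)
The plan is to establish separately the two halves of ``$s'$ and $s''$ are $D$-siblings'': that $s''$ leads in $D$ to $s'$, and that $s'$ leads in $D$ to $s''$. First I would record the trivial point $s'' \neq s'$, since otherwise $\prob_{s,\sigma}(\nu_{\{s''\}} < \nu_{\{s'\}}) = 0$, contradicting the hypothesis. For the direction ``$s''$ leads in $D$ to $s'$'': since $\prob_{s,\sigma}(\nu_{\{s'\}} < \nu_{S\setminus D}) = 1$, the positive-probability event $\{\nu_{\{s''\}} < \nu_{\{s'\}}\}$ is, up to a null set, contained in $\{\nu_{\{s''\}} < \nu_{\{s'\}} < \nu_{S\setminus D}\}$; decomposing the latter event into the cylinders determined by the finite history up to the first visit to $s''$ yields a finite history $h$ of positive $\prob_{s,\sigma}$-probability that ends at $s''$, never visits $s'$, and stays in $D$. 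Letting $\sigma_h$ denote the continuation of $\sigma$ after $h$ and conditioning the identity $\prob_{s,\sigma}(\nu_{\{s'\}} < \nu_{S\setminus D}) = 1$ on the cylinder of $h$, one gets $\prob_{s'',\sigma_h}(\nu_{\{s'\}} < \nu_{S\setminus D}) = 1$, so $\sigma_h$ witnesses that $s''$ leads in $D$ to $s'$.

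For the direction ``$s'$ leads in $D$ to $s''$'' I would use that, $s$ and $s'$ being $D$-siblings, there is a (pure stationary) strategy profile $\tau$ with $\prob_{s',\tau}(\nu_{\{s\}} < \nu_{S\setminus D}) = 1$. Put $p := \prob_{s,\sigma}(\nu_{\{s''\}} < \nu_{\{s'\}}) > 0$ and build a strategy profile $\rho$ starting at $s'$ that runs in rounds: each round first plays $\tau$ until the play returns to $s$ (finitely many stages almost surely, never leaving $D$), then plays $\sigma$ afresh from $s$; if this reaches $s''$ before $s'$ (probability $p$, and then also before leaving $D$, since $\nu_{\{s'\}} < \nu_{S\setminus D}$ almost surely under $\sigma$), the construction halts, and otherwise the play is again at $s'$ inside $D$ and the next round begins. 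Each round halts with probability $p$ independently of the past, so the probability that $\rho$ never reaches $s''$ is at most $\lim_{n\to\infty}(1-p)^n = 0$, and up to halting the play never leaves $D$; hence $\prob_{s',\rho}(\nu_{\{s''\}} < \nu_{S\setminus D}) = 1$, i.e.\ $s'$ leads in $D$ to $s''$. Together with the previous step this shows $s'$ and $s''$ are $D$-siblings.

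The main obstacle — a mild one — is this second step: a single run of $\sigma$ from $s$ reaches $s''$ only with positive probability, so one cannot simply concatenate a path $s' \to s$ with a path $s \to s''$ to get a probability-one path $s' \to s''$; the $D$-siblings assumption is exactly what lets the play be driven back to $s$ repeatedly, so that independent attempts accumulate to probability one. The remaining care is purely bookkeeping: that ``reaching $s''$ before $s'$'' under $\sigma$ forces the play to stay in $D$ until then (because $\nu_{\{s'\}} < \nu_{S\setminus D}$ almost surely), and that the never-halting event in the construction of $\rho$ is null.
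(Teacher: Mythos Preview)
Your proof is correct and follows essentially the same approach as the paper's: for $s''$ leading to $s'$ in $D$ you use the continuation of $\sigma$ after its first visit to $s''$, and for $s'$ leading to $s''$ in $D$ you alternate a strategy taking $s'$ back to $s$ with fresh runs of $\sigma$, using independent trials each succeeding with probability $p>0$. Your write-up is somewhat more detailed than the paper's (e.g.\ the explicit extraction of the finite history $h$ and the care taken that the play stays in $D$ during each round), but the underlying argument is the same.
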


\begin{proof}
The part of $\sigma$ from its first visit to $s''$ until the play reaches $s'$ is a strategy profile that leads from $s''$ to $s'$
without leaving $D$.

We now prove that there is a strategy profile that ensures that the play reaches $s''$ without leaving $D$ when the initial state is $s'$.
Let $\sigma'$ be a strategy profile that ensures that the play reaches state $s$ without leaving $D$ when the initial state is $s'$.
Let $\sigma''$ be the strategy profile in which the players alternately follow $\sigma'$ until the play reaches $s$
and then they follow $\sigma$ until the play reaches $s'$.
By definition, under $\sigma''$ the play never leaves $D$, that is, $\prob_{s',\sigma''}(\nu_{S \setminus D}=\infty) = 1$.
We claim that under $\sigma''$ the play reaches $s''$ with probability 1, that is, $\prob_{s',\sigma''}(\nu_{\{s''\}} < \infty) = 1$.
Indeed, after each visit to $s$, with probability $\prob_{s,\sigma}(\nu_{\{s''\}} < \nu_{\{s'\}})$ the play reaches state $s''$ before reaching state $s'$,
hence the play eventually reaches state $s''$ a.s.
\end{proof}

\bigskip

Two states $s,s'$ are $\calD$-siblings if they are $D$-siblings for some element $D \in \calD$.
Denote by $\calD^*$ the partition of the set of states according to the $\calD$-siblings equivalence relation.
This partition is a refinement of the partition $\calD$.
Lemma~\ref{lemma:siblings} implies that if the states $s$ and $s'$ lie in the same element of the partition $\calD^*$,
then state $s$ leads to state $s'$ without leaving the element of $\calD^*$ that contains both states $s$ and $s'$.

\begin{definition}
\label{def:strongly}
A stochastic game is \emph{strongly controllable} if all
elements $D \in \calD^*$ are either closed or strongly controllable.
\end{definition}

Closed sets of states resemble repeated games, since the play cannot leave such a set,
can move between the states, and the min-max value is independent of the state.
Consequently, whenever the initial state is in a closed set of states,
a uniform $\ep$-equilibrium exists.
In fact, in this case a folk theorem obtains,
and every feasible and individually rational payoff vector is a uniform equilibrium payoff,
namely a limit as $\ep$ goes to 0 of payoff vectors that correspond to uniform $\ep$-equilibria.
The main result of this section is the following.

\begin{theorem}
\label{theorem:app:1}
Every strongly controllable stochastic game $\Gamma$ admits a uniform $\ep$-equilibrium, for every $\ep > 0$.
\end{theorem}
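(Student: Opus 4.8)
The plan is to decompose $\Gamma$ along the partition $\calD^*$, to treat each piece with the modified game, and to glue the resulting stationary equilibria into a uniform $\ep$-equilibrium by adding statistical tests and min-max punishments; the structure isolated in Example~\ref{example:property:p:2} is what makes the modified game usable on each piece. Form the directed graph whose vertices are the elements of $\calD^*$, with an arrow $D\to D'$ whenever $D'$ is reachable from $D$ through controllable transitions (recall that, $D$ being closed or strongly controllable, the play can leave $D$ only at its controlling state $s_D$ via the action $\widetilde a^{i_D}$ of player $i_D$, so every change of $\calD^*$-element follows such an arrow). Process the elements in an order respecting the condensation of this graph, bottom strongly connected components first. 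A bottom strongly connected component is a closed set of states on which the play can be steered arbitrarily, hence (as is standard) one on which the uniform min-max vector is constant, so such a component is a single closed element of $\calD^*$; there the game is essentially a repeated game and a uniform $\ep$-equilibrium — indeed a folk theorem — is classical. Having fixed uniform $\ep$-equilibrium payoffs on all already-processed elements, replace those elements by absorbing states carrying the corresponding payoff vectors; in the game that remains, the next element (or bottom component of what is left) to be processed is a set $D$ such that every state outside $D$ is absorbing.

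Such a game falls under Example~\ref{example:property:p:2}: with $\calD^i:=\{D,\{s\}_{s\notin D}\}$ for each player $i$ and cut-offs $c^i(D):=\max_{s\in D}\overline v^i_1(s)$ (and $c^i(\{s\})$ above the payoff bound for the absorbing states), this partition satisfies Property~P' with $\zeta=1$. Theorem~\ref{theorem:eq:2} then furnishes, for each $\lambda$, a stationary equilibrium $x_\lambda$ of $\widehat\Gamma_\lambda(s_0;\vec\calD,\vec c)$, Theorem~\ref{theorem:eq:semialgebraic} lets me select these along a semi-algebraic branch converging to some $x^*$ as $\lambda\to1$, and Corollary~\ref{corollary:equilibria:minmax} together with inequality~(\ref{equ:inequality:1}) gives $\gamma^i_\lambda(s_0,x_\lambda)\ge\overline v^i_1(s_0)-\ep$ for every initial state $s_0$ and every $\lambda$ close to~$1$. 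Since the continuation payoffs substituted for the processed elements were themselves at least the corresponding uniform min-max values of $\Gamma$, this lower bound transfers to $\Gamma$: along the path generated by $x_\lambda$ each player receives at least his uniform min-max value, up to $\ep$.

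To promote $x_\lambda$ (for a fixed $\lambda$ close to~$1$, or the profile induced by $x^*$ with vanishing corrections) to a uniform $\ep$-equilibrium, the players follow it on the path while each runs a statistical test comparing the empirical frequency of every other player's actions in every state with $x_\lambda$'s prescription; as soon as player~$i$'s play fails the test, the remaining players switch to a strategy profile that holds $i$ to (essentially) his uniform min-max value, taken $\calD^i_{**}$-subgame-perfect in the sense of Theorem~\ref{theorem:mn1} so that $i$ cannot migrate the play to states of higher min-max value. Because a stationary deviation is detected after a bounded expected number of stages, because payoffs lie in $[-1,1]$, and because the path payoff is at least $\overline v^i_1(s_0)-\ep$, no deviation earns more than $O(\ep)$ over the path payoff, in either the discounted or the $N$-stage game. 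Patching these pieces over the entry states of each region — the classical equilibria on closed elements, the modified-game-based equilibria on the controllable regions, and the punishments — yields a single profile $\sigma$ that is a uniform $\ep$-equilibrium of $\Gamma$.

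The step I expect to be the main obstacle is implementing the exit behaviour that $x^*$ prescribes at a controlling state $s_D$: just as in the Big Match (Example~\ref{example:big match}), the equilibrium exit probability $x^{i_D}_\lambda(s_D;\widetilde a^{i_D})$ may tend to $0$ at a rate comparable to $1-\lambda$ and so has no stationary realization in the undiscounted game. Following Blackwell--Ferguson and Mertens--Neyman, player~$i_D$'s stationary exit rule inside $D$ must be replaced by a history-dependent one that raises the exit probability when the other players' empirical play in $D$ drifts from its prescription, calibrated so that $i_D$'s own payoff still matches his modified-game value while the others stay disciplined. Interlocking this with the statistical tests and the min-max punishments — and in particular checking that a controller who deviates cannot beat his modified-game payoff even though he may force the play into regions where his min-max value is larger, which is precisely what the cut-offs in the modified game are arranged to prevent — is where the real work lies; the remaining estimates are routine bookkeeping built from the results cited above.
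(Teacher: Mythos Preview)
Your overall architecture---decompose along $\calD^*$, apply the modified game of Example~\ref{example:property:p:2} on each piece, invoke Theorems~\ref{theorem:eq:2} and~\ref{theorem:eq:semialgebraic} and Corollary~\ref{corollary:equilibria:minmax}, then glue with tests and punishments---matches the paper's. But you are missing the idea that makes the construction go through, and as a result you manufacture an obstacle that does not actually arise.

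The point of the modified payoff here is not to supply a path strategy $x_\lambda$ to be followed directly; it is to force a \emph{dichotomy}. Writing out $\widehat\gamma^{i}_\lambda(s_D,x_\lambda;\calD_D,c^i_D)$ as in Eq.~(\ref{equ:payoff:4}) and passing to the limit, either (A.1) $\lim_\lambda t_\lambda(s_D,x_\lambda;D)=1$ and the in-$D$ average payoff dominates $\overline v^i_1(s_D)$, or (A.2) $\lim_\lambda t_\lambda(s_D,x_\lambda;D)<1$ and the \emph{expected continuation min-max vector upon exit} dominates $\overline v_1(s_D)$. In case (A.1) one never exits: the uniform equilibrium cycles through the $x_1$-irreducible sets inside $D$. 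In case (A.2) one freezes a discount factor $\lambda_0$, lets the non-controllers play $x_{\lambda_0}^{-i_D}$ at $s_D$, and lets the controller play $\widetilde a^{i_D}$ with a \emph{fixed} probability $\eta$; after $1/\eta^2$ visits to $s_D$ without absorption the others punish $i_D$. No Big-Match machinery is needed: the single-controller structure means $i_D$'s only profitable deviation is to withhold $\widetilde a^{i_D}$, which the visit counter detects, while the others' deviations are statistically detectable because $\eta$ is small. Your proposal to follow $x_\lambda$ on path and then repair the vanishing exit rate with a Mertens--Neyman-style history-dependent rule for $i_D$ is both unnecessary and, as you acknowledge, not carried out; the dichotomy is exactly what the modified game is engineered to deliver and is the step you should supply. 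A secondary point: the paper replaces states outside $D$ by absorbing states with payoff $\overline v_1(s)$ (not with inductively computed equilibrium payoffs), which is what makes Corollary~\ref{corollary:equilibria:minmax} apply cleanly and what makes the submartingale $W^i_n=\overline v^i_1(s_n)+\delta\cdot k(\calD^*;n)$ bound the number of $\calD^*$-switches uniformly; your bottom-up substitution of equilibrium payoffs would require you to redo that bookkeeping, and your claim that a bottom strongly connected component of the $\calD^*$-graph is a single closed element needs justification (the controller can \emph{permit} exit, not force it, so ``the play can be steered arbitrarily'' is not immediate).
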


The rest of the section is devoted to the proof of Theorem~\ref{theorem:app:1},
which uses the approach of Vrieze and Thuijsman (1989) and Solan (1999), and, except of the use of the modified game, is by now standard.
We will therefore provide the main lines of the proof without getting into all the details.

Fix $\ep > 0$ throughout, and for the time being also fix an element $D \in \calD^*$.

\bigskip
\noindent\textbf{Step 1:} Definition of a restricted stochastic game and its modified game.

Consider the stochastic game $\Gamma^D$ that is derived from the stochastic game $\Gamma$ by restricting the set of states to $D$; that is,
\begin{itemize}
\item   The set of players is $I$.
\item   The set of states is $S$; all states $s \not\in D$ are absorbing with absorbing payoff $\overline v_1(s)$.
\item   The set of actions of each player~$i$ in each state $s \in D$ is $A^i(s)$.
\item   Transitions and stage payoff in states in $D$ are as in the original stochastic game.
\end{itemize}

If $D$ is a closed set of states, let $s_D$ be an arbitrary state in $D$;
if $D$ is a strongly controllable set, the state $s_D$ was defined in Definition~\ref{def:controllable:set}.
Define a partition $\calD_D$ over the set of states $S$ by
\[ \calD_D := \{D, \{s\}_{s \not\in D}\}, \]
and for every player $i \in I$ define a vector of cut-offs $(c^i(E))_{E \in \calD_D}$ by
\[ c^i_D(D) := \overline v^i_1(s_D), \ \ \ \ \ c^i_D(\{s\}) := \overline v^i_1(s) \ \ \ \forall s \not\in D. \]
Since the set $D$ contains all nonabsorbing states in the game $\Gamma^D$,
the partition $\calD_D$ satisfies Property P' w.r.t.~every player with $\zeta=1$ (see the paragraph after Definition~\ref{def:property:ptag}).

Consider the modified game $\widehat\Gamma_\lambda^D(s_D;\calD_D,\vec c_D)$ that is based on the restricted stochastic game $\Gamma^D$,
where all players share the same partition $\calD_D$, and the vector of cutoffs is $\vec c := (c^i_D)_{i \in I}$.
For every $\lambda \in [0,1)$ let $x_\lambda$ be a stationary equilibrium in the modified game $\widehat\Gamma_\lambda^D(s_D;\calD_D,\vec c_D)$
such that the function $\lambda \mapsto x_\lambda$ is semi-algebraic (see Theorem~\ref{theorem:eq:semialgebraic}).
In particular the limit $x_1(s) := \lim_{\lambda \to 1} x_\lambda(s)$ exists for every state $s \in S$,
and by Corollary~\ref{corollary:equilibria:minmax} we have
\begin{equation}
\label{equ:lim:payoff}
\lim_{\lambda \to 1} \widehat\gamma^i_\lambda(s_0,x_\lambda;\calD_D,c^i_D) \geq \overline v^i_1(s_D).
\end{equation}

The payoff in the modified game $\widehat\Gamma_\lambda^D(s_D;\calD_D,\vec c_D)$ under the strategy profile $x_\lambda$ is
\begin{eqnarray}
\label{equ:payoff:4}
\widehat\gamma^{D,i}_\lambda(s_D,x_\lambda;\calD_D,\overline v_1^i) &=&
\sum_{E \in \calD} \min\left\{U^i_\lambda(s_D,x_\lambda;E), t_\lambda(s_0,x_\lambda;E)\cdot  \overline c_D^i(E)\right\}\\
&=& \min\left\{U^i_\lambda(s_D,x_\lambda;D), t_\lambda(s_D,x_\lambda;D)\cdot  \overline v_1^i(s_D)\right\}
+ \sum_{s \not\in D} t_\lambda(s_D,x_\lambda;\{s\}) \cdot \overline v_1^i(s).
\nonumber
\end{eqnarray}
By taking the limit as $\lambda$ goes to 1 in Eq.~(\ref{equ:payoff:4}) and using Eq.~(\ref{equ:lim:payoff}) we deduce that
exactly one of the following alternatives hold:
\begin{enumerate}
\item[(A.1)] $\lim_{\lambda \to 1} t_\lambda(s_D,x_\lambda;D) = 1$ and $\lim_{\lambda \to 1} U^i_\lambda(s_D,x_\lambda;D) \geq \overline v^i_1(s_D)$
for every player $i \in I$.
\item[(A.2)] $\lim_{\lambda \to 1} t_\lambda(s_D,x_\lambda;D) < 1$ and
$\sum_{s \not\in D} t_\lambda(s_D,x_\lambda;\{s\}) \cdot \overline v_1^i(s) \geq \overline v^i_1(s_D)$ for every player $i \in I$.
\end{enumerate}
The significance of the modified game is in implying this dichotomy.
We will show that if for $D \in \calD^*$ Condition (A.1) holds,
then when the initial state is in $D$ there is a uniform $\ep$-equilibrium under which the play never leaves $D$,
while if Condition (A.2) holds,
then when the initial state is in $D$ there is a uniform $\ep$-equilibrium under which the play leaves $D$ with probability 1.

\bigskip
\noindent\textbf{Step 2:} Condition (A.1) holds.

The condition $\lim_{\lambda \to 1} t_\lambda(s_D,\sigma;D) = 1$ arises in two cases:
\begin{itemize}
\item   The set $D$ is closed, and in particular the play cannot leave it.
\item   The set $D$ is strongly controllable, and as $\lambda$ goes to 1 the $\lambda$-discounted time at which the play leaves $D$ under $x_\lambda$ goes to 0:
$\lim_{\lambda \to 1}\E_{s_0,x_\lambda}[(1-\lambda)\lambda^{\nu_{S \setminus D}}] = 0$.
\end{itemize}
Both cases are treated in the same way.

Since all states in the set $D$ are $D$-siblings (Lemma~\ref{lemma:siblings}),
for every state $s' \in D$ there is a stationary strategy profile
that ensures that the play reaches state $s'$ without leaving $D$ when the initial state is in $D$.
If $t_\lambda(s_D,\sigma;\{s'\}) > 0$ for every $\lambda \in [0,1)$,
that is, the $\lambda$-discounted time spent in state $s'$ is positive 0,
then there is a stationary strategy profile
that is a perturbation of $x_1$ and ensures that the play reaches $s'$ without leaving $D$ when the initial state is in $D$.
Formally, for every $\delta > 0$ there is a stationary strategy profile $y_{\delta,s'}$ that satisfies
\begin{itemize}
\item   The strategy profile $y_{\delta,s'}$ is a perturbation of $x_1$, that is, $\|y^i_{\delta,s'}(s) - x_1^i(s)\|_\infty < \delta$
for every player $i \in I$ and every state $s \in D$.
\item   Under the strategy profile $y_{\delta,s'}$ the play reaches state $s'$ with probability 1 without leaving $D$,
provided the initial state is in $D$:
\[ \prob_{s,y_{\delta,s'}}(\nu_{\{s'\}} < \nu_{S \setminus D}) = 1, \ \ \ \forall s \in D. \]
\end{itemize}

Since $\lim_{\lambda \to 1} t_\lambda(s_D,x_\lambda;D) = 1$, Eq.~(\ref{equ:106}) implies that
\[ \lim_{\lambda \to 1}U^i_\lambda(s_D,x_\lambda;D) = \lim_{\lambda \to 1}\gamma^i_\lambda(s_D,x_\lambda), \]
hence Condition (A.1) implies that
\[ \lim_{\lambda \to 1}\gamma^i_\lambda(s_D,x_\lambda) \geq \overline v_1^i(s_D), \ \ \ \forall i\in I. \]
The quantity $\lim_{\lambda \to 1}\gamma_\lambda(s_D,x_\lambda)$ is a convex combination of the payoff under the stationary strategy profile $x_1$
in its irreducible sets.
Recall that a nonempty set $C \subseteq S$ is \emph{irreducible under $x_1$} if
\begin{itemize}
\item[(I.1)] The set $C$ is closed under $x_1$, that is, $q(C \mid s,x_1) = 1$ for every state $s \in C$.
\item[(I.2)] The set $C$ is a minimal set (w.r.t.~set inclusion) that satisfies property (I.1).
\end{itemize}
Denote by $\calI(D;x_1)$ the collection of all irreducible sets under $x_1$ that are contained in $D$.
It is well known that for every irreducible set $C$ under $x_1$
the limit $\lim_{\lambda \to 1} \gamma_\lambda(s_0,x_1)$ is independent of the initial state $s_0$,
as long as $s_0$ is in $C$.
We denote this limit by $\gamma_1(C,x_1)$.
For every irreducible set $C$ under $x_1$ denote the $\lambda$-discounted time that the play spends in the set $C$ under strategy profile $x_\lambda$
when the initial state is $s_D$ by
\[ \beta(C) := \lim_{\lambda \to 1} t_\lambda(s_D,x_\lambda;C). \]
Then
\[ \lim_{\lambda \to 1}\gamma^i_\lambda(s_D,x_\lambda) = \sum_{C \in \calI(D;x_1)} \beta(C) \cdot \gamma_1(C,x_1). \]
Denote $\supp(\beta) = \{C_1,C_2,\cdots,C_L\}$
and for each $l \in \{1,2,\cdots,L\}$ choose a state $s_l \in C_l$.

Consider the following strategy profile $\sigma_K$ that depends on a positive real number $K$:
\begin{enumerate}
\item   Set $l=1$.
\item   The players play the stationary strategy profile $y_{\ep,s_l}$ until the play reaches a state in $C_l$.
\item   The players play the stationary strategy profile $x_1$ for $\lceil \beta(C_l) \cdot K\rceil$ stages.
\item   Increase $l$ by 1 modulo $L$ and go to Step 2.
\end{enumerate}

The reader can verify that as $K$ increases the payoff under $\sigma_K$ converges to $\sum_{C \in \calI(D;x_1)} \beta(C) \cdot \gamma_1(C,x_1)$, that is,
\begin{equation*}
\lim_{K \to \infty} \lim_{\lambda \to 1} \gamma^i_\lambda(s_D,\sigma_K) = \sum_{C \in \calI(D;x_1)} \beta(C) \cdot \gamma_1(C,x_1),\\
\end{equation*}
and
\begin{equation*}
\lim_{K \to \infty} \lim_{N \to \infty} \gamma^i_N(s_D,\sigma_K) = \sum_{C \in \calI(D;x_1)} \beta(C) \cdot \gamma_1(C,x_1).
\end{equation*}
It is standard to verify that by adding statistical tests to $\sigma_K$,
which check whether the players indeed follow it,
the strategy profile $\sigma_K$ can be turned into a uniform $2\ep$-equilibrium, provided the initial state is in $D$.

Choose $K_0 \in \dN$ sufficiently large such that
\[
\left|\lim_{\lambda \to 1} \gamma^i_\lambda(s_D,\sigma_{K_0}) - \sum_{C \in \calI(D;x_1)} \beta(C) \cdot \gamma_1(C,x_1)\right| \leq \ep,
\]
and
\begin{equation*}
\left| \lim_{N \to \infty} \gamma^i_N(s_D,\sigma_{K_0}) - \sum_{C \in \calI(D;x_1)} \beta(C) \cdot \gamma_1(C,x_1)\right| \leq \ep.
\end{equation*}

\bigskip
\noindent\textbf{Step 3:} Condition (A.2) holds.

The condition $\lim_{\lambda \to 1} t_\lambda(s_D,x_\lambda;D) < 1$ implies that the play can leave $D$,
hence in particular the set $D$ is not closed, and therefore it is strongly controllable.
By Condition (A.2),
\[ \sum_{s \not\in D} \lim_{\lambda \to 1} t_\lambda(s_D,x_\lambda;\{s\}) \cdot \overline v_1^i(s) \geq \overline v_1^i(s_D), \ \ \ \forall i \in I, \]
and therefore
\[ \lim_{\lambda \to 1}
 \left(\frac{\sum_{s' \not\in D} q(s' \mid s_D,\widetilde a^{i_D},x^{-i_D}_\lambda) \cdot \overline v_1^i(s')}
 {q(S \setminus D \mid s_D,\widetilde a^{i_D},x^{-i_D}_\lambda)}\right) \geq \overline v_1^i(s_D), \ \ \ \forall i \in I. \]
A strategy profile that ensures that the play leaves the set $D$ and the expected continuation uniform min-max value is high suggests itself.
Fix $\lambda_0 \in [0,1)$ that satisfies
\begin{equation*}
\frac{\sum_{s' \not\in D} q(s' \mid s_D,\widetilde a^{i_D},x^{-i_D}_{\lambda_0}) \cdot \overline v_1^i(s')}
 {q(S \setminus D \mid s_D,\widetilde a^{i_D},x^{-i_D}_{\lambda_0})} \geq \overline v_1^i(s_D)-\delta,
 \ \ \ \forall i \in I,
 \end{equation*}
 where $\delta > 0$ will be determined below.
\begin{itemize}
\item   The players play a pure stationary strategy that guarantees that the play reaches the state $s_D$.
\item   At state $s_D$ players $I \setminus \{i_D\}$ play $x^{-i_D}_{\lambda_0}$, while player $i_D$ plays $(1-\eta) \widehat a^{i_D} + \eta \widetilde a^{i_D}$,
where $\widehat a^{i_D}$ is some action in $A^{i_D}(s_D) \setminus \{\widetilde a^{i_D}\}$,
and $\eta>0$ is sufficiently small.
\item   Deviations in states in $D \setminus \{s_D\}$ are observed immediately and punished at the deviator's uniform min-max level.
\item   Deviations in $s_D$ of players in $I \setminus \{i_D\}$ are observed statistically, provided $\eta$ is sufficiently small.
\item   If the play has not left $D$ after $\tfrac{1}{\eta^2}$ visits to state $s_D$
and no deviation has been detected,
all players start punishing player~$i_D$ at his uniform min-max level.
The constant $\eta$ is chosen sufficiently small so that if no player deviates,
the probability that the play does not leave $D$ after $\tfrac{1}{\eta^2}$ visits to state $s_D$ is smaller than $\delta$.
\end{itemize}

\bigskip
\noindent\textbf{Step 4:} Combining the strategies.

Denote by $\sigma^*$ the strategy profile that plays as follows:
\begin{itemize}
\item   Whenever the play enters an element $D \in \calD^*$ that satisfies $\lim_{\lambda \to 1} t_\lambda(s_D,x_\lambda;D) = 1$,
the strategy profile $\sigma^*$ follows the strategy profile $\sigma_{K_0}$ defined in Step~2.
\item   Whenever the play enters an element $D \in \calD^*$ that satisfies $\lim_{\lambda \to 1} t_\lambda(s_D,x_\lambda;D) < 1$,
the strategy profile $\sigma^*$ follows the strategy profile defined in Step~3 until the play leaves the set $D$.
\end{itemize}

The strategy profiles defined in Step~2 ensures that, when the initial state is in an element $D \in \calD^*$ that satisfies Condition (A.1),
the discounted payoff and the average payoff
of each player~$i$ is at least $\overline v_1^i(s_D)-2\delta$, provided the discount factor is sufficiently close to 1 and the horizon is sufficiently long.
The strategy profile defined in Step~3 ensures that, when the initial state is in an element $D \in \calD^*$ that satisfies Condition (A.2),
the play leaves $D$
and the expected continuation uniform min-max value of each player~$i$ is at least $\overline v_1^i(s_D)-\delta$.
Note that for every player $i \in I$, the strategy $\sigma^{*i}$ is $\calD_D$-Markovian.

Recall that $k(\calD^*;n)$ is the number of times in which the play switched elements of the partition $\calD^*$ up to stage $n$.
For each player $i \in I$ define a stochastic process $(W_n)_{n \geq 0}$ with values in $\dR^I$ by $W^i_n := \overline v^i_1(s_n) + \delta \cdot k(\calD^*;n)$
for each $i \in I$ and every $n \geq 0$.
By the construction of the strategy profile $\sigma^*$,
the process $(W^i_n)_{n \geq 0}$ is a bounded submartingale under the strategy profile $\sigma^*$, for every player $i \in I$.

We argue that the expected number of times in which the play switches between elements of $\calD^*$ is bounded by a constant
that is independent of $\delta$.
Indeed, the proof of Theorem~\ref{theorem:partition2}
implies that the number of times in which the play switches between elements of $\calD$ is bounded by $C(\rho)$.
Moreover, if $s,s'$ are two states in the same elements of $\calD$ but in different elements of $\calD^*$,
then one of them does not lead to the other.
It follows that the expected number of times in which the process changes an element of the partition $\calD^*$
before leaving the current element of the partition $\calD$ is uniformly bounded by a constant $C'$
that depends only on the transition function $q$. The claim follows.

As in the proof of Theorem~\ref{theorem:partition2} we deduce that there is an event $E$ that satisfies the following conditions:
\begin{itemize}
\item   $\prob_{s_0,\sigma^*}(E)  \geq 1-\ep$ for every initial state $s_0$.
\item   On the event $E$ the number of times the process $(W_k)_{k \geq 0}$ changes its values is uniformly bounded, say by $C''$,
which depends on $\rho$ and on the transition function $q$.
\end{itemize}
We choose $\delta$ sufficiently small such that $C'' \delta \leq \ep$.

We deduce that $\prob_{s_0,\sigma^*}$-a.s.~the play reaches a closed element in $\calD^*$,
and therefore the expected long-run average payoff of each player~$i$ under $\sigma^*$ is at least
$\overline v^i_1(s_0) - 3\ep$.
Moreover, for every history $h_n \in H$ the expected long-run average payoff of each player~$i$ under $\sigma^*$,
conditioned that history $h_n$ occurred, is at least
$\overline v^i_1(s_n) - 3\ep$.
This in turn implies that the strategy profile $\sigma^*$ is uniform $4\ep$-equilibrium.

\section{Discussion and Open Problems}

In this paper we defined for every stochastic game an auxiliary game, which we termed the modified game,
studied some of its properties, and showed that equilibria in this auxiliary game can be used to study uniform equilibrium in the original stochastic game.
This approach is similar to the vanishing discount factor approach of Vrieze and Thuijsman (1989),
who studied uniform equilibrium in two-player absorbing games by analyzing an auxiliary game,
which, in their case, was the discounted game.
Solan (1999) took this approach one step further by studying a more involved version of the payoff function in absorbing games,
by altering the payoff function in the nonabsorbing state.
In the present paper we extended Solan's (1999) approach to general stochastic games.
Vieille (2000b), Solan (2000), and Solan and Vieille (2002) studied another type of auxiliary game when the underlying game is a recursive game
by restricting the players to completely mixed strategies.

Unlike Vrieze and Thuijsman (1989), Solan (1999), Vieille (2000b), Solan (2000), and Solan and Vieille (2002), the auxiliary game presented in this paper
is valid for every stochastic game, and not only for absorbing games or for recursive games.
It is probable that our modified game is not the only auxiliary game that can be useful to studying general stochastic games,
and that in the future other types of auxiliary games will be proposed and studied.

Example~\ref{example:1} shows that there need not be a stationary strategy profile $x$ that satisfies Eq.~(\ref{equ:500})
for every player $i \in I$ and every initial state $s_0 \in S$.
We do not know whether for every $\ep > 0$ there exists a stationary strategy profile $x$ that satisfies
\[ \widehat\gamma_\lambda^i(s_0,x;\calD^i,c^i) \geq \overline v^i_1(s_0) - \ep, \ \ \ \forall i\in I, s_0 \in S. \]
The existence of such a stationary strategy profile may have significant implications on the study of acceptability in stochastic games
(Solan, 2016) and on the study of uniform correlated $\ep$-equilibrium in stochastic games (Solan and Vieille, 2002; Solan and Vohra, 2002).

As mentioned before, we do not know whether the functions
$\lambda \mapsto \underline{\widehat v}^i_{\lambda}(s_0;\calD^i,c^i)$
and
$\lambda \mapsto \overline{\widehat v}^i_{\lambda}(s_0;\calD^i,c^i)$
are semi-algebraic for every initial state $s_0 \in S$,
every player $i \in I$,
every partition $\calD^i$ of the set of states,
and every vector $c^i \in \dR^{\calD^i}$.
It is interesting to know whether this is indeed the case,
and if not, whether the limits of these functions as the discount factor goes to 1 nevertheless exist.


\begin{thebibliography}{99}

\bibitem{Altman1999}
Altman E. (1999)
\emph{Constrained Markov Decision Processes}, Chapman and Hall/CRC.

\bibitem{BK}
  {Bewley T. and Kohlberg E. (1976)
   The Asymptotic Theory of Stochastic Games.
  {\it Mathematics of Operations Research},
   {\bf 1}, 197--208.}


\bibitem{Billingsley1995}
Billingsley P. (1995)
\emph{Probability and Measure}, John Wiley \& Sons.

\bibitem {BF}
  {Blackwell D. and Ferguson T. S. (1968)
   The Big Match.
   {\it The Annals of Mathematical Statistics},
   {\bf 39}, 159--163.}



\bibitem{BSS2016}
Buhuvsky L., Solan E., and Solan O.N. (2016)
Monovex Sets, Preprint.

\bibitem{EM1946}
Eilenberg S. and Montgomery D. (1946)
Fixed Point Theorems for Multi-Valued Transformations.
\emph{American Journal of Mathematics}, \textbf{68}, 214--222.

\bibitem{Fink}
  {Fink A.M. (1964)
   Equilibrium in a Stochastic $n$-Person Game,
   {\it Journal of Science of the Hiroshima University}, Series A-I Math, {\bf 28}, 89--93.}

\bibitem{FSV2008}
{Flesch J., Schoenmakers G., and Vrieze K. (2008)
Stochastic Games on a Product State Space,
\emph{Mathematics of Operations Research}, \textbf{33},  403--420.}

\bibitem{FSV2009}
{Flesch J., Schoenmakers G., and Vrieze K. (2009)
Stochastic Games on a Product State Space: the Periodic Case,
\emph{International Journal of Game Theory}, \textbf{38}, 263--289.}

\bibitem{FleThuVri07}
{Flesch J., Thuijsman F. and O.J. Vrieze (1997)
Stochastic Games with Additive Transitions,
\emph{European Journal of Operational Research}, \textbf{179}, 483--497.}

\bibitem{flesch}
Flesch J., Thuijsman F., and Vrieze O.J. (1997)
Cyclic Markov Equilibria in Stochastic Games.
\emph{International Journal of Game Theory}, \textbf{26}, 303--314.

\bibitem{FTV2007}
Flesch J., Thuijsman F., and Vrieze O.J. (2007)
Stochastic Games with Additive Transitions.
\emph{European Jurnal of Operations Research}, \textbf{179}, 483--497.


\bibitem{Gillette}
  {Gillette D. (1957)
   Stochastic Games with Zero Stop Probabilities.
   \emph{Contributions to the Theory of Games}, {\bf 3},
   Princeton University Press}

\bibitem{HSTV}
H\"orner J., Sugaya T., Takahashi S., and Vieille N. (2011)
Recursive Methods in Discounted Stochastic Games: an Algorithm for $\delta \to 1$ and a Folk Theorem.
\emph{Econometrica}, \textbf{79}, 1277--1318.

\bibitem{MN}
  {Mertens J.F. and Neyman A. (1981)
   Stochastic Games,
   {\it International Journal of Game Theory}, {\bf 10}, 53--66.}


\bibitem{MP}
  {Mertens J.F. and T. Parthasarathy (1987)
 Equilibria for Discounted Stochastic Games,
  CORE Discussion Paper No. 8750. Appeared in \emph{Stochastic Games and Applications}, A. Neyman and S. Sorin (Eds.),
  Kluwer Academic Publishers, 131--172.}

\bibitem{N03}
Neyman A. (2003) Real Algebraic Tools in Stochastic games.
In \emph{Stochastic Games and Applications}, Neyman A. and Sorin S. (eds.), Kluwer Academic Publishers, 57--75.

\bibitem{Simon2007}
Simon R.S. (2007) The Structure of Non-Zero-Sum Stochastic Games,
\emph{Advances in Applied Mathematics}, \textbf{38}, 1--26.

\bibitem{Simon2012}
Simon R.S. (2012) A Topological Approach to Quitting Games,
\emph{Mathematics of Operations Research}, \textbf{37}, 180--195.

\bibitem{Solan99}
{Solan E. (1999)
Three-Player Absorbing Games.
\emph{Mathematics of Operations Research},
\textbf{24}, 669--698.}

\bibitem{Solan2000}
Solan E. (2000)
Absorbing Team Games.
\emph{Games and Economic Behavior}, \textbf{31}, 245--261.

\bibitem{Solan2016}
{Solan E. (2016)
Acceptable Strategy Profiles in Stochastic Games.
\emph{Games and Economic Behavior}, Forthcoming.}

\bibitem{SV2001}
Solan E. and Vieille N. (2001)
Quitting Games.
\emph{Mathematics of Operations Research}, \textbf{26}, 265--285.

\bibitem{SV2002}
{Solan E. and Vieille N. (2002)
Correlated Equilibrium in Stochastic Games.
\emph{Games and Economic Behavior}, \textbf{38}, 362--399.}

\bibitem{SVohra2002}
Solan E. and Vohra R. (2002)
Correlated Equilibrium Payoffs and Public Signalling in Absorbing Games,
\emph{International Journal of Game Theory}, \textbf{31}, 91--121.

\bibitem{Takahashi}
{Takahashi M. (1964) Equilibrium Points of Stochastic Non-Cooperative $n$-Person Games,
\emph{Journal of Science of the Hiroshima University}, Series A-I Math,  \textbf{28}, 95--99.}

\bibitem{VieIJM1}
Vieille N. (2000a)
 Two-Player Stochastic Games I: A Reduction.
 {\em Israel Journal of Mathematics}, \textbf{119}, 55--91.

\bibitem{VieIJM3}
Vieille N. (2000b)
 Two-Player Stochastic Games {II}: The Case of Recursive Games.
 {\em Israel Journal of Mathematic}, \textbf{119}, 93--126.

\bibitem{TV}
  {Vrieze O.J. and Thuijsman F. (1989)
   On Equilibria in Repeated Games with Absorbing States.
   \emph{International Journal of Game Theory}, {\bf 18}, 293--310.}

\end{thebibliography}
\end{document}